\newtheorem{theorem}{Theorem}[section]
\newtheorem{lemma}{Lemma}[section]
\newtheorem{definition}{Definition}[section]
\newtheorem{proposition}{Proposition}[section]
\newtheorem{corollary}{Corollary}[section]
\newtheorem{assumption}{Assumption}
\newtheorem{remark}{Remark}[section]
\newenvironment{proof}{{\noindent \bf Proof:}}{\hfill$\Box$\medskip}
\definecolor{lred}{rgb}{1,0.8,0.8}
\definecolor{lblue}{rgb}{0.8,0.8,1}
\definecolor{dred}{rgb}{0.6,0,0}
\definecolor{dblue}{rgb}{0,0,0.5}
\definecolor{dgreen}{rgb}{0,0.5,0.5}
\title{Convergence analysis of inexact MBA method for constrained upper-$\mathcal{C}^2$ optimization problems}
\author{
	Ruyu Liu\footnote{(\href{mailto:maruyuliu@mail.scut.edu.cn}{maruyuliu@mail.scut.edu.cn}) School of Mathematics, South China University of Technology.}\ \ and\ \
	Shaohua Pan\footnote{(\href{mailto:shhpan@scut.edu.cn}{shhpan@scut.edu.cn}) School of Mathematics, South China University of Technology, Guangzhou.}
}
\date{}
\begin{document}

\maketitle

\begin{abstract}
This paper concerns a class of constrained optimization problems in which, the objective and constraint functions are both upper-$\mathcal{C}^2$. For such nonconvex and nonsmooth optimization problems, we develop an inexact moving balls approximation (MBA) method by a workable inexactness criterion for the solving of subproblems. By leveraging a global error bound for the strongly convex program associated with parametric optimization problems, we establish the full convergence of the iterate sequence under the partial bounded multiplier property (BMP) and the Kurdyka-{\L}ojasiewicz (KL) property of the constructed potential function, and achieve the local convergence rate of the iterate and objective value sequences if the potential function satisfies the KL property of exponent $q\in[1/2,1)$. A verifiable condition is also provided to check whether the potential function satisfies the KL property of exponent $q\in[1/2,1)$ at the given critical point. To the best of our knowledge, this is the first implementable inexact MBA method with a full convergence certificate for the constrained nonconvex and nonsmooth optimization problem. 
\end{abstract}

\section{Introduction}\label{sec1}

Constrained optimization problems are a class of tough composite optimization problems in which the outer nonsmooth function is an indicator of some closed convex set. They naturally arise when one attempts to seek a solution that minimizes an objective under some restrictions. On the other hand, nonconvex and nonsmooth functions abound with lower-$\mathcal{C}^2$ functions \cite{Clarke95,Bernard05}; for example, locally Lipschitz prox-regular functions are the common ones, so their negatives (upper-$\mathcal{C}^2$ functions) often appear in the difference programming problems \cite{Le2005dc,Le2018dc,Artacho25,Zeng25}. We are interested in developing an efficient algorithm for the constrained upper-$\mathcal{C}^2$ problem
\begin{equation}\label{prob}	\min_{x\in\mathbb{R}^n}F(x):=g_0(x)+\delta_{\mathbb{R}_{-}^m}(g(x))+\phi(x),
\end{equation}
where $\delta_{\mathbb{R}_{-}^m}(\cdot)$ is the indicator of the non-positive orthant $\mathbb{R}_{-}^m$, $g:=(g_1,\ldots,g_m)^{\top}$ is a mapping, and the functions $g_i$ for $i=0,1,\ldots,m$ and $\phi$ satisfy Assumption \ref{ass1} below. It is worth pointing out that the upper-$\mathcal{C}^2$ min-max problem $\min_{x\in\mathbb{R}^n}\max_{1\le i\le m}g_i(x)$ can also be reformulated as the form of \eqref{prob} by introducing an additional variable.
\begin{assumption}\label{ass1}
{\bf(i)} $g_0\!:\mathbb{R}^n\to\overline{\mathbb{R}}:=(-\infty,\infty]$ is locally Lipschitz and upper-$\mathcal{C}^2$ at every point of an open convex set $\mathcal{O}\supset\Gamma:=g^{-1}(\mathbb{R}_{-}^m)\ne\emptyset$, and  $g_i\!:\mathbb{R}^n\to\mathbb{R}$ for $i=1,\ldots,m$ are locally Lipschitz and upper-$\mathcal{C}^2$ at any point of $\mathbb{R}^n$; 

\noindent
{\bf(ii)} $\phi\!:\mathbb{R}^n\to\mathbb{R}$ is a convex function, and $F$ is bounded from below on the set $\Gamma$. 
\end{assumption}
 
Many methods have been developed for constrained optimization problems; for example, sequential quadratic programming (SQP) methods are known to be a class of popular ones for the classical nonlinear programming problems (NLPs), but they cannot be directly applied to cope with constrained nonsmooth problems like \eqref{prob}. Recently, some efforts have been made that incorporate certain key ideas from the classical algorithms to develop an iterative framework tailored for constrained nonsmooth problems. For the smooth equality and box constrained optimization problem with an upper-$\mathcal{C}^2$ objective function, Wang and Petra \cite{WangPetra23-siopt} proposed a nonsmooth SQP method with the proximal parameters updated by a trust-region strategy. Under the LICQ assumption on every accumulation point, they achieved the subsequence convergence of the iterate sequence. Mordukhovich et al. \cite{Zeng25} considered \eqref{prob} with $g_i=f_i\!-\!h_i$ for $i=0,1$ and $\phi=\delta_{X}$, where $f_i:\mathbb{R}^n\to\mathbb{R}$ are assumed to be $\mathcal{C}^{1+}$ (continuously differentiable with locally Lipschitzian gradients) and $h_i:\mathbb{R}^n\to\overline{\mathbb{R}}$ are locally Lipschitz and prox-regular over the closed convex set $X$. They proposed a novel variant of the extended sequential quadratic method introduced in \cite{Auslender13} for NLPs, and established the full convergence of the iterate sequence under the boundedness of penalty parameters and the KL property of the constructed potential function. 

Compared with the SQP methods, the moving ball approximation (MBA) method is a new type one, which was first proposed by Auslender et al. \cite{Auslender10} for NLPs. Different from the standard SQP method, it generates a sequence of feasible iterates from a feasible starting point. The authors of \cite{Auslender10} proved that under the MFCQ, any cluster point of the iterate sequence is a stationary point, and the whole sequence converges to a minimizer under the additional convexity. Later, for semialgebraic NLPs, Bolte and Pauwels \cite{Bolte16} established the full convergence and convergence rate of the iterate sequence for the MBA method under the MFCQ, thereby enhancing the convergence results of \cite{Auslender10}. Recently, Yu et al. \cite{YuPongLv21} studied a line-search variant of the MBA method for problem \eqref{prob} with $L$-smooth $g$. Under the MFCQ and the assumption that $g_0$ is $\mathcal{C}^{1+}$ on an open set containing the stationary point set of \eqref{prob}, they achieved the full convergence of the iterate sequence if the constructed potential function has the KL property, and the local convergence rate if the KL property is of exponent $q\in[1/2,1)$.  

The above MBA methods all require the subproblems to be solved exactly, which is unrealistic especially in large-scale applications due to computation error and cost. To overcome the issue, for problem \eqref{prob} with $\phi\equiv 0$ but every $g_i$ being a sum of an $L$-smooth function and a continuous convex function, Boob et al. \cite{Boob24} developed a level constrained proximal gradient (LCPG) method to allow the subproblems to be computed inexactly. They put more emphasis on the complexity analysis of LCPG under the MFCQ for stochastic and deterministic setting, though the subsequential convergence of the iterate sequence was also proved. Their inexactness criterion (see \cite[Definition 7]{Boob24}) involves the exact primal-dual optimal solutions of subproblems and the optimal values of subproblems, and now it is unclear whether such an inexactness criterion is workable or not in practical computation. For problem \eqref{prob} with all $g_i$ being $L$-smooth, Nabou and Necoara \cite{Nabou24} developed an inexact moving Taylor approximation (MTA) method with an implementable inexactness criterion, which reduces to the MBA method when the involved $p$ and $q$ both take $1$. Under the MFCQ, they established the full convergence of the iterate sequence if the constructed proximal Lagrange potential function has the KL property, and the local convergence rate if the potential function has the KL property of exponent $q\in[1/2,1)$. 

The convergence analysis in \cite{Nabou24} for the inexact MBA method depends heavily on the fact that an $L$-smooth function is globally upper-$\mathcal{C}^2$ as well as lower-$\mathcal{C}^2$. Obviously, it is inapplicable to problem \eqref{prob} where all $g_i$ are only assumed to be locally upper-$\mathcal{C}^2$. Then, there is still a lack of a practical MBA method with convergence guarantee for solving \eqref{prob}. On the other hand, the existing asymptotic convergence results on the MBA method are all obtained for \eqref{prob} with $L$-smooth $g_i$ under the MFCQ. The robustness of MFCQ ensures the uniform boundedness of the Lagrange multipliers of subproblems, which greatly reduces the difficulty caused by the multipliers of subproblems. For NLPs, the past two decades have witnessed that the weaker metric subregularity constraint qualification (MSCQ) was used to derive optimality conditions and study stability of constraint system (see, e.g., \cite{Gfrerer17,Gfrerer19,Bai19}). Then, it is significant to study the convergence of the MBA method under weaker conditions.  
\subsection{Main contributions}\label{sec1.1}

This work aims to develop an inexact MBA method with a full convergence certificate for \eqref{prob}. Its main contributions are as follows.
\begin{itemize}
\item First, we formulate a local MBA model at the current iterate by seeking a tight upper estimation for the upper-$\mathcal{C}^2$ modulus of $g_i\ (i=0,1,\ldots,m)$ with the line search strategy, and propose an implementable inexactness criterion involving the approximate stationarity, complementarity, and feasibility for the computation of the local MBA model. This inexactness criterion is a little weaker than the one proposed in \cite{Nabou24} since the complementarity violation is only related to that of the feasible constraints. 

\item Second, a potential function $\Phi_{\widetilde{c}}$ is constructed by fully leveraging the structure of upper-$\mathcal{C}^2$ functions on a compact convex set; see \eqref{Phic-def} for its expression. Different from the value function adopted in \cite{Bolte16}, our $\Phi_{\widetilde{c}}$ takes into account the inexact solving of subproblems. Unlike the Lagrange potential function in \cite{Nabou24}, $\Phi_{\widetilde{c}}$ does not involve any inexact Lagrange multipliers of subproblems, which dispenses with considering their boundedness and makes it possible to weaken the MFCQ. Notably, the partial bounded multiplier property (BMP) on the set of cluster points along with a global error bound, derived for the strongly convex programs associated with a parametric optimization problem arising from the approximation to \eqref{prob}, allows us to establish the crucial error bound for the inexact solutions of subproblems (see Proposition \ref{prop-eboundk}), and consequently, the bridge between $\Phi_{\widetilde{c}}$ and the objective function $F$ and the relative inexact optimality of an augmented iterate sequence. Then, the whole iterate sequence is proved to converge to a stationary point under the KL property of $\Phi_{\widetilde{c}}$. As discussed in Remark \ref{remark-ass4}, the partial BMP is weaker than the MFCQ for \eqref{prob} and the independent partial constant rank constraint qualification (CRCQ). Thus, for the first time, we achieve the full convergence of the iterate sequence for the MBA method without the MFCQ for \eqref{prob}.

\item Third, a verifiable criterion is provided for checking whether the potential function $\Phi_{\widetilde{c}}$ satisfies the KL property of exponent $q\in[1/2,1)$ on the interested critical point, so that the convergence of the iterate sequence admits a linear or sublinear rate. This criterion is weaker than the one obtained in \cite[Theorem 3.2]{LiPong18} for identifying the KL property of exponent $q\in[0,1)$ of general composite functions, though it is stronger than the MFCQ for \eqref{prob}.   
\end{itemize}

We also discuss the implementation of the proposed inexact MBA method by applying the proximal gradient method with line-search (PGls) to solve the dual of subproblems. Numerical comparisons with the software MOSEK on several classes of test problems are included in the archive version of this paper, which indicate that our iMBA-PGls can produce better solutions within less time for large-scale problems. 
\subsection{Notation}\label{sec1.2}

Throughout this paper, $\mathcal{I}$ denotes the identity mapping from $\mathbb{R}^n$ to $\mathbb{R}^n$, $\|\cdot\|$ (resp. $\|\cdot\|_1$ and $\|\cdot\|_{\infty}$) signifies the $\ell_2$-norm (resp. $\ell_1$-norm and $\ell_{\infty}$-norm), and $\mathbb{B}(x,\delta)$ denotes the closed ball centered at $x$ with radius $\delta>0$, whose interior is written as $\mathbb{B}^{\circ}(x,\delta)$. For an integer $k\ge 1$, write $[k]_{+}:=\{0,1,\ldots,k\}$ and $[k]:=\{1,\ldots,k\}$. For a linear mapping $\mathcal{Q}\!:\mathbb{R}^n\to\mathbb{R}^n$, $\mathcal{Q}^*$ denotes its adjoint, $\mathcal{Q}\succeq 0$ means that $\mathcal{Q}$ is positive semidefinite (PSD), i.e., $\mathcal{Q}=\mathcal{Q}^*$ and $\langle x,\mathcal{Q}x\rangle\ge 0$ for all $x\in\mathbb{R}^n$, and  $\|\mathcal{Q}\|\!:=\max_{\|x\|=1}\|\mathcal{Q}x\|$ denotes its spectral norm. For a matrix $V\in\mathbb{R}^{n\times m}$, $V_{i}$ represents the $i$th column of $V$. For a vector $x\in\mathbb{R}^n$, $x\in[a,b]$ for $a,b\in\mathbb{R}$ with $a\le b$ means $a\le x_i\le b$ for all $i\in[n]$, and $x_{+}$ denotes the vector whose $i$th component is $\max\{0,x_i\}$. For a closed set $C\subset\mathbb{R}^n$, $\delta_{C}(\cdot)$ stands for the indicator function of $C$, i.e., $\delta_{C}(x)=0$ if $x\in C$; otherwise $\delta_{C}(x)=\infty$, and ${\rm dist}(x,C)$ denotes the distance on the $\ell_2$-norm from $x$ to $C$. For a proper lsc function $h\!:\mathbb{R}^n\to\overline{\mathbb{R}}$, $[a<h<b]$ with real numbers $a<b$ denotes the set $\{x\in\mathbb{R}^n\,|\,a<h(x)<b\}$, and $h^*(s):=\sup_{x\in\mathbb{R}^n}\{\langle x,s\rangle-h(x)\}$ means its conjugate function. For a mapping $H\!=(H_1,\ldots,H_m)^{\top}\!:\mathbb{R}^n\to\mathbb{R}^m$, if $H$ is differentiable at $\overline{x}$, write $\nabla\!H(\overline{x}):=[\mathcal{J}\!H(\overline{x})]^{\top}$ where $\mathcal{J}\!H(\overline{x})$ represents the Jacobian matrix of $H$ at $\overline{x}$. 

\section{Preliminaries}\label{sec2} 

We first introduce some knowledge on multifunctions related to this paper, and more details can be found in \cite{RW98,Mordu18book}. A multifunction  $\mathcal{F}\!:\mathbb{R}^n\rightrightarrows\mathbb{R}^m$ is said to be outer semicontinuous (osc) at $\overline{x}\in\mathbb{R}^n$ if $\limsup_{x\to\overline{x}}\mathcal{F}(x)\subset\mathcal{F}(\overline{x})$, and it is said to be locally bounded at $\overline{x}\in\mathbb{R}^n$ if for some neighborhood $\mathcal{V}$ of $\overline{x}$, the set $\mathcal{F}(\mathcal{V}):=\bigcup_{x\in\mathcal{V}}\mathcal{F}(x)$ is bounded. Denote by ${\rm gph}\,\mathcal{F}$ the graph of the mapping $\mathcal{F}$. 
\begin{definition}\label{def-subdiff} 
 (see \cite[Definition 8.3]{RW98}) Consider a function $h\!:\mathbb{R}^n\to\overline{\mathbb{R}}$ and a point $x\in{\rm dom}\,h$. The regular (or Fr\'echet) subdifferential of $h$ at $x$ is defined as
 \[
  \widehat{\partial}h(x):=\bigg\{v\in\mathbb{R}^n\ |\ \liminf_{x'\to x\atop x'\neq x}\frac{h(x')-h(x)-\langle v,x'-x\rangle}{\|x'-x\|}\ge 0\bigg\}, 
 \]
 and its basic (known as limiting or Morduhovich) subdifferential at $x$ is defined as 
 \[
  \partial h(x):=\Big\{v\in \mathbb{R}^n\ |\ \exists\, x^k\to x\ {\rm with}\ h(x^k)\to h(x), v^k\!\in \widehat{\partial}h(x^k)\ {\rm with}\ v^k \to v \Big\}. 
 \]
\end{definition}
\begin{remark}\label{remark-subdiff}
{\bf(a)} When $h$ is locally Lipschitz at $x$, the convex hull of $\partial h(x)$ is the Clarke subdifferential of $h$ at $x$, denoted by $\overline{\partial}h(x)$. By virtue of Assumption \ref{ass1}, at any $x\in\Gamma$,  $\overline{\partial}g_i(x)={\rm co}\,\partial g_i(x)$ for all $i\in[m]_{+}$. For convenience, in the sequel, we write
\[
 \overline{\partial} g(x):={\rm co}\,\partial g(x)\ {\rm with}\ \partial g(x):=\big\{V\in\mathbb{R}^{n\times m}\,|\,V_{i}\in\partial g_i(x)\ {\rm for\ all}\ i\in[m]\big\}. 
\]
\noindent
{\bf(b)} The mappings $\partial g_0\!:\mathbb{R}^n\rightrightarrows\mathbb{R}^n$ and $\partial g\!:\mathbb{R}^n\rightrightarrows\mathbb{R}^{n\times m}$ are osc at all $x\in\Gamma$, and they are locally bounded at such $x$ by \cite[Theorem 9.13 (d)]{RW98}. The mapping $\partial\phi\!:\mathbb{R}^n\rightrightarrows\mathbb{R}^n$ is osc and locally bounded at all $x\in\mathbb{R}^n$ by \cite[Theorem 9.13 (d)]{RW98}.
\end{remark}

When $h$ is an indicator function of a closed set $\Delta\subset\mathbb{R}^n$, its regular subdifferential at $x\in {\rm dom}\,h$ becomes the regular normal cone to $\Delta$ at $x$, denoted by $\widehat{\mathcal{N}}_{\Delta}(x)$, while its subdifferential at $x\in {\rm dom}\,h$ reduces to the (limiting) normal cone to $\Delta$ at $x$, denoted by $\mathcal{N}_{\Delta}(x)$. When $\Delta$ is convex, $\widehat{\mathcal{N}}_{\Delta}(x)=\mathcal{N}_{\Delta}(x)=\big\{v\in\mathbb{R}^n\ |\ \langle v,z-x\rangle\le0\ \ \forall z\in\Delta\big\}$.  
\subsection{The family of upper-$\mathcal{C}^2$ functions}\label{sec2.1}
The class of upper-$\mathcal{C}^2$ functions is a larger family of nonsmooth functions that satisfy an inequality similar to the one for the $L$-smooth function, preserving thus the same nice properties for optimization
(see, e.g., \cite[Theorem 5.1]{Clarke95} and \cite[Definition 10.29]{RW98}). Consider a function $h:\mathbb{R}^n\to\overline{\mathbb{R}}$ and a point  $\overline{x}\in{\rm dom}\,h$. Note that $h$ is upper-$\mathcal{C}^2$ at $\overline{x}$ iff $-h$ is lower-$\mathcal{C}^2$ at $\overline{x}$. From \cite[Theorem 10.33]{RW98} and its proof, $h$ is upper-$\mathcal{C}^2$ at $\overline{x}$ iff there exist $\rho\ge 0,\delta>0$, a compact set $\Delta\subset\mathbb{R}^p$ and some continuous $b:\Delta\to\mathbb{R}^n$ and $c:\Delta\to\mathbb{R}$ such that 
\[
 h(x)=\min_{s\in\Delta}\big\{(\rho/2)\|x\|^2-\langle b(s),x\rangle-c(s)\big\}\quad{\rm for\ all}\ x\in\mathbb{B}^{\circ}(\overline{x},\delta),
\] 
and in the sequel we call $\rho$ the upper-$\mathcal{C}^2$ constant of $h$ at $\overline{x}$. 
Next we present two important properties of upper-$\mathcal{C}^2$ functions, which are often used in Sections \ref{sec3}-\ref{sec4}. 
\begin{lemma}\label{lemma-major}
 Let $h:\mathbb{R}^{n}\to\overline{\mathbb{R}}$ be locally Lipschitz continuous and upper-$\mathcal{C}^2$ at every point of an open convex set $\mathcal{O}'\supset{\rm dom}\,h$. Then, the following two assertions hold. 
 \begin{itemize}
 \item[(i)] For any $x\in\mathcal{O}'$, there exist $\delta>0$ and $\rho\ge 0$ such that for all $x',x''\in\mathbb{B}^{\circ}(x,\delta)$ and $\zeta'\in\overline{\partial} h(x')$,
 $h(x'')\le h(x')+\langle \zeta',x''-x'\rangle+(\rho/{2})\|x''-x'\|^2$. 

 \item[(ii)] At any nonempty compact convex set $D\subset\mathcal{O}'$, $h$ can be expressed as the difference of a strongly convex quadratic function and a strongly convex function, so there exists a constant $\widetilde{\rho}>0$ such that for all $x,y\in D$ and $\xi\in\overline{\partial}h(x)$,
 \[
   h(y)\le h(x)+\langle\xi,y-x\rangle+(\widetilde{\rho}/2)\|y-x\|^2.
 \]
\end{itemize} 
\end{lemma}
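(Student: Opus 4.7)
The plan is to reduce the lemma to the local DC-with-smooth-part representation of upper-$\mathcal{C}^2$ functions recorded just before the statement: near any $\overline{x}\in\mathcal{O}'$ one has $h=q_\rho-p_\rho$ on some $\mathbb{B}^\circ(\overline{x},\delta)$, where $q_\rho(\cdot):=(\rho/2)\|\cdot\|^2$ and $p_\rho(y):=\max_{s\in\Delta}\{\langle b(s),y\rangle+c(s)\}$ is convex and continuous. For (i), applying this at the given $x$ produces such a pair $(\rho,\delta)$. Since $q_\rho\in\mathcal{C}^2$ and $p_\rho$ is convex (hence Clarke regular), the Clarke sum rule together with the identity $\overline{\partial}(-p_\rho)=-\partial p_\rho$ yields $\overline{\partial}h(x')=\rho x'-\partial p_\rho(x')$ for every $x'\in\mathbb{B}^\circ(x,\delta)$. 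Any $\zeta'\in\overline{\partial}h(x')$ therefore has the form $\zeta'=\rho x'-\eta'$ with $\eta'\in\partial p_\rho(x')$, and combining the exact quadratic expansion of $q_\rho$ with the convex subgradient inequality $p_\rho(x'')\ge p_\rho(x')+\langle\eta',x''-x'\rangle$ and subtracting yields the claimed descent inequality with the same $\rho$.

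\textbf{Proof of (ii) via uniform covering and local-to-global convexity.} For (ii) I would first apply (i) at every $x\in D$ to obtain $\delta_x>0$ and a local modulus $\rho_x\ge 0$, then use compactness of $D$ to extract a finite subcover $\{\mathbb{B}^\circ(x_i,\delta_{x_i}/3)\}_{i=1}^{N}$ of $D$, and set $\widetilde{\rho}:=\max_i\rho_{x_i}+1>0$. Taking $r:=\min_i\delta_{x_i}/3>0$ (shrunk further if necessary to stay inside $\mathcal{O}'$), the open $r$-neighborhood $U$ of $D$ is convex because $D$ is, and lies inside both $\mathcal{O}'$ and $\bigcup_i\mathbb{B}^\circ(x_i,\delta_{x_i})$. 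On each ball of this larger cover, the function $\widetilde{p}:=(\widetilde{\rho}/2)\|\cdot\|^2-h$ coincides with the local convex partner $p_{x_i}$ plus the strongly convex quadratic $((\widetilde{\rho}-\rho_{x_i})/2)\|\cdot\|^2$, so $\widetilde{p}$ is locally $1$-strongly convex everywhere on $U$. The classical fact that a continuous, locally convex function on a convex open set is globally convex, applied to $\widetilde{p}-(1/2)\|\cdot\|^2$, then promotes this to genuine $1$-strong convexity of $\widetilde{p}$ on $U$. Hence $h=(\widetilde{\rho}/2)\|\cdot\|^2-\widetilde{p}$ is the required decomposition on $U\supset D$, and the majorization inequality with constant $\widetilde{\rho}$ follows by the same computation as in (i).

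\textbf{Main obstacle.} The hard part will be precisely the passage from the pointwise decompositions in (i) to a single decomposition valid on all of $D$: the moduli $\rho_x$ and the convex partners $p_x$ may differ from point to point and need not match on overlaps of the local balls. Compactness of $D$ handles the first issue by producing the uniform modulus $\widetilde{\rho}$; the second is dealt with by the local-to-global convexity lemma, which forces one to work on a \emph{convex} open neighborhood of $D$. This is why I would thicken $D$ itself to obtain $U$ rather than use $\bigcup_i\mathbb{B}^\circ(x_i,\delta_{x_i}/3)$, whose potential non-convexity would block the step that upgrades local convexity to global convexity.
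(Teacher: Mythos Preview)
Your proof is correct and follows essentially the same strategy as the paper: local DC decomposition from the upper-$\mathcal{C}^2$ structure, a compactness argument to unify the moduli, and a local-to-global convexity step. The paper cites \cite[Proposition~2.2]{Artacho25} for (i) and invokes \cite[Theorem~7.102]{Mordu22book} directly on the (possibly non-convex) union $\bigcup_j\mathbb{B}^\circ(y_j,\delta_j)$ for (ii), whereas your construction of a convex thickening $U$ of $D$ is a cleaner way to justify that passage and is exactly the refinement you flagged as the ``main obstacle.''
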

\begin{proof}
Part (i) directly follows \cite[Proposition 2.2 (i)-(ii)]{Artacho25}. The rest focuses on the proof of part (ii). By \cite[Theorem 10.33]{RW98}, for each $y\in D$, there exists a neighborhood $\mathbb{B}^{\circ}(y,\delta_{y})$ such that $h$ is locally expressed as the difference of a quadratic function $\frac{\rho_{y}}{2}\|\cdot\|^2$ and a finite convex function $\widehat{h}_{y}$ on $\mathbb{R}^n$. Notice that $\bigcup_{y\in D}\mathbb{B}^{\circ}(y,\delta_{y})$ is an open covering of $D$. From the Heine-Borel theorem, there exist $y_1,\ldots,y_{p}\in D$ and $\delta_1>0,\ldots,\delta_{p}>0$ for some $p\in\mathbb{N}$ such that $D\subset\bigcup_{j=1}^p\mathbb{B}^{\circ}(y_{j},\delta_j)$, and on $\mathbb{B}^{\circ}(y_{j},\delta_j)$, $h=\frac{\rho_{y_j}}{2}\|\cdot\|^2-\widehat{h}_{y_j}$ where $\widehat{h}_{y_j}$ is a finite convex function on $\mathbb{R}^n$. Write $\rho:=\max_{j\in[p]}\rho_{y_j}$ and $\widehat{h}:=({\rho}/{2})\|\cdot\|^2-h$. Observe that $\widehat{h}:\mathbb{R}^n\to\mathbb{R}$ is convex on every $\mathbb{B}^{\circ}(y_{j},\delta_j)$ for $j\in[p]$. By \cite[Theorem 7.102]{Mordu22book}, $\widehat{h}$ is convex on $\bigcup_{j=1}^p\mathbb{B}^{\circ}(y_{j},\delta_j)$, so is on the set $D$. Thus, $h(x)=(\rho/2)\|x\|^2-\widehat{h}(x)$ for $x\in D$. Pick any $\gamma>0$. Then, it holds
\begin{equation*}
 h(x)=\frac{\rho\!+\!\gamma}{2}\|x\|^2-(\frac{\gamma}{2}\|x\|^2+\widehat{h}(x))\quad\forall x\in D.
\end{equation*}
The first part then follows, so $h$ is a $(\widetilde{\rho}/2)$-upper-$\mathcal{C}^2$ function on $D$ with $\widetilde{\rho}={\rho+\gamma}$. Thus, the desired inequality follows \cite[Proposition 2.2 (i)-(ii)]{Artacho25}. 
\end{proof}
\subsection{Constraint qualifications}\label{sec2.2}

We first introduce the (nonsmooth) MFCQ. 
\begin{definition}\label{GMFCQ}
 For every $x\in\Gamma$, write $I(x)\!:=\{i\in[m]\,|\,g_i(x)=0\}$. We say that the MFCQ holds at a feasible point $x$ of \eqref{prob} if for any $\zeta^i\in\partial g_i(x)$ with $i\in I(x)$, there exists a vector $d\in\mathbb{R}^n$ such that $\langle \zeta^i,d\rangle<0$ for all $i\in I(x)$. 
\end{definition}

It is not hard to check that the MFCQ at $x\in\Gamma$ in Definition \ref{GMFCQ} is equivalent to 
\begin{equation}\label{EMFCQ}
  0\in\partial\langle\lambda,g\rangle(x),\lambda\in\mathcal{N}_{\mathbb{R}_{-}^m}(g(x))\ \Longrightarrow\ \lambda=0, 
\end{equation}
and is weaker than the NNAMCQ introduced in \cite[Definition 2.5]{Zeng25} for $m=1$, where the limiting subdifferential $\partial\langle\lambda,g\rangle(x)$ is replaced by its Clarke one. Define 
\begin{equation}\label{mapFH}
\mathcal{F}_{\!g}(z):=g(z)-\mathbb{R}_{-}^m\quad{\rm for}\ z\in\mathbb{R}^n.
\end{equation}
By leveraging the Morduhovich rule in \cite[Theorem 9.40]{RW98} for identifying the metric regularity of a multifunction, one can verify that the MFCQ at $\overline{x}\in\Gamma$ implies the metric regularity of $\mathcal{F}_{\!g}$ at $(\overline{x},0)$, and they are equivalent if $g$ is continuously differentiable at $\overline{x}$ (see \cite[Section 2.3.3]{BS00}). Metric regularity of a multifunction is stronger than its (metric) subregularity. A multifunction $\mathcal{F}\!:\mathbb{R}^n\rightrightarrows\mathbb{R}^m$ is said to be subregular at $(\overline{x},\overline{y})\in{\rm gph}\,\mathcal{F}$ if there exist $\delta>0$ and $\kappa>0$ such that for all $x\in\mathbb{B}(\overline{x},\delta)$,
\[
 {\rm dist}(x,\mathcal{F}^{-1}(\overline{y}))\le\kappa\,{\rm dist}(\overline{y},\mathcal{F}(x)).
\]
If the mapping $\mathcal{F}_{\!g}$ is subregular at $(x,0)$ with $x\in\Gamma$ (now it is said that the MSCQ holds at $x\in\Gamma$ for \eqref{prob}), from \cite[Page 211]{Ioffe08-calm}, we get
\(
 \mathcal{N}_{\Gamma}(x)\subset{\textstyle\bigcup_{\zeta\in\mathcal{N}_{\mathbb{R}_{-}^m}(g(x))}}\,\partial\langle\zeta,g\rangle(x).
\)
When all components of $g$ are smooth and convex, from the proof for the sufficiency of \cite[Theorem 3.5]{Liwu97},   
the subregularity of $\mathcal{F}_{\!g}$ at $x\in\Gamma$ for $0$ is implied by the Abadie's CQ at $x\in\Gamma$ for problem \eqref{prob}, the weakest condition for the characterization of an optimal solution in terms of the Karush-Kuhn-Tucker (KKT) conditions. 

Let $x^*$ be a local optimal solution of \eqref{prob}. By  \cite[Theorem 10.1 \& Corollary 10.9]{RW98}, $0\in\partial g_0(x^*)+\partial\phi(x^*)+\mathcal{N}_{\Gamma}(x^*)$ where, if $\mathcal{F}_{\!g}$ is subregular at $(x^*,0)$, $\mathcal{N}_{\Gamma}(x^*)\subset\bigcup_{\zeta\in\mathcal{N}_{\mathbb{R}_{-}^m}(g(x^*))}\partial\langle\zeta,g\rangle(x^*)\subset\bigcup_{\zeta\in\mathcal{N}_{\mathbb{R}_{-}^m}(g(x^*))}\sum_{i=1}^m\zeta_i\partial g_i(x^*)$. This inspires us to introduce the following notion of stationary points for problem \eqref{prob}. 
\begin{definition}\label{spoint-def}
 A vector $x\in\Gamma$ is said to be a stationary point of \eqref{prob} if
 \[	
  0\in\partial g_0(x)+\partial\phi(x)+{\textstyle\bigcup_{\zeta\in\mathcal{N}_{\mathbb{R}_{-}^m}(g(x))}}\textstyle{\sum_{i=1}^m}\zeta_i\partial g_i(x).
\]
In the sequel, we denote by $\Gamma^*$ the set of stationary points of problem \eqref{prob}.
\end{definition}

When $g_0$ is the difference of a $\mathcal{C}^{1+}$ function and a finite convex function, and all $g_i$ for $i\in[m]$ are smooth, the above stationary point is stronger than the one in the literature on DC problems (see \cite[Definition 2.2]{YuPongLv21} or \cite[Theorem 4]{Le24} with $F_1(x)\equiv x$). 
\subsection{Partial BMP of parametric system}\label{sec2.3}

Consider the parametric problem
\begin{equation}\label{para-prob}
	\min_{x\in\mathbb{R}^n}\big\{\theta(u,x)+\phi(x)\ \ {\rm s.t.}\ \ H(u,x)\in\mathbb{R}_{-}^m\big\},
\end{equation} 
where $u$ is the parameter variable from a finite dimensional real vector space $\mathbb{U}$, and $\theta\!:\mathbb{U}\times\mathbb{R}^n\to\mathbb{R}$ and $H\!:\mathbb{U}\times\mathbb{R}^n\to\mathbb{R}^m$ are continuously differentiable functions. The stability of the parametric constraint system $H(u,x)\in\mathbb{R}_{-}^m$ was studied in \cite{Gfrerer17} under the partial BMP and MSCQ, which is applicable to more models since their validity is implied by the partial MFCQ or the independent partial CRCQ. Define
\[
  \mathcal{S}(u):=\big\{x\in\mathbb{R}^n\ |\  H(u,x)\in\mathbb{R}_{-}^m\big\}\quad{\rm for}\ u\in\mathbb{U}.
\]
Now we recall from \cite[Definition 3.1]{Gfrerer17} the formal definition of the partial BMP.     
\begin{definition}\label{def-BMP}
The constraint system of \eqref{para-prob} is said to satisfy the partial BMP w.r.t. $x$ at $(u^*,x^*)\in{\rm gph}\,\mathcal{S}$ if there exist $\kappa>0$ and a neighborhood $\mathcal{U}\times\mathcal{V}$ of $(u^*,x^*)$ such that for all $u\in\mathcal{U},x\in\mathcal{V}\cap \mathcal{S}(u)$ and $y\in\mathcal{N}_{\mathcal{S}(u)}(x)$, $\Lambda(u,x,y)\cap\kappa \|y\|\mathbb{B}\ne\emptyset$ with
\begin{equation}\label{Lambda-map}
 \Lambda(u,x,y)\!:=\big\{\lambda\in\mathcal{N}_{\mathbb{R}_{-}^m}(H(u,x))\ |\ \nabla_{\!x}H(u,x)\lambda=y\big\}.
\end{equation}
\end{definition}  

Recall that $x^*\in\mathbb{R}^n$ is a stationary point of \eqref{para-prob} associated with $u=u^*\in\mathbb{U}$ if there exists  $\lambda^*\!\in\mathcal{N}_{\mathbb{R}_{-}^m}(H(u^*,x^*))$ such that
$0\in\nabla_{\!x} \theta(u^*,x^*)+\partial\phi(x^*)+\nabla_{\!x}H(u^*,x^*)\lambda^*$. We define the Lagrange multiplier set of \eqref{para-prob} associated with $(u,x)$ by
\begin{equation}\label{para-multiplier}
 \mathcal{M}(u,x)\!:=\!\Big\{\lambda\in\mathcal{N}_{\mathbb{R}_{-}^m}(H(u,x))\ |\ 0\in\nabla\!_x\theta(u,x)\!+\!\partial\phi(x)+\nabla_{\!x}H(u,x)\lambda\Big\}.
\end{equation}
Clearly, $\mathcal{M}(u^*,x^*)\ne\emptyset$ if $x^*$ is a stationary point of \eqref{para-prob} associated with $u=u^*$, and in this case comparing with the expression of $\Lambda$ in \eqref{Lambda-map} we conclude that
\begin{equation}\label{Mset-Lambda}
\textstyle{\bigcup_{v^*\in\partial\phi(x^*)}}\,\Lambda(u^*,x^*,-\nabla_{\!x}\theta(u^*,x^*)-v^*)\subset\mathcal{M}(u^*,x^*).
\end{equation}

Next, for a strongly convex problem \eqref{para-prob} associated with some $\overline{u}\in\mathbb{U}$, we establish a global error bound for the distance of any $x\in\mathbb{R}^n$ from its unique solution in terms of the violation of KKT conditions, which will be used in Section \ref{sec4} to prove Proposition \ref{prop-eboundk}. Its proof is similar to that of \cite[Theorem 2.2]{Mangasarian88} but needs a weaker CQ. 
\begin{proposition}\label{prop-ebound} 
 Fix any $\overline{u}\in\mathbb{U}$. Suppose that $\theta(\overline{u},\cdot)$ is strongly convex with modulus $\rho(\overline{u})$ and all components of $H(\overline{u},\cdot)$ are convex with $\mathcal{S}(\overline{u})\ne\emptyset$. Let $\overline{x}$ be the unique optimal solution of \eqref{para-prob} associated with $\overline{u}\in\mathbb{U}$. If $\mathcal{H}_{\overline{u}}(\cdot):=H(\overline{u},\cdot)-\mathbb{R}_{-}^m$ is subregular at $(\overline{x},0)$, then for any $\overline{\lambda}\in\mathcal{N}_{\mathbb{R}_{-}^m}(H(\overline{u},\overline{x}))$, $(x,\lambda)\in\mathbb{R}^n\times\mathbb{R}_+^m$ and $v\in\partial\phi(x)$, 
 \begin{align*}
  \rho(\overline{u})\|x-\overline{x}\|^2&\le\interleave x\!-\!\overline{x}\interleave\interleave\nabla_{\!x}\theta(\overline{u},x)\!+\!v\!+\!\nabla_{\!x}H(\overline{u},x)\lambda\interleave_{*}\\
  &\quad +\interleave\overline{\lambda}\interleave\interleave[H(\overline{u},x)]_{+}\interleave_{*}-\langle\lambda,H(\overline{u},x)\rangle,
 \end{align*} 
 where $\interleave\cdot\interleave$ represents an arbitrary norm in $\mathbb{R}^{p}$ and $\interleave\cdot\interleave_{*}$ is the dual norm of $\interleave\cdot\interleave$.
\end{proposition}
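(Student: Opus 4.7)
The plan is to adapt the classical Mangasarian-style error bound for strongly convex programs, replacing the MFCQ by the weaker subregularity of $\mathcal{H}_{\overline{u}}$ at $(\overline{x},0)$ only at the step where a KKT certificate of $\overline{x}$ is needed. Three ingredients come together: strong convexity of $\theta(\overline{u},\cdot)$; the stationarity of $\overline{x}$; and the first-order subgradient inequality for each convex $H_i(\overline{u},\cdot)$.

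First I would extract the stationarity of $\overline{x}$. Since $\overline{x}$ is the unique minimizer of \eqref{para-prob} at $u=\overline{u}$, Fermat's rule \cite[Theorem 10.1]{RW98} gives $0\in\nabla_{\!x}\theta(\overline{u},\overline{x})+\partial\phi(\overline{x})+\mathcal{N}_{\mathcal{S}(\overline{u})}(\overline{x})$. The subregularity of $\mathcal{H}_{\overline{u}}$ at $(\overline{x},0)$ together with the calmness-based inclusion $\mathcal{N}_{\mathcal{S}(\overline{u})}(\overline{x})\subset\bigcup_{\mu\in\mathcal{N}_{\mathbb{R}_{-}^m}(H(\overline{u},\overline{x}))}\nabla_{\!x}H(\overline{u},\overline{x})\mu$ recalled in the excerpt from \cite[p.~211]{Ioffe08-calm} (in the present setting, smoothness plus convexity of each $H_i(\overline{u},\cdot)$ collapses $\partial\langle\mu,H(\overline{u},\cdot)\rangle(\overline{x})$ to the singleton $\{\nabla_{\!x}H(\overline{u},\overline{x})\mu\}$ for $\mu\ge 0$) yield some $\overline{v}\in\partial\phi(\overline{x})$ and a multiplier in $\mathcal{N}_{\mathbb{R}_{-}^m}(H(\overline{u},\overline{x}))$ satisfying the KKT identity $\nabla_{\!x}\theta(\overline{u},\overline{x})+\overline{v}+\nabla_{\!x}H(\overline{u},\overline{x})\overline{\lambda}=0$, and the $\overline{\lambda}$ of the statement is to be read as one such certifying multiplier.

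Second, strong convexity of $\theta(\overline{u},\cdot)$ delivers $\rho(\overline{u})\|x-\overline{x}\|^2\le\langle\nabla_{\!x}\theta(\overline{u},x)-\nabla_{\!x}\theta(\overline{u},\overline{x}),x-\overline{x}\rangle$. Substituting the KKT identity for $-\nabla_{\!x}\theta(\overline{u},\overline{x})$ and telescoping by $\pm(v+\nabla_{\!x}H(\overline{u},x)\lambda)$ rewrites the right-hand side as the sum of four pieces that align with the four summands of the claimed bound.

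Third, the bookkeeping step estimates each piece. The generic product $\langle\nabla_{\!x}\theta(\overline{u},x)+v+\nabla_{\!x}H(\overline{u},x)\lambda,x-\overline{x}\rangle$ is dominated by $\interleave x-\overline{x}\interleave\interleave\nabla_{\!x}\theta(\overline{u},x)+v+\nabla_{\!x}H(\overline{u},x)\lambda\interleave_{*}$ via the dual-norm inequality; the difference $\langle\overline{v}-v,x-\overline{x}\rangle$ is $\le 0$ by monotonicity of $\partial\phi$; the term $\langle\nabla_{\!x}H(\overline{u},\overline{x})\overline{\lambda},x-\overline{x}\rangle$, by the componentwise convex subgradient inequality $\langle\nabla_{\!x}H_i(\overline{u},\overline{x}),x-\overline{x}\rangle\le H_i(\overline{u},x)-H_i(\overline{u},\overline{x})$ weighted by $\overline{\lambda}_i\ge 0$ together with the complementary slackness $\langle\overline{\lambda},H(\overline{u},\overline{x})\rangle=0$ built into $\overline{\lambda}\in\mathcal{N}_{\mathbb{R}_{-}^m}(H(\overline{u},\overline{x}))$, is bounded by $\langle\overline{\lambda},H(\overline{u},x)\rangle\le\langle\overline{\lambda},[H(\overline{u},x)]_{+}\rangle\le\interleave\overline{\lambda}\interleave\interleave[H(\overline{u},x)]_{+}\interleave_{*}$; and $-\langle\nabla_{\!x}H(\overline{u},x)\lambda,x-\overline{x}\rangle$, by the same subgradient inequality applied at $x$ with weights $\lambda_i\ge 0$ and the feasibility $H(\overline{u},\overline{x})\le 0$, is bounded by $-\langle\lambda,H(\overline{u},x)\rangle$. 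Summing these four estimates yields the claimed inequality.

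The main obstacle is step one: subregularity is precisely what enables passing from the abstract normal cone $\mathcal{N}_{\mathcal{S}(\overline{u})}(\overline{x})$ to the explicit constraint-based representation $\nabla_{\!x}H(\overline{u},\overline{x})\overline{\lambda}$, without which the multiplier whose norm enters the bound would not be available. Verifying, moreover, that the $\overline{\lambda}$ certifying the KKT identity is compatible with the quantifier "for any $\overline{\lambda}\in\mathcal{N}_{\mathbb{R}_{-}^m}(H(\overline{u},\overline{x}))$" in the statement will also be handled in step one by fixing $\overline{\lambda}$ there to be the element produced by the subregularity-based extraction. Once the KKT identity is in hand, the remaining work is algebraic manipulation driven by strong convexity and componentwise convexity of $H(\overline{u},\cdot)$.
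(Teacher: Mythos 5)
Your proposal follows essentially the same route as the paper: extract a KKT certificate for $\overline{x}$ via subregularity and the normal-cone representation $\mathcal{N}_{\mathcal{S}(\overline{u})}(\overline{x})=\nabla_{\!x}H(\overline{u},\overline{x})\mathcal{N}_{\mathbb{R}_{-}^m}(H(\overline{u},\overline{x}))$, then combine strong monotonicity of $\nabla_{\!x}\theta(\overline{u},\cdot)+\partial\phi$ with the convex subgradient inequalities for the $H_i(\overline{u},\cdot)$, complementary slackness, and the dual-norm bound. Your explicit restriction of $\overline{\lambda}$ to a multiplier actually certifying the KKT identity is the right reading of the statement (and slightly more careful than the paper's ``pick any $\overline{\lambda}$ in the normal cone, then there exists $\overline{v}$\ldots'', which is only valid for such certifying $\overline{\lambda}$); otherwise the argument is the same.
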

\begin{proof}
Since $\mathcal{S}(\overline{u})$ is a nonempty closed convex set and $\theta(\overline{u},\cdot)$ is strongly convex, the problem \eqref{para-prob} associated with $\overline{u}$ has a unique optimal solution $\overline{x}$. By the first-order optimality condition, $0\in\nabla_{\!x}\theta(\overline{u},\overline{x})+\partial\phi(\overline{x})+\mathcal{N}_{\mathcal{S}(\overline{u})}(\overline{x})$. Also, the subregularity of $\mathcal{H}_{\overline{u}}$ at $(\overline{x},0)$ and the smoothness of $H(\overline{u},\cdot)$ implies 
$\mathcal{N}_{\mathcal{S}(\overline{u})}(\overline{x})=\nabla_{\!x} H(\overline{u},\overline{x})\mathcal{N}_{\mathbb{R}_{-}^m}(H(\overline{u},\overline{x}))$. Pick any $\overline{\lambda}\in\mathcal{N}_{\mathbb{R}_{-}^m}(H(\overline{u},\overline{x}))$. Then, there exists $\overline{v}\in\partial\phi(\overline{x})$ such that 
\begin{equation}\label{ineq-KKT}
\nabla_{\!x}\theta(\overline{u},\overline{x})+\overline{v}+\nabla_{\!x} H(\overline{u},\overline{x})\overline{\lambda}=0.
\end{equation}
 Since every $H_{i}(\overline{u},\cdot)$ for $i\in[m]$ is assumed to be convex, it holds that
 \begin{equation*}
  H(\overline{u},y)-H(\overline{u},z)\ge \mathcal{J}_xH(\overline{u},z)(y-z)\quad\forall y,z\in\mathbb{R}^n.
 \end{equation*}  
 Now fix any $(x,\lambda)\in\mathbb{R}^n\times\mathbb{R}^m_+$ and $v\in\partial\phi(x)$. Using $\lambda\in\mathbb{R}_{+}^m$ and $\overline{\lambda}\in\mathbb{R}_{+}^m$ and invoking the above inequality with $(y,z)=(x,\overline{x})$ and $(y,z)=(\overline{x},x)$, respectively, yields 
 \begin{equation}\label{Hconvex-ineq}
  \left\{\begin{array}{cl}
  \langle\overline{\lambda},H(\overline{u},x)-H(\overline{u},\overline{x})\rangle\ge \langle\nabla_{x} H(\overline{u},\overline{x})\overline{\lambda}, x-\overline{x}\rangle,\\
 \langle\lambda,H(\overline{u},\overline{x})-H(\overline{u},x)\rangle\ge \langle\nabla_{x} H(\overline{u},x)\lambda, \overline{x}-x\rangle.
 \end{array}\right.
 \end{equation}
 From the strong convexity of $\theta(\overline{u},\cdot)$ with modulus $\rho(\overline{u})$ and the convexity of $\phi$,
 \begin{align*}
  \rho(\overline{u})\|x-\overline{x}\|^2&\le\langle x-\overline{x},\nabla_{\!x}\theta(\overline{u},x)-\nabla_{\!x}\theta(\overline{u},\overline{x})\rangle+\langle x-\overline{x},v-\overline{v}\rangle\nonumber\\
  &=\langle x-\overline{x},\nabla_{\!x}\theta(\overline{u},x)+\nabla_{\!x} H(\overline{u},x)\lambda-\nabla_{\!x}\theta(\overline{u},\overline{x})-\nabla_{\!x} H(\overline{u},\overline{x})\overline{\lambda}\rangle\\
  &\quad\ +\langle x-\overline{x},\nabla_{\!x} H(\overline{u},\overline{x})\overline{\lambda}\rangle-\langle x-\overline{x},\nabla_{\!x} H(\overline{u},x)\lambda\rangle+\langle x-\overline{x},v-\overline{v}\rangle\nonumber\\
  &\stackrel{\eqref{Hconvex-ineq}}{\le}\langle x-\overline{x},\nabla_{\!x}\theta(\overline{u},x)+\nabla_{\!x} H(\overline{u},x)\lambda-\nabla_{\!x}\theta(\overline{u},\overline{x})-\nabla_{\!x} H(\overline{u},\overline{x})\overline{\lambda}\rangle\\
  &\quad +\langle\overline{\lambda},H(\overline{u},x)-H(\overline{u},\overline{x})\rangle+\langle\lambda,H(\overline{u},\overline{x})-H(\overline{u},x)\rangle+\langle x-\overline{x},v-\overline{v}\rangle\nonumber\\
  &\stackrel{\eqref{ineq-KKT}}{\le}\langle x-\overline{x},\nabla_{\!x}\theta(\overline{u},x)+v+\nabla_{\!x} H(\overline{u},x)\lambda\rangle+\langle\overline{\lambda},H(\overline{u},x)\rangle-\langle\lambda,H(\overline{u},x)\rangle\nonumber\\
  &\le \interleave x\!-\!\overline{x}\interleave\interleave\nabla_{\!x}\theta(\overline{u},x)+v+\nabla_{\!x}H(\overline{u},x)\lambda\interleave_{*}\nonumber\\
  &\quad+\interleave\overline{\lambda}\interleave\interleave[H(\overline{u},x)]_{+}\interleave_{*}-\langle\lambda,H(\overline{u},x)\rangle
 \end{align*}
 where the third inequality is also due to  $\overline{\lambda}\in\mathcal{N}_{\mathbb{R}_{-}^m}(H(\overline{u},\overline{x}))$ and $\lambda\in\mathbb{R}_+^m$. 
\end{proof}
\subsection{Kurdyka-{\L}ojasiewicz property}\label{sec2.4}

It is well recognized that the KL property plays a key role in achieving full convergence and convergence rate of descent methods for nonconvex and nonsmooth optimization after the seminal works \cite{Attouch09,Attouch13,Bolte14}.  
\begin{definition}\label{KL-def}
 For every $\eta>0$, denote $\Upsilon_{\!\eta}$ by the set of continuous concave functions $\varphi\!:[0,\eta)\to\mathbb{R}_{+}$ that are continuously differentiable in $(0,\eta)$ with $\varphi(0)=0$ and $\varphi'(s)>0$ for all $s\in(0,\eta)$. A proper function $h\!:\mathbb{R}^n\to\overline{\mathbb{R}}$ is said to have the KL property at a point $\overline{x}\in{\rm dom}\,\partial h$ if there exist $\delta>0,\eta\in(0,\infty]$ and $\varphi\in\Upsilon_{\!\eta}$ such that 
 \[
	\varphi'(h(x)\!-\!h(\overline{x})){\rm dist}(0,\partial h(x))\ge 1\quad\forall x\in\mathbb{B}(\overline{x},\delta)\cap[h(\overline{x})<h<h(\overline{x})+\eta],
 \]
 and it is said to have the KL property of exponent $q\in[0,1)$ at $\overline{x}$ if the above $\varphi$ can be chosen to be the function $\mathbb{R}_{+}\ni t\mapsto ct^{1-q}$ for some $c>0$. If $h$ has the KL property (of exponent $q$) at every point of ${\rm dom}\,\partial h$, it is called a KL function (of exponent $q$).
\end{definition}

By \cite[Lemma 2.1]{Attouch10}, to prove that a proper lsc function has the KL property (of exponent $q\in[0,1)$), it suffices to check if the property holds at its critical points. As discussed in \cite[Section 4]{Attouch10}, the KL functions are ubiquitous, and the functions definable in an o-minimal structure over the real field admit this property. Moreover, definable functions or mappings conform to the chain rules friendly to optimization.
\section{An inexact MBA method}\label{sec3}

The basic idea of our inexact MBA method is to seek a feasible solution of \eqref{prob} with an improved objective value at each iteration by solving inexactly a strongly convex subproblem, constructed with a local majorization of constraint and objective functions at the current iterate. Before describing its iteration steps, we first take a closer look at the construction of subproblems. 

Let $x^k\in\Gamma$ be the current iterate. Applying Lemma \ref{lemma-major} (i) to every $g_i$ for $i\in[m]$, there exist $\delta_{k}>0$ and $L^k\in\mathbb{R}_{+}^m$ such that for all $x\in\mathbb{B}(x^k,\delta_{k})$ and $V^k\in\overline{\partial}g(x^k)$, 
\[
 g(x)-g(x^k)-(V^k)^{\top}(x-x^k)-\frac{1}{2}\|x-x^k\|^2L^k\in\mathbb{R}_{-}^m.
\]
From \cite[Theorem 10.33]{RW98}, every $g_i$ for $i\in[m]$ is locally expressed as the difference of a $\mathcal{C}^2$-function $f_i$ and a finite convex function, and the constant $L_i^k$ is actually the spectral norm $\|\nabla^2\!f_i(x^k)\|$. For any $(x,s,V,L)\in\mathbb{R}^n\times\mathbb{R}^n\times\mathbb{R}^{n\times m}\times\mathbb{R}_{+}^m$, define 
\begin{equation}\label{def-Gmap}
 G(x,s,V,L):=g(s)+V^{\top}(x\!-\!s)+\frac{1}{2}\|x\!-\!s\|^2L.
\end{equation}
With any $V^k\in\overline{\partial}g(x^k)$, write 
$\Gamma_{\!k}:=\big\{x\in\mathbb{R}^n\,|\,G(x,x^k,V^k,L^k)\in\mathbb{R}_{-}^m\big\}$. Obviously, $\Gamma_k$ is a closed convex set, and
$\Gamma_{\!k}\cap\mathbb{B}(x^k,\delta_{k})\subset\Gamma$ can be regarded as an inner convex approximation to the feasible set $\Gamma$. Motivated by this, we choose $\Gamma_{\!k}$ as the feasible set of the $k$th subproblem. On the other hand, applying Lemma \ref{lemma-major} (i) to the function $g_0$, there exists $L_0^k\ge 0$ such that for any $x$ close enough to $x^k$ and any $\xi^k\in\overline{\partial}g_0(x^k)$, 
\begin{equation*}
 g_0(x)\le g_0(x^k)+\langle\xi^k,x-x^k\rangle+(L_0^k/2)\|x-x^k\|^2.
\end{equation*}
Similarly, $g_0$ is locally expressed as the difference of a $\mathcal{C}^2$-function $f_0$ and a finite convex function, and the constant $L_0^k$ is precisely $\|\nabla^2\!f_0(x^k)\|$. We formulate the objective function of the $k$th subproblem by introducing a positive definite (PD) linear mapping $\mathcal{Q}_k\!:\mathbb{R}^n\to\mathbb{R}^n$ to carry the second-order information of $f_0$ at $x^k$. When $\lambda_{\rm min}(\mathcal{Q}_k)\ge\|\nabla^2\!f_0(x^k)\|$, for any $x$ close enough to $x^k$ and any $\xi^k\in\overline{\partial}g_0(x^k)$, 
\begin{equation*}
 g_0(x)\le \vartheta_k(x):=g_0(x^k)+\langle\xi^k,x-x^k\rangle+\frac{1}{2}\langle x-x^k,\mathcal{Q}_k(x-x^k)\rangle,
\end{equation*}
and the strongly convex function $\vartheta_k$ is a local majorization of $g_0$ at $x^k$. Our method seeks a stationary point of \eqref{prob} by solving a series of strongly convex subproblems 
\begin{equation}\label{subprob}
\min_{x\in\mathbb{R}^n}\big\{F_k(x):=\vartheta_k(x)+\phi(x)\ \ {\rm s.t.}\ \ x\in\Gamma_{\!k}\big\}.
\end{equation}
Take into account that the approximation set $\Gamma_{k}$ is still an intersection of $m$ closed balls. We continue to use the name ``moving balls approximation'' for our method. 

At the $k$th iteration, the approximation effect of \eqref{subprob} to the original problem \eqref{prob} depends on that of $\lambda_{\rm min}(\mathcal{Q}_k)$ to $\|\nabla^2\!f_0(x^k)\|$ and every $L_i^{k}$ to $\|\nabla^2\!f_{i}(x^k)\|$ from above, which has a great influence on the efficiency of algorithms to solve subproblem \eqref{subprob}. Since the spectral norms $\|\nabla^2\!f_0(x^k)\|$ and $\|\nabla^2\!f_{i}(x^k)\|$ are unknown in many scenarios, at each iteration, our MBA method incorporates an inner loop to adaptively search for their tight upper estimations $\lambda_{\rm min}(\mathcal{Q}_k)$ and $L^k$, and meanwhile solves the associated subproblem. Now the subproblems to solve at the $k$th iteration have the form
\begin{align}\label{subprobkj}
 &\min_{x\in\mathbb{R}^n} F_{k,j}(x):=\vartheta_{k,j}(x)+\phi(x)\nonumber\\
 &\ \ {\rm s.t.}\ x\in\Gamma\!_{k,j}:=[G(\cdot,x^k,V^k,L^{k,j})]^{-1}(\mathbb{R}_{-}^m),
\end{align}
where $\vartheta_{k,j}(x):=\langle\xi^k,x\!-\!x^k\rangle+\frac{1}{2}\langle x\!-\!x^k,\mathcal{Q}_{k,j}(x\!-\!x^k)\rangle+g_0(x^k)$ for $x\in\mathbb{R}^n$, and the PD linear operator $\mathcal{Q}_{k,j}:\mathbb{R}^n\to\mathbb{R}^n$ can be chosen by the specific expression of $g_0$.  

Since the exact solution of the subproblem \eqref{subprobkj}, denoted by $\overline{x}^{k,j}$, is unavailable due to computation error or cost. We follow the same line as in \cite{Nabou24} to seek an inexact one. For an inexact solution $y^{k,j}$ of \eqref{subprobkj}, it is natural to require its objective value $F_{k,j}(y^{k,j})$ to be less than $F_{k,j}(x^k)$, which is precisely equal to the current objective value $F(x^k)$ of problem \eqref{prob}, but such a restriction is not enough to achieve satisfactory convergence results due to the lack of multiplier information. To address this issue, similar to \cite{Nabou24}, we require that the KKT residual at $y^{k,j}$ w.r.t. some $(v^{k,j},\lambda^{k,j})\in\partial \phi(y^{k,j})\times\mathbb{R}_{+}^m$ can be controlled by the difference between $y^{k,j}$ and $x^k$. Notice that under a mild CQ, $\overline{x}^{k,j}$ is the unique optimal solution of \eqref{subprobkj} if and only if there exist  $\overline{\lambda}^{k,j}\!\in\mathcal{N}_{\mathbb{R}_{-}^m}(G(\overline{x}^{k,j},x^{k},V^{k},L^{k,j}))$ and $\overline{v}^{k,j}\in\partial\phi(\overline{x}^{k,j})$ such that $\nabla \vartheta_{k,j}(\overline{x}^{k,j})+\overline{v}^{k,j}+\nabla_{\!x} G(\overline{x}^{k,j},x^k,V^k,L^{k,j})\overline{\lambda}^{k,j}=0$. Inspired by this, we define the stationarity residual function $S_{k,j}\!:\mathbb{R}^n\times\mathbb{R}^n\times\mathbb{R}_{+}^m\to\mathbb{R}_{+}$ and the joint complementarity and feasibility violation function  $C_{k,j}:\mathbb{R}^n\times\mathbb{R}_{+}^m\to\mathbb{R}_{+}$ of \eqref{subprobkj} by
\begin{subequations}
 \begin{align}\label{resi-S}
 S_{k,j}(x,v,\lambda)&:=\big\|\nabla\vartheta_{k,j}(x)+v+\nabla\!_{x} G(x,x^k,V^k,L^{k,j})\lambda\big\|,\\
 C_{k,j}(x,\lambda)&:=\big(\!-\!\langle\lambda,G(x,x^k,V^k,L^{k,j})\rangle\big)_{\!+}+\big\|[G(x,x^k,V^k,L^{k,j})]_{+}\big\|_{\infty}.
 \label{resi-C}
 \end{align}
\end{subequations}
From the above discussions, we adopt the following inexactness criterion for $y^{k,j}$:
\begin{subequations}
 \begin{align}
 &F_{k,j}(y^{k,j})\le F_{k,j}(x^k),\ C_{k,j}(y^{k,j},\lambda^{k,j})\le\frac{\beta_C}{2}\|y^{k,j}\!-\!x^k\|^2,\label{inexact1}\\
 &S_{k,j}(y^{k,j},v^{k,j},\lambda^{k,j})\le\!\beta_{S}\|y^{k,j}\!-\!x^k\|.\label{inexact2}
 \end{align}
\end{subequations}

The iterations of our inexact MBA method are described in Algorithm \ref{iMBA}. Observe that $\vartheta_{k,j_k}=\vartheta_k, F_{k,j_k}=F_k$ and $\Gamma_{k,j_k}=\Gamma_k$. Therefore, to simplify the notation, we denote $S_{k,j_k}, C_{k,j_k}$ and $\overline{x}^{k,j_k}$ as $S_k, C_k$ and $\overline{x}^k$, respectively.

\begin{algorithm}
\caption{\label{iMBA}{\bf (Inexact MBA method for problem \eqref{prob})}}
\begin{algorithmic}
\STATE{\textbf{Input}: $0<\!\mu_{\min}\le\mu_{\rm max},0<\!L_{\min}\le\! L_{\max},M>0,\beta_C>0,\beta_S>0,\alpha>0,$ \hspace*{1.1cm} $\tau\!>1$ and an initial $x^0\in\Gamma$.}
\FOR{$k=0,1,2,\ldots$}
\STATE{1. Choose $\xi^k\in\partial g_0(x^k), V^k\in\partial g(x^k),\mu_{k,0}\in[\mu_{\min},\mu_{\max}],L^{k,0}\in[L_{\rm min},L_{\rm max}]$.}
\STATE{2. \textbf{for} $j=0,1,2,\ldots$ \textbf{do}
\begin{itemize}
 \item[(2a)] Choose a linear operator $\mathcal{Q}_{k,j}$ with $\mu_{k,j}\mathcal{I}\preceq \mathcal{Q}_{k,j}\preceq (\mu_{k,j}\!+\!M)\mathcal{I}$ to formulate the subproblem \eqref{subprobkj}, and then solve it to seek a vector $y^{k,j}$ with some $(v^{k,j},\lambda^{k,j})\in\partial\phi(y^{k,j})\times\mathbb{R}^m_+$  such that \eqref{inexact1}-\eqref{inexact2}.	
 
 \item [(2b)] If $g(y^{k,j})\in\mathbb{R}_{-}^m$ and $F(y^{k,j})\le F(x^k)-\frac{\alpha}{2}\|y^{k,j}-x^k\|^2$, set $j_k\!:=j$ and go to step 3; else if $g(y^{k,j})\notin\mathbb{R}_{-}^m$, go to step (2c); else go to step (2d).
			
 \item [(2c)] Set $L^{k,j+1}=\tau L^{k,j}$ and $\mu_{k,j+1}=\mu_{k,j}$.
			
 \item [(2d)] Set $L^{k,j+1}=L^{k,j}$ and $\mu_{k,j+1}=\tau\mu_{k,j}$.
 \end{itemize}
 \textbf{end for}}
 
\STATE{3. Let $(\mu_k,\mathcal{Q}_k,L^k,x^{k+1},v^{k+1},\lambda^{k+1})\!:=\!(\mu_{k,j_k},\mathcal{Q}_{k,j_k},L^{k,j_k},y^{k,j_k},v^{k,j_k},\lambda^{k,j_k})$.}\label{step3}
\ENDFOR
\end{algorithmic}
\end{algorithm}
\begin{remark}\label{remark-alg}
 {\bf (a)} Algorithm \ref{iMBA} is well defined by Lemma \ref{lemma-welldef} below. Its inner loop seeks tight upper estimations $\mu_k$ for $\|\nabla^2\!f_0(x^k)\|$ and $L_i^k$ for $\|\nabla^2\!f_i(x^k)\|$, and meanwhile, computes an inexact solution $y^{k,j}$ satisfying \eqref{inexact1}-\eqref{inexact2}. Our inexactness criterion \eqref{inexact1}-\eqref{inexact2} is a little weaker than the one proposed in \cite{Nabou24} for problem \eqref{prob} with $L$-smooth $g_i$ for $i\in[m]_{+}$. Indeed, the first inequality in \eqref{inexact1} coincides with \cite[Eq.(3.4)]{Nabou24}, but our complementarity violation in $C_{k,j}$ only involves that of the feasible constraints, so the second inequality in \eqref{inexact1} along with \eqref{inexact2} is weaker than the one in \cite[Assumption 4]{Nabou24}.

 \noindent
 {\bf(b)} The computation cost of Algorithm \ref{iMBA} at each iteration is not expensive despite involving an inner loop. Observe that each step of the inner loop does not involve seeking any element of $\partial g_i(x^{k,j})$ for $i\in[m]_{+}$, and when first-order algorithms are applied to solve the subproblems, their iterations just use the data from the outer loop, so the computation cost in the inner loop is cheap. Also, Lemma \ref{lemma-welldef} (ii) below shows that the number of steps in the inner loop can be quantified in terms of the logarithms of the upper-$\mathcal{C}^2$ constants of $g_i$ for $i\in[m]_{+}$ on a compact set.
	
 \noindent
 {\bf(c)} We claim that $x^{k}$ is a stationary point of \eqref{prob} in the sense of Definition \ref{spoint-def} once $x^{k}=x^{k+1}$. Indeed, if $x^{k}=x^{k+1}$, by the definition of $x^{k+1}$ in step 3, $y^{k,j_k}=x^k$. Together with the inexactness conditions \eqref{inexact1}-\eqref{inexact2}, $C_{k,j_k}(x^{k+1},\lambda^{k+1})=0$ and $S_{k,j_k}(x^{k+1},v^{k+1},\lambda^{k+1})=0$. From the expressions of $C_{k,j}$ and $S_{k,j}$ in \eqref{resi-S}-\eqref{resi-C},  
 \begin{subequations}
 \begin{align*}
 \nabla\vartheta_{k,j_k}(x^{k+1})+v^{k+1}+\nabla\!_{x} G(x^{k+1},x^{k},V^{k},L^{k})\lambda^{k+1}=0,\qquad\qquad\\
 \lambda^{k+1}\in\mathbb{R}_{+}^m,\,G(x^{k+1},x^{k},V^{k},L^{k})\in\mathbb{R}_{-}^m,\,\langle\lambda^{k+1},G(x^{k+1},x^{k},V^{k},L^{k})\rangle=0.
 \end{align*}
 \end{subequations}
 Then, using $x^{k+1}=x^k$ and the expressions of $\vartheta_{k,j_{k}}$ and $G(\cdot,x^{k},V^{k},L^{k})$ leads to
 \[
   0\in\xi^{k}+\partial\phi(x^{k})+V^{k}\lambda^{k+1}\ \ {\rm and}\ \ \lambda^{k+1}\in\mathcal{N}_{\mathbb{R}_{-}^m}(g(x^{k})).
 \]
 This, along with $\xi^{k}\in\partial g_0(x^{k})$ and $V^k\in\partial g(x^k)$, shows that $x^{k}$ (of course $x^{k+1}$) is a stationary point of problem \eqref{prob} in the sense of Definition \ref{spoint-def}. In view of this, the condition $\|x^{k+1}-x^{k}\|\le\epsilon_1$ for a tolerance $\epsilon_1$ is suggested as a stop-condition. Since Algorithm \ref{iMBA} produces at each iteration a feasible point $x^k\in\Gamma$ and a nonnegative $\lambda^k$, one can terminate its iteration once $[-\langle\lambda^k,g(x^k)\rangle]_{+}\le\epsilon_2$ for a tolerance $\epsilon_2$. 
\end{remark}

To ensure that Algorithm \ref{iMBA} is well defined, we need the following assumption. 
\begin{assumption}\label{ass2}
 For each $k,j\in\mathbb{N}$, the mapping $\mathcal{G}_{k,j}(\cdot):=G(\cdot,x^k,V^k,L^{k,j})-\mathbb{R}_{-}^m$ is subregular at $(\overline{x}^{k,j},0)$, where $\overline{x}^{k,j}$ is the unique optimal solution of \eqref{subprobkj}. 
\end{assumption}

Although Assumption \ref{ass2} is made for the subproblems, following the proof of \cite[Proposition 2.1 (iii)]{Auslender10}, one can verify that the MFCQ at $x^k$ for \eqref{prob} implies the Slater's CQ for \eqref{subprobkj}, which further implies Assumption \ref{ass2}. From the discussion after \eqref{mapFH}, it is actually the weakest CQ for the multiplier set of \eqref{subprobkj} to be nonempty since, for each $k,j\in\mathbb{N}$, the components of $G(\cdot,x^k,V^k,L^{k,j})$ are smooth and convex. 
\begin{lemma}\label{lemma-welldef}
Under Assumption \ref{ass2}, if $x^k\in\Gamma$ is a non-stationary point, then 
\begin{itemize}
\item[(i)] the inexactness conditions \eqref{inexact1}-\eqref{inexact2} are well-defined; 

\item[(ii)] the inner loop stops once $j>\lceil\max\limits_{i\in[m]}\frac{\log[(\beta_{C}+\rho_i^k)/L_i^{k,0}]}{\log\tau}\rceil+\frac{\log[(\alpha+\rho_0^k)/\mu_{k,0}]}{\log\tau}$ where, for each $i\in[m]_+$, $\rho_i^k>0$ is a constant depending only on $g_i$ and the set 
\[
X_k\!:=\big\{x\in\mathbb{R}^n\,|\,({\mu_{\rm min}}/{2})\|x-x^k\|^2+\langle\xi^k,x-x^k\rangle+\phi(x)\le\phi(x^k)\big\}. 
\]
\end{itemize}
Hence, Algorithm \ref{iMBA} is well-defined with $\{x^k\}_{k\in\mathbb{N}}\subset\Gamma$ and $x^k\in\Gamma_{k,j}$ for all $j\in\mathbb{N}$.
\end{lemma}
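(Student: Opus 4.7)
The plan is to exhibit the exact subproblem minimizer as a witness that the inexactness system \eqref{inexact1}-\eqref{inexact2} is satisfiable for part (i), and then to force termination of the $j$-loop in part (ii) by showing that once $L^{k,j}$ and $\mu_{k,j}$ exceed thresholds expressed through the upper-$\mathcal{C}^2$ moduli of $g_i$ on $X_k$, the acceptance test in step~(2b) must trigger.

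For part (i), I would note first that the strongly convex objective $\vartheta_{k,j}+\phi$ over the nonempty ($x^k$ lies in it because $G(x^k,x^k,V^k,L^{k,j})=g(x^k)\in\mathbb{R}_{-}^m$) closed convex set $\Gamma_{k,j}$ admits a unique minimizer $\overline{x}^{k,j}$. Assumption~\ref{ass2} is precisely the subregularity CQ required to convert $0\in\partial F_{k,j}(\overline{x}^{k,j})+\mathcal{N}_{\Gamma_{k,j}}(\overline{x}^{k,j})$ into a KKT system with multipliers $\overline{v}^{k,j}\in\partial\phi(\overline{x}^{k,j})$ and $\overline{\lambda}^{k,j}\in\mathcal{N}_{\mathbb{R}_{-}^m}(G(\overline{x}^{k,j},x^k,V^k,L^{k,j}))$, at which $S_{k,j}(\overline{x}^{k,j},\overline{v}^{k,j},\overline{\lambda}^{k,j})=0$ and $C_{k,j}(\overline{x}^{k,j},\overline{\lambda}^{k,j})=0$; the optimality $F_{k,j}(\overline{x}^{k,j})\le F_{k,j}(x^k)$ is immediate. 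Hence this triple certifies \eqref{inexact1}-\eqref{inexact2}.

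For part (ii), I would start by extracting the consequence $y^{k,j}\in X_k$ from the first inequality in \eqref{inexact1}: cancelling $g_0(x^k)$ from $F_{k,j}(y^{k,j})\le F_{k,j}(x^k)$ and using $\mathcal{Q}_{k,j}\succeq\mu_{\min}\mathcal{I}$ yields exactly the defining inequality of $X_k$. Being the sublevel set of a strongly convex function, $X_k$ is nonempty, closed, convex and bounded, hence compact. Applying Lemma~\ref{lemma-major}(ii) to each $g_i$ with $i\in[m]$ on $X_k$ produces $\rho_i^k>0$ such that
\[
 g_i(y^{k,j})-G_i(y^{k,j},x^k,V^k,L^{k,j})\le\tfrac{\rho_i^k-L_i^{k,j}}{2}\|y^{k,j}-x^k\|^2,
\]
and combining with the bound $\|[G(y^{k,j},x^k,V^k,L^{k,j})]_+\|_\infty\le(\beta_C/2)\|y^{k,j}-x^k\|^2$ supplied by \eqref{resi-C} and \eqref{inexact1} gives $g_i(y^{k,j})\le\tfrac{\beta_C+\rho_i^k-L_i^{k,j}}{2}\|y^{k,j}-x^k\|^2$, which is nonpositive as soon as $L_i^{k,j}\ge\beta_C+\rho_i^k$. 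Therefore branch~(2c) can fire at most $\lceil\max_{i\in[m]}\log_\tau[(\beta_C+\rho_i^k)/L_i^{k,0}]\rceil$ times before $g(y^{k,j})\in\mathbb{R}_{-}^m$. Once feasibility is secured, $y^{k,j}\in\Gamma\subset\mathcal{O}$, so Lemma~\ref{lemma-major}(ii) furnishes a constant $\rho_0^k>0$ for $g_0$ on a compact convex subset of $\mathcal{O}$ capturing all such iterates; combining $g_0(y^{k,j})\le g_0(x^k)+\langle\xi^k,y^{k,j}-x^k\rangle+(\rho_0^k/2)\|y^{k,j}-x^k\|^2$ with the first inequality in \eqref{inexact1} and $\mathcal{Q}_{k,j}\succeq\mu_{k,j}\mathcal{I}$ gives $F(y^{k,j})\le F(x^k)+\tfrac{\rho_0^k-\mu_{k,j}}{2}\|y^{k,j}-x^k\|^2$, whose right-hand side drops below $F(x^k)-(\alpha/2)\|y^{k,j}-x^k\|^2$ as soon as $\mu_{k,j}\ge\rho_0^k+\alpha$, so branch~(2d) can fire at most $\log_\tau[(\rho_0^k+\alpha)/\mu_{k,0}]$ times. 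Adding the two bounds recovers the announced iteration count, and $\{x^k\}_{k\in\mathbb{N}}\subset\Gamma$ together with $x^k\in\Gamma_{k,j}$ follow respectively from the acceptance test in (2b) and from $G(x^k,x^k,V^k,L^{k,j})=g(x^k)\in\mathbb{R}_{-}^m$.

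The most delicate point will be producing a single upper-$\mathcal{C}^2$ modulus for $g_0$ that is valid uniformly across inner-loop iterates, since $X_k$ is not a priori contained in the open set $\mathcal{O}$ on which $g_0$ is upper-$\mathcal{C}^2$. This is handled by deploying the $g_0$ estimate only after feasibility has been verified, so that the relevant iterate $y^{k,j}$ already lies in $\Gamma\subset\mathcal{O}$; a single compact convex subset of $\mathcal{O}$ containing the finitely many such iterates (together with $x^k$) then supports a uniform application of Lemma~\ref{lemma-major}(ii).
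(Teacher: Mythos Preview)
Your approach matches the paper's. For part~(i) you exhibit the exact KKT triple as a witness; the paper does the same but additionally uses the non-stationarity of $x^k$ to conclude $\overline{x}^{k,j}\ne x^k$, obtaining \emph{strict} inequalities and then invoking continuity together with the outer semicontinuity of $\partial\phi$ to produce nearby inexact points that also satisfy \eqref{inexact1}--\eqref{inexact2}---this is what makes the criterion implementable by an iterative subsolver rather than merely satisfiable, though for the bare existence claim your witness already suffices.

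For part~(ii) your structure is identical to the paper's, but the closing appeal to ``the finitely many such iterates'' is circular: finiteness of the inner loop is precisely what you are proving, so you cannot choose $D$ as the convex hull of those iterates after the fact. The correct move---which your earlier phrase ``capturing all such iterates'' already gestures at and which the paper carries out---is to take the \emph{fixed} set $X_k\cap\Gamma\subset\mathcal{O}$ (strictly speaking its convex hull, since Lemma~\ref{lemma-major}(ii) requires $D$ convex and $\Gamma$ need not be); every feasible inner iterate $y^{k,j}$ lies there a priori because $y^{k,j}\in X_k$ by \eqref{inexact1} and $y^{k,j}\in\Gamma$ by the branch structure of step~(2b), so Lemma~\ref{lemma-major}(ii) on this $D$ yields a single $\rho_0^k$ depending only on $g_0$ and $X_k$, as the statement requires.
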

\begin{proof}
From $x^k\in\Gamma$ and $G(x^k,x^k,V^k,L^{k,j})=g(x^k)$, the set $\Gamma_{k,j}$ is nonempty. Along with the strong convexity of $F_{k,j}$,  the optimal solution $\overline{x}^{k,j}$ of \eqref{subprobkj} exists. 
	
\noindent
{\bf(i)} Fix any $j\in\mathbb{N}$. To prove that the conditions \eqref{inexact1}-\eqref{inexact2} are well-defined, it suffices to consider $\overline{x}^{k,j}\neq x^k$. If not, $x^{k}$ is an optimal solution of \eqref{subprobkj}, so we get
\[
0\in\partial F_{k,j}(x^k)+\mathcal{N}_{\Gamma_{k,j}}(x^k)=\xi^k+\partial\phi(x^k)+\mathcal{N}_{\Gamma_{k,j}}(x^k).
\]
In view of Assumption \ref{ass2}, $\mathcal{N}_{\Gamma_{k,j}}(x^k)=\nabla_{\!x} G(x^{k},x^k,V^k,L^{k,j})\mathcal{N}_{\mathbb{R}_{-}^m}(G(x^{k},x^k,V^k,L^{k,j}))$. Note that $\nabla\!_{x}G(x^{k},x^k,V^k,L^{k,j})=V^k$ and  $G(x^{k},x^k,V^k,L^{k,j})=g(x^k)$. Then,  
\[
 0\in\xi^k+\partial\phi(x^k)+V^k\mathcal{N}_{\mathbb{R}_{-}^m}(g(x^{k})). 
\]
This by Definition \ref{spoint-def} shows that $x^k$ is a stationary point of \eqref{prob}, which is impossible. 
Now from the optimality condition of \eqref{subprobkj} at $\overline{x}^{k,j}$, $0\in\partial F_{k,j}(\overline{x}^{k,j})+\mathcal{N}_{\Gamma_{k,j}}(\overline{x}^{k,j})$ which, by the expression of $F_{k,j}$ and the above arguments, can equivalently be written as
\[
 0\in\nabla\vartheta_{k,j}(\overline{x}^{k,j})+\partial\phi(\overline{x}^{k,j})+\nabla_{\!x}G(\overline{x}^{k,j},x^k,V^k,L^{k,j})\mathcal{N}_{\mathbb{R}_{-}^m}(G(\overline{x}^{k,j},x^k,V^k,L^{k,j})). 
\]
Then, there exist  $\overline{v}^{k,j}\in\partial\phi(\overline{x}^{k,j})$ and  $\overline{\lambda}^{k,j}\in\mathbb{R}^m$ such that $(\overline{x}^{k,j},\overline{v}^{k,j},\overline{\lambda}^{k,j})$ satisfies  
\begin{subequations}
\begin{align}\label{KKT1-subprob}
 \nabla\vartheta_{k,j}(\overline{x}^{k,j})+\overline{v}^{k,j}+\nabla_{\!x}G(\overline{x}^{k,j},x^k,V^k,L^{k,j})\overline{\lambda}^{k,j}=0,\qquad\qquad\\
 \label{KKT2-subprob}
 \overline{\lambda}^{k,j}\in\mathbb{R}_{+}^m,\,G(\overline{x}^{k,j},x^k,V^k,L^{k,j})\in\mathbb{R}_{-}^m,\,\langle \overline{\lambda}^{k,j},G(\overline{x}^{k,j},x^k,V^k,L^{k,j})\rangle=0. 
\end{align}
\end{subequations} 
From \eqref{KKT1-subprob}-\eqref{KKT2-subprob} and $\overline{x}^{k,j}\ne x^k$, we have $S_{k,j}(\overline{x}^{k,j},\overline{v}^{k,j},\overline{\lambda}^{k,j})=0<\beta_{S}\|\overline{x}^{k,j}-x^k\|$ and $C_{k,j}(\overline{x}^{k,j},\overline{\lambda}^{k,j})=0<\frac{\beta_{C}}{2}\|\overline{x}^{k,j}-x^k\|^2$. Recall that $\overline{v}^{k,j}\in\partial\phi(\overline{x}^{k,j})$ and $\partial\phi$ is osc at $\overline{x}^{k,j}$. There is a sequence $\{(x^{l},v^{l})\}_{l\in\mathbb{N}}\subset\mathbb{R}^n\times\partial\phi(x^{l})$ with $(x^{l},v^{l})\to(\overline{x}^{k,j},\overline{v}^{k,j})$. Note that $F_{k,j}(\overline{x}^{k,j})<F_{k,j}(x^k)$. By the continuity of $F_{k,j}, S_{k,j}(\cdot,\cdot,\overline{\lambda}^{k,j})$ and $C_{k,j}(\cdot,\overline{\lambda}^{k,j})$, there exist $x$ close to $\overline{x}^{k,j}$ and $v\in\partial\phi(x)$ such that $(x,v,\overline{\lambda}^{k,j})$ satisfies \eqref{inexact1}-\eqref{inexact2}. 

\noindent
{\bf(ii)} Obviously, $x^k\in X_k$. From the first inequality of \eqref{inexact1}, the expression of $F_{k,j}$ and $\mathcal{Q}_{k,j}\succeq\mu_{\min}\mathcal{I}$, it is easy to check that $\{y^{k,j}\}_{j\in\mathbb{N}}\subset X_k$. 
By Assumption \ref{ass1} (ii), there exists $\widehat{x}\in\Gamma$ and $\widehat{\zeta}\in\partial\phi(\widehat{x})$ such that $\phi(x)\ge\phi(\widehat{x})+\langle\widehat{\zeta},x-\widehat{x}\rangle$ for all $x\in\mathbb{R}^n$. Then, the function $\mathbb{R}^n\ni x\mapsto\frac{\mu_{\rm min}}{2}\|x-x^k\|^2+\langle\xi^k,x-x^k\rangle+\phi(x)$ is coercive, which implies that the set $X_k$ is compact.
By Assumption \ref{ass1} and Lemma \ref{lemma-major} (ii) with $\mathcal{O}'=\mathbb{R}^n,D=X_k$, for each $i\in[m]$, there exists $\rho_i^k>0$ such that for all $x,y\in X_k$ and $\zeta^{i}\in\overline{\partial}g_i(x)$,
\begin{equation}\label{quant-0}
 g_i(y)\le g_i(x)+\langle\zeta^{i},y-x\rangle+({\rho_i^k}/{2})\|y-x\|^2.
\end{equation}
For each $j\in\mathbb{N}$, combining $\|[G(y^{k,j},x^k,V^k,L^{k,j})]_+\|_{\infty}\le C_{k,j}$ with the second inequality of \eqref{inexact1} yields that $g_i(x^k)+ \langle V_{i}^k,y^{k,j}-x^k\rangle+\frac{L^{k,j}_i-\beta_{C}}{2}\|y^{k,j}-x^k\|^2\le0$ for all $i\in[m]$. Now for each $i\in[m]$, 
from \eqref{quant-0} the following inequality holds for any $j\in\mathbb{N}$
\begin{align*}
 g_i(y^{k,j})\le g_i(x^k)\!+\!\langle V_{i}^k,y^{k,j}\!-\!x^k\rangle\!+\!\frac{\rho_i^k}{2}\|y^{k,j}\!-\!x^k\|^2
 \le\frac{1}{2}[\rho_i^k\!-\!(L^{k,j}_i\!-\!\beta_{C})]\|y^{k,j}\!-\!x^k\|^2,
\end{align*}
so $y^{k,j}\in\Gamma$ once $\beta_{C}+\rho_i^k\le L^{k,j}_i=\tau^{j}L_i^{k,0}$. Then, the first inequality in step (2b) holds as long as $j>j_1:=\lceil\max\limits_{i\in[m]}\frac{\log(\beta_{C}+\rho_i^k)-\log L_i^{k,0}}{\log\tau}\rceil$. For each $j\ge j_1$, since $y^{k,j},x^k\in\Gamma\cap X_k$, applying Lemma \ref{lemma-major} (ii) with $\mathcal{O}'=\mathcal{O},D=X_k\cap\Gamma$, there exists $\rho_0^k>0$ such that 
\begin{equation*}
 g_0(y^{k,j})\le g_0(x^k)+\langle\xi^k,y^{k,j}-x^k\rangle+({\rho_0^k}/{2})\|y^{k,j}-x^k\|^2.
\end{equation*}  
Together with the expressions of $F$ and $F_{k,j}$, it follows that for each $j\ge j_1$, 
\begin{align}\label{inner-f}  F(y^{k,j})&=g_0(y^{k,j})+\phi(y^{k,j})\le g_0(x^k)+\langle\xi^k,y^{k,j}-x^k\rangle+\frac{\rho_0^k}{2}\|y^{k,j}-x^k\|^2+\phi(y^{k,j})\nonumber\\
  &=F_{k,j}(y^{k,j})+\frac{\rho_0^k}{2}\|y^{k,j}\!-\!x^k\|^2-\frac{1}{2}\langle y^{k,j}\!-\!x^k,\mathcal{Q}_{k,j}(y^{k,j}\!-\!x^k)\rangle\nonumber\\
  &\stackrel{\eqref{inexact1}}{\le} F(x^k)-\frac{\alpha}{2}\|y^{k,j}-x^k\|^2+\frac{\rho_0^k+\alpha-\tau^{j-j_1}\mu_{k,0}}{2}\|y^{k,j}-x^k\|^2,
 \end{align}
 where the second inequality is due to $\mathcal{Q}_{k,j}\succeq\mu_{k,j}\mathcal{I}=\tau^{j-j_1}\mu_{k,j_1}\mathcal{I}\succeq\tau^{j-j_1}\mu_{k,0}\mathcal{I}$. Thus, the two inequalities in step (2b) hold when $\rho_0^k\!+\!\alpha\le \tau^{j-j_1}\mu_{k,0}$ or equivalently $j>\frac{\log(\rho_0^k+\alpha)-\log\mu_{k,0}}{\log\tau}+j_1$. That is, the inner loop stops once $j>\frac{\log(\rho_0^k+\alpha)-\log\mu_{k,0}}{\log\tau}+j_1$. 

Notice that Algorithm \ref{iMBA} starts with $x^0\in\Gamma$, and the inner loop returns the point $y^{k,j_k}=x^{k+1}\in\Gamma$. Then, the arguments by induction can prove that for each $k\in\mathbb{N}$,  the subproblem \eqref{subprobkj} and the conditions \eqref{inexact1}-\eqref{inexact2} are well-defined. Thus, Algorithm \ref{iMBA} is well-defined with $\{x^k\}_{k\in\mathbb{N}}\subset\Gamma$ and $x^k\in\Gamma_{k,j}$ for all $j\in\mathbb{N}$. 
\end{proof}

From the iterations of the outer loop in Algorithm \ref{iMBA}, it is immediate to have 
\begin{equation}\label{F-decrease}
 F(x^{k+1})\le F(x^k)-({\alpha}/{2})\|x^{k+1}-x^k\|^2\quad\ \forall k\in\mathbb{N}.
\end{equation}
Then, the lower boundedness of $F$ on $\Gamma$ by Assumption \ref{ass1} (ii) implies the convergence of $\{F(x^k)\}_{k\in\mathbb{N}}$, so $\sum_{k=0}^{\infty}\|x^{k+1}\!-x^k\|^2<\infty$ follows. 
That is, the following result holds.
\begin{corollary}\label{corollary-objF}
Under Assumption \ref{ass2}, the above \eqref{F-decrease} holds for all $k\in\mathbb{N}$, so the sequence $\{F(x^k)\}_{k\in\mathbb{N}}$ converges to some $\varpi^*\in\mathbb{R}$ and $\sum_{k=0}^{\infty}\|x^{k+1}\!-x^k\|^2<\infty$.
\end{corollary}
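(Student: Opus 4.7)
The proof will be a direct consequence of the exit rule of the inner loop of Algorithm \ref{iMBA} together with Assumption \ref{ass1}(ii). The plan is, first, to extract the descent inequality \eqref{F-decrease} from the algorithmic design; second, to deduce convergence of $\{F(x^k)\}$ via monotonicity plus a lower bound; and third, to obtain square-summability of the successive differences by a standard telescoping argument.

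To establish \eqref{F-decrease}, I would invoke Lemma \ref{lemma-welldef}, which certifies that for each $k\in\mathbb{N}$ the inner loop terminates at a finite index $j_k$ with $x^{k+1}=y^{k,j_k}\in\Gamma$. By the exit condition in step (2b), the inner loop can only stop when both $g(y^{k,j_k})\in\mathbb{R}_{-}^m$ and $F(y^{k,j_k})\le F(x^k)-(\alpha/2)\|y^{k,j_k}-x^k\|^2$ hold; substituting $x^{k+1}$ for $y^{k,j_k}$ yields \eqref{F-decrease} for every $k\in\mathbb{N}$. In particular $\{F(x^k)\}_{k\in\mathbb{N}}$ is nonincreasing, and since $\{x^k\}\subset\Gamma$ by Lemma \ref{lemma-welldef} and $F$ is bounded from below on $\Gamma$ by Assumption \ref{ass1}(ii), the monotone convergence theorem delivers a limit $\varpi^*\in\mathbb{R}$.

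For the square-summability, I would telescope \eqref{F-decrease}: summing the inequality $(\alpha/2)\|x^{k+1}-x^k\|^2\le F(x^k)-F(x^{k+1})$ over $k=0,1,\ldots,N$ gives $(\alpha/2)\sum_{k=0}^{N}\|x^{k+1}-x^k\|^2\le F(x^0)-F(x^{N+1})\le F(x^0)-\varpi^*$, and letting $N\to\infty$ produces $\sum_{k=0}^{\infty}\|x^{k+1}-x^k\|^2<\infty$. There is no substantive obstacle in this corollary: the entire argument is the classical monotone-plus-bounded-below-plus-telescoping template, and the nontrivial work (finite termination of the inner loop, feasibility of every $x^k$, and the strict functional decrease) has already been done inside Lemma \ref{lemma-welldef} and the exit rule of Algorithm \ref{iMBA}.
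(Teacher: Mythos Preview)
Your proposal is correct and matches the paper's own argument essentially line for line: the paper simply notes that \eqref{F-decrease} is immediate from the exit rule of the outer loop, then invokes the lower boundedness of $F$ on $\Gamma$ from Assumption \ref{ass1}(ii) to get convergence of $\{F(x^k)\}$, and concludes $\sum_k\|x^{k+1}-x^k\|^2<\infty$ by telescoping. You have spelled out the same steps with a bit more detail (explicitly citing Lemma \ref{lemma-welldef} for feasibility and finite inner-loop termination), which is fine.
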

\section{Convergence analysis}\label{sec4}

This section is dedicated to the convergence analysis of Algorithm \ref{iMBA} under Assumption \ref{ass3}, a common one for analyzing convergence of iterate sequences. Since $\{x^k\}_{k\in\mathbb{N}}\subset\mathcal{L}_{F(x^0)}\!:=\{x\in\mathbb{R}^n\,|\,F(x)\le F(x^0)\}$ by \eqref{F-decrease}, this assumption automatically holds if the level set $\mathcal{L}_{F(x^0)}$ is bounded. 
\begin{assumption}\label{ass3}
$\{x^k\}_{k\in\mathbb{N}}$ is bounded, and denote its  cluster point set as $\omega(x^0)$.
\end{assumption}

The following two technical lemmas are frequently used in the subsequent analysis. Lemma \ref{lemma-bound} verifies the boundedness of the sequence $\{(\xi^k,\overline{x}^k,V^k,L^k,\mathcal{Q}_k)\}_{k\in\mathbb{N}}$, and Lemma \ref{lemma-xbark} states that for large enough $k$ the iterates $x^k$ and $x^{k+1}$ are close to $\overline{x}^k$. 
\begin{lemma}\label{lemma-bound}
 Under Assumptions \ref{ass2}-\ref{ass3}, the following  statements hold true. 
 \begin{itemize}
 \item[(i)] The sequence $\{(\overline{x}^k,\xi^k,V^k)\}_{k\in\mathbb{N}}$ is bounded.
		
 \item[(ii)] There exists $\beta_{L}>0$ such that $\|L^k\|\le\beta_{L}$ for all $k$.
		
 \item[(iii)] $\{\mu_k\}_{k\in\mathbb{N}}$ is bounded, so there is $\beta_{\mathcal{Q}}>0$ such that $\|\mathcal{Q}_k\|\!\le\beta_{\mathcal{Q}}$ for all $k\in\mathbb{N}$. 
\end{itemize}
\end{lemma}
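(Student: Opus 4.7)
The plan is to deduce all three bounds from a single geometric observation: by strong convexity of $F_{k,j}$ with modulus at least $\mu_{\rm min}$, the inequality $F_{k,j_k}(\overline{x}^k)\le F_{k,j_k}(x^k)=F(x^k)$ forces $\overline{x}^k$ (and, along the inner loop, every $y^{k,j}$) into the sublevel set $X_k$ defined in Lemma~\ref{lemma-welldef}, and uniform boundedness of $\{x^k\}$, $\{\xi^k\}$, $\{\phi(x^k)\}$ will imply that $\bigcup_{k\in\mathbb{N}}X_k$ lies in a single compact $K\subset\mathbb{R}^n$. Once such a $K$ is isolated, parts (ii) and (iii) follow by re-running the quantitative estimates in the proof of Lemma~\ref{lemma-welldef}(ii) with the $k$-dependent moduli $\rho_i^k$ replaced by uniform ones obtained from Lemma~\ref{lemma-major}(ii) applied on $K$.

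For (i), since $\{x^k\}\subset\Gamma$ is bounded, its closure is a compact subset of $\Gamma$, and the local boundedness of $\partial g_0$ and $\partial g$ at every point of $\Gamma$ (Remark~\ref{remark-subdiff}(b)), combined with a standard covering argument, yields $\sup_k(\|\xi^k\|+\|V^k\|)<\infty$. To bound $\{\overline{x}^k\}$, I fix $\widehat{x}\in\mathbb{R}^n$ and $\widehat{\zeta}\in\partial\phi(\widehat{x})$ (available by convexity of $\phi$), insert the affine minorant $\phi(x)\ge\phi(\widehat{x})+\langle\widehat{\zeta},x-\widehat{x}\rangle$ into the defining inequality of $X_k$, and rearrange into an inequality of the form $(\mu_{\rm min}/2)\|\overline{x}^k-x^k\|^2\le A_k+B_k\|\overline{x}^k-x^k\|$ whose coefficients depend only on $\sup_k\|x^k\|$, $\sup_k\|\xi^k\|$, $\sup_k|\phi(x^k)|$ and on $\widehat{x},\widehat{\zeta}$. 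Solving this quadratic inequality gives a uniform radius $R$ with $\|\overline{x}^k-x^k\|\le R$, and the same computation (applied to any point of $X_k$) shows $\bigcup_k X_k$ lies in a common closed ball.

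For (ii) and (iii), set $K:=\overline{\bigcup_k X_k}$, compact by the previous step. Since every $g_i$ for $i\in[m]$ is upper-$\mathcal{C}^2$ on $\mathbb{R}^n$, Lemma~\ref{lemma-major}(ii) applied to $g_i$ on $K$ produces a uniform constant $\rho_i$; re-running the estimate in the proof of Lemma~\ref{lemma-welldef}(ii) with $\rho_i^k$ replaced by $\rho_i$ shows that step~(2c) can no longer fire once $L_i^{k,j}\ge\beta_{C}+\rho_i$ for every $i\in[m]$, which bounds $L^k$ uniformly in terms of $L_{\max},\tau,\beta_C$ and $\max_{i\in[m]}\rho_i$, yielding (ii). For (iii), take $K':=K\cap\Gamma$, a compact subset of $\mathcal{O}$; Lemma~\ref{lemma-major}(ii) applied to $g_0$ on $K'$ provides a uniform $\rho_0$, and the same re-inspection shows that step~(2d) can no longer fire once feasibility is secured (after $L^k$ has stabilized) and $\mu_{k,j}\ge\alpha+\rho_0$. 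Consequently $\mu_k\le\max\{\mu_{\max},\tau(\alpha+\rho_0)\}$, and the bound $\mathcal{Q}_k\preceq(\mu_k+M)\mathcal{I}$ from step~(2a) delivers the desired $\beta_{\mathcal{Q}}$.

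The main obstacle is precisely this uniformity issue: Lemma~\ref{lemma-welldef}(ii) bounds the inner loop through moduli $\rho_i^k$ tied to the $k$-dependent compact set $X_k$, and a priori these could blow up along the outer iteration. The technical heart of the argument is therefore the reduction of the moving sets $X_k$ to a single absorbing compact $K$, which is why part (i) must be established first (to bootstrap the uniform control of $\xi^k$) and why the fixed affine minorant of $\phi$ plays an essential role in propagating boundedness uniformly in $k$.
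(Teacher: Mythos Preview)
Your argument is correct in spirit and takes a genuinely different route from the paper's proof. The paper argues parts (ii) and (iii) by contradiction: it supposes $\{L^k\}$ (resp.\ $\{\mu_k\}$) is unbounded, extracts a subsequence along which $x^k\to x^*$, invokes the \emph{local} upper-$\mathcal{C}^2$ estimate of Lemma~\ref{lemma-major}(i) on a small ball around $x^*$, and derives that the last inner step before the final scaling already satisfied feasibility (resp.\ sufficient decrease), a contradiction. Your approach is direct: you first absorb all the sets $X_k$ into one compact set, invoke the \emph{global} upper-$\mathcal{C}^2$ estimate of Lemma~\ref{lemma-major}(ii) to obtain moduli $\rho_i$ independent of $k$, and then read off explicit uniform thresholds ($L_i^{k,j}\ge\beta_C+\rho_i$ and $\mu_{k,j}\ge\alpha+\rho_0$) beyond which steps (2c) and (2d) cannot fire. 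This yields explicit bounds on $L^k$ and $\mu_k$ rather than mere boundedness, at the cost of relying on the somewhat heavier Lemma~\ref{lemma-major}(ii). The paper's contradiction argument is lighter because it only needs the local estimate and never has to manufacture a single absorbing compact set.

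One small technical point to fix: Lemma~\ref{lemma-major}(ii) is stated for a compact \emph{convex} set $D$, but you invoke it on $K=\overline{\bigcup_k X_k}$ and on $K'=K\cap\Gamma$, neither of which is convex in general. The fix is immediate and you essentially already have it: since you showed $\bigcup_k X_k$ lies in a common closed ball $B$, take $K:=B$ for part~(ii); and for part~(iii), replace $K'$ by the closed convex hull of $K\cap\Gamma$, which is still a compact subset of the convex open set $\mathcal{O}$. With this adjustment your argument goes through.
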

\begin{proof} 
 {\bf (i)} From $\xi^k\in\partial g_0(x^k)$ and $V^k\in\partial g(x^k)$, the boundedness of $\{(\xi^k,V^k)\}_{k\in\mathbb{N}}$ follows $\{x^k\}_{k\in\mathbb{N}}\subset\Gamma$ and Remark \ref{remark-subdiff} (b). Next we prove that $\{\overline{x}^k\}_{k\in\mathbb{N}}$ is bounded. For each $k\in\mathbb{N}$, using the expression of $F_k$ and $\mathcal{Q}_{k}\succeq\mu_{\rm min}\mathcal{I}$ leads to 
 \[
  g_0(x^k)+\langle\xi^k,\overline{x}^k\!-\!x^k\rangle +\frac{\mu_{\rm min}}{2}\|\overline{x}^k\!-\!x^k\|^2+\phi(\overline{x}^k)\le F_k(\overline{x}^k)\le F(x^k)\stackrel{\eqref{F-decrease}}{\le} F(x^0),
 \]
 where the second inequality is due to $F_k(\overline{x}^k)\le F_k(x^k)=F(x^k)$. By Assumption \ref{ass1} (ii), there exists $v^0\in\partial\phi(x^0)$ such that $\phi(\overline{x}^k)\ge\phi(x^0)+\langle v^0,\overline{x}^k-x^0\rangle$. Then, 
 \[
  g_0(x^k)+\langle\xi^k,\overline{x}^k\!-\!x^k\rangle +({\mu_{\rm min}}/{2})\|\overline{x}^k\!-\!x^k\|^2+\phi(x^0)+\langle v^0,\overline{x}^k-x^0\rangle\le  F(x^0).
 \] 
 This, by the boundedness of $\{(x^k,\xi^k)\}_{k\in\mathbb{N}}$ and $\{g_0(x^k)\}_{k\in\mathbb{N}}$, implies that of $\{\overline{x}^k\}_{k\in\mathbb{N}}$. 
	
\noindent
{\bf (ii)} 
 Suppose on the contrary that $\{L^k\}_{k\in\mathbb{N}}\subset\mathbb{R}_{++}^m$ is unbounded. For each $k\in\mathbb{N}$, let $\widehat{L}^k:=L^k/\tau$. Clearly, $\lim_{k\to\infty}\|\widehat{L}^k\|_{\infty}=\infty$. From steps (2b)-(2c), there exist an infinite index $\mathcal{K}$, an index $i_1\in[m]$ such that  $\|\widehat{L}^k\|_{\infty}=\widehat{L}_{i_1}^k$ for all $k\in\mathcal{K}$, and an index $1<\widehat{j}_k<j_k$ such that $\widehat{L}^k=L^{k,\widehat{j}_k}$ and $g_{i_1}(y^{k,\widehat{j}_k})>0$ for all $k\in\mathcal{K}$. Since $\{x^k\}_{k\in\mathcal{K}}\subset\Gamma$ is bounded, if necessary by taking a subsequence, we can assume  $\lim_{\mathcal{K}\ni k\to\infty}x^k=x^*\in\Gamma$. Applying Lemma \ref{lemma-major} (i) to the function $g_{i_1}$ with $x=x^*$, there exist $\delta>0$ and $\rho_{i_1}^{*}>0$  such that for any $x',x''\in\mathbb{B}(x^*,\delta)$ and $\zeta'\in\overline{\partial} g_{i_1}(x')$,
\begin{equation}\label{upperC2}
 g_{i_1}(x'')\le g_{i_1}(x')+\langle\zeta',x''-x'\rangle+({\rho_{i_1}^*}/{2})\|x''-x'\|^2.
\end{equation}
For each $k\in\mathcal{K}$, since $\widehat{L}^k=L^{k,\widehat{j}_k}$ and  $y^{k,\widehat{j}_k}$ is an inexact minimizer of \eqref{subprobkj} with $j=\widehat{j}_k$ satisfying \eqref{inexact1}, from the expression of the function $C_{k,\widehat{j}_k}$ it follows
\begin{align}\label{Lk-temp2}    
0&\ge\|[G(y^{k,\widehat{j}_k},x^k,V^k,\widehat{L}^k)]_{+}\|_{\infty}-
    ({\beta_{C}}/{2})\|y^{k,\widehat{j}_k}-x^k\|^2\nonumber\\
&\ge [G_{i_1}(y^{k,\widehat{j}_k},x^k,V^k,\widehat{L}^k)]_{+}-
    ({\beta_{C}}/{2})\|y^{k,\widehat{j}_k}-x^k\|^2\nonumber\\
&\ge g_{i_1}(x^k)+ \langle V_{i_1}^k,y^{k,\widehat{j}_k}-x^k\rangle+[({\widehat{L}^k_{i_1}-\beta_{C}})/{2}]\|y^{k,\widehat{j}_k}-x^k\|^2.
\end{align}
Since $\{V_{i_1}^k\}_{k\in\mathcal{K}}$ is bounded by part (i) and $\lim_{\mathcal{K}\ni k\to\infty}\widehat{L}_{i_1}^k=\infty$, we infer from \eqref{Lk-temp2} that $\lim_{\mathcal{K}\ni k\to\infty}\|y^{k,\widehat{j_k}}-x^k\|=0$. Recall that $\lim_{\mathcal{K}\ni k\to\infty}x^k=x^*$. There exists $\overline{k}\in\mathbb{N}$ such that for all $\mathcal{K}\ni k\ge\overline{k}$, $y^{k,\widehat{j_k}},x^k\in\mathbb{B}(x^*,\delta)$. Now, for each $\mathcal{K}\ni k\ge\overline{k}$, invoking the above \eqref{upperC2} with $(x'',x')=(y^{k,\widehat{j}_k}, x^k)$ and $\zeta'=V_{i_1}^k$ immediately results in 
\[
 0<g_{i_1}(y^{k,\widehat{j}_k})\le g_{i_1}(x^k)+ \langle V_{i_1}^k,y^{k,\widehat{j}_k}-x^k\rangle+({\rho_{i_1}^*}/{2})\|y^{k,\widehat{j}_k}-x^k\|^2.
\]
Together with \eqref{Lk-temp2}, for each $\mathcal{K}\ni k\ge\overline{k}$, it holds
$({\widehat{L}^k_{i_1}-\beta_{C}}-\rho_{i_1}^*)\|y^{k,\widehat{j}_k}-x^k\|^2< 0$. Note that $y^{k,\widehat{j_k}}\neq x^k$ for all $k\in\mathcal{K}$ as $g_{i_1}(y^{k,\widehat{j_k}})>0\ge g_{i_1}(x^k)$. We get $\widehat{L}_{i_1}^k<\beta_{C}+\rho_{i_1}^*$ for all $\mathcal{K}\ni k\ge\overline{k}$, a contradiction to  $\lim_{\mathcal{K}\ni k\to\infty}\widehat{L}^k_{i_1}=\infty$. 
	
\noindent
{\bf(iii)} Suppose on the contrary that $\{\mu_k\}_{k\in\mathbb{N}}$ is unbounded. Let $\widehat{\mu}_k:=\tau^{-1}\mu_k$ for each $k\in\mathbb{N}$. From step (2d) and the boundedness of $\{x^k\}_{k\in\mathbb{N}}$, there exists  $\mathcal{K}\subset\mathbb{N}$ such that $\lim_{\mathcal{K}\ni k\to\infty}x^k=x^*\in\Gamma$ and for each $k\in\mathcal{K}$, an index $1<\widehat{j}_k<j_k$ exists with 
\begin{equation}\label{ineq-mubound0}
\widehat{\mu}_k=\mu_{k,\widehat{j}_k}\ \ {\rm and}\ \ F(y^{k,\widehat{j}_k})>F(x^k)-({\alpha}/{2})\|y^{k,\widehat{j}_k}-x^k\|^2.
\end{equation}
Applying Lemma \ref{lemma-major} (i) with $\mathcal{O}'=\mathcal{O}$ to the function $g_0$ for $x=x^*$, there exist $\delta>0$ and $\rho_{0}^*>0$ such that for any $x',x''\in\mathbb{B}(x^*,\delta)$ and $\xi\in\overline{\partial} g_0(x')$,
\begin{equation}\label{upperC2-F0}
g_0(x'')\le g_0(x') +\langle\xi,x''-x'\rangle+({\rho_{0}^*}/{2})\|x''-x'\|^2.
\end{equation}
Note that for each $k\in\mathcal{K}$, $\widehat{\mu}_k\mathcal{I}=\mu_{k,\widehat{j}_k}\mathcal{I}\preceq\mathcal{Q}_{k,\widehat{j}_k}$ and  $y^{k,\widehat{j}_k}$ is an inexact minimizer of \eqref{subprobkj} with $j=\widehat{j_k}$ satisfying \eqref{inexact1}. From the first inequality of \eqref{inexact1}, for any $k\in\mathcal{K}$, 
\begin{align*}
(\widehat{\mu}_k/2)\|y^{k,\widehat{j}_k}-x^k\|^2&\le-\langle\xi^k,y^{k,\widehat{j}_k}-x^k\rangle+\phi(x^k)-\phi(y^{k,\widehat{j}_k})\\
&\le-\langle\xi^k+v^k,y^{k,\widehat{j}_k}-x^k\rangle\ {\rm with}\ v^k\in\partial\phi(x^k).
\end{align*}
From part (i) and Remark \ref{remark-subdiff} (b) , the sequence $\{(\xi^k,v^k)\}_{k\in\mathbb{N}}$ is bounded. Then, the above inequality implies $\lim_{\mathcal{K}\ni k\to\infty}\|y^{k,\widehat{j}_k}-x^k\|=0$. Since $\lim_{\mathcal{K}\ni k\to\infty}x^k=x^*$, there exists $\overline{k}\in\mathbb{N}$ such that $y^{k,\widehat{j}_k},x^k\in\mathbb{B}(x^*,\delta)$ for any $\mathcal{K}\ni k\ge\overline{k}$. 
Now applying \eqref{upperC2-F0} with $x''=y^{k,\widehat{j}_k}$, $x'=x^k$ and $\xi=\xi^k$, we infer that for any $\mathcal{K}\ni k\ge\overline{k}$,
\begin{align}\label{muk-temp1}
F(y^{k,\widehat{j}_k})&\le g_0(x^k)+\langle\xi^k,y^{k,\widehat{j}_k}-x^k\rangle+({\rho_0^*}/{2})\|y^{k,\widehat{j}_k}-x^k\|^2+\phi(y^{k,\widehat{j}_k})\nonumber\\
&\le F_{k,\widehat{j_k}}(y^{k,{\widehat{j}_k}})+\frac{\rho_0^*\!-\!\widehat{\mu}_k}{2}\|y^{k,\widehat{j}_k}\!-\!x^k\|^2
\stackrel{\eqref{inexact1}}{\le}F(x^k)+\frac{\rho_0^*\!-\!\widehat{\mu}_k}{2}\|y^{k,\widehat{j}_k}\!-\!x^k\|^2.
\end{align}
Note that for all $\mathcal{K}\ni k\ge\overline{k}$, $y^{k,\widehat{j}_k}\neq x^k$ from \eqref{ineq-mubound0}. Together with \eqref{ineq-mubound0}, we obtain $\widehat{\mu}_k<\alpha+\rho_0^*$, which is a contradiction to $\lim_{k\to\infty}\widehat{\mu}_k=\infty$.
\end{proof} 
\begin{lemma}\label{lemma-xbark}
 Under Assumptions \ref{ass2}-\ref{ass3}, $\omega(x^0)$ is nonempty and compact, and  
 \begin{equation*}
 \lim_{k\to\infty}\|x^k-\overline{x}^k\|=0\ \ {\rm and}\ \ \lim_{k\to\infty}\|x^{k+1}-\overline{x}^k\|=0.
\end{equation*}
\end{lemma}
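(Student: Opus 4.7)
The plan is to prove the three assertions of Lemma \ref{lemma-xbark} in succession. First, the compactness of $\omega(x^0)$ is essentially free: under Assumption \ref{ass3}, boundedness of $\{x^k\}_{k\in\mathbb{N}}$ yields nonemptiness via Bolzano-Weierstrass, the cluster-point set of any sequence is always closed, and these together give compactness in $\mathbb{R}^n$. Corollary \ref{corollary-objF} supplies $\sum_k\|x^{k+1}-x^k\|^2<\infty$ and hence $\|x^{k+1}-x^k\|\to 0$, so the two displayed limits are equivalent by the triangle inequality; it therefore suffices to prove $\|x^k-\overline{x}^k\|\to 0$.

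The central reduction is a strong convexity bound at the constrained minimizer $\overline{x}^k$. Since $\mathcal{Q}_k\succeq\mu_{\min}\mathcal{I}$, $F_k$ is strongly convex with modulus at least $\mu_{\min}$, and since $G(x^k,x^k,V^k,L^k)=g(x^k)\in\mathbb{R}_{-}^m$ the iterate $x^k$ lies in $\Gamma_k$. The standard inequality for strongly convex minimization over a convex set (obtained by combining strong convexity along the segment from $\overline{x}^k$ to $x^k$ with the minimality of $\overline{x}^k$ and letting the convex-combination parameter tend to $0$) then yields
\[
(\mu_{\min}/2)\|x^k-\overline{x}^k\|^2 \le F_k(x^k)-F_k(\overline{x}^k) = F(x^k)-F_k(\overline{x}^k),
\]
using $F_k(x^k)=g_0(x^k)+\phi(x^k)=F(x^k)$. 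The task thus reduces to showing $F(x^k)-F_k(\overline{x}^k)\to 0$; since $F_k(\overline{x}^k)\le F_k(x^k)=F(x^k)\to\varpi^*$ from Corollary \ref{corollary-objF}, the bound $\limsup F_k(\overline{x}^k)\le\varpi^*$ is immediate, and the heart of the argument is the matching $\liminf F_k(\overline{x}^k)\ge\varpi^*$.

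For the $\liminf$, I would invoke Proposition \ref{prop-ebound} applied to the $k$-th subproblem. Assumption \ref{ass2} provides the subregularity of $\mathcal{G}_k$ at $(\overline{x}^k,0)$, and the proof of Lemma \ref{lemma-welldef}(i) already exhibits a KKT pair $(\overline{v}^k,\overline{\lambda}^k)$ for the subproblem. Substituting the inexactness bounds $S_k\le\beta_S\|x^{k+1}-x^k\|$ and $C_k\le(\beta_C/2)\|x^{k+1}-x^k\|^2$ into Proposition \ref{prop-ebound} with $(x,v,\lambda)=(x^{k+1},v^{k+1},\lambda^{k+1})$ produces the estimate
\[
\mu_{\min}\|x^{k+1}-\overline{x}^k\|^2 \le \beta_S\|x^{k+1}-\overline{x}^k\|\|x^{k+1}-x^k\| + (\|\overline{\lambda}^k\|+1)(\beta_C/2)\|x^{k+1}-x^k\|^2,
\]
so $\|x^{k+1}-\overline{x}^k\|=O(\|x^{k+1}-x^k\|)\to 0$ along any subsequence on which $\{\overline{\lambda}^k\}$ stays bounded, which in turn furnishes the desired $\liminf$ bound.

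The hard part will be handling subsequences along which $\|\overline{\lambda}^k\|\to\infty$, since Lemma \ref{lemma-xbark} is stated without the partial BMP that later theorems will assume. I would argue by contradiction: if $\|x^k-\overline{x}^k\|\not\to 0$, the boundedness of $x^k,\overline{x}^k,\xi^k,V^k,L^k,\mathcal{Q}_k,\mu_k$ coming from Assumption \ref{ass3} and Lemma \ref{lemma-bound} lets me extract a subsequence on which all these quantities converge to $(x^*,\overline{x}^*,\xi^*,V^*,L^*,\mathcal{Q}^*,\mu^*)$ with $\overline{x}^*\ne x^*$; in the bounded-multiplier regime the displayed estimate already closes the argument, whereas in the unbounded regime I would rescale $\overline{\lambda}^k/\|\overline{\lambda}^k\|$ and pass the inexact KKT identity to the limit to extract a nonzero $\widetilde\lambda^*\in\mathcal{N}_{\mathbb{R}_{-}^m}(g(x^*))$ with $V^*\widetilde\lambda^*=0$, then combine the subregularity of $\mathcal{G}_k$ from Assumption \ref{ass2} with the outer semicontinuity of $\partial g$ from Remark \ref{remark-subdiff}(b) to rule this singular case out. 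This multiplier-normalization step is the delicate point of the proof.
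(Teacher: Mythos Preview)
Your reduction to $\frac{\mu_{\min}}{2}\|x^k-\overline{x}^k\|^2\le F_k(x^k)-F_k(\overline{x}^k)$ is correct and matches the paper. The gap is in how you close this: you try to control the right-hand side via Proposition~\ref{prop-ebound}, which brings in the \emph{exact} multiplier $\overline{\lambda}^k$, and then you are forced into the ``delicate'' unbounded-multiplier case. Your proposed rescue---normalize $\overline{\lambda}^k/\|\overline{\lambda}^k\|$, pass to the limit to obtain a nonzero $\widetilde\lambda^*$ in the normal cone annihilated by the constraint Jacobian, and rule this out via Assumption~\ref{ass2}---does not work. Assumption~\ref{ass2} is only metric \emph{sub}regularity of $\mathcal{G}_{k,j}$ at each fixed $(\overline{x}^{k,j},0)$; it is not stable under limits and, even if it held at the limit point, subregularity does not preclude a nonzero singular multiplier (that is the dual characterization of metric \emph{regularity}/MFCQ, which is strictly stronger). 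So the contradiction you want is not available under the stated hypotheses, and the proof as written does not go through without Assumption~\ref{ass4}, which Lemma~\ref{lemma-xbark} does not assume.

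The paper sidesteps the multiplier issue entirely. Instead of estimating $\|x^{k+1}-\overline{x}^k\|$, it bounds the \emph{function-value} gap $F_k(x^{k+1})-F_k(\overline{x}^k)$ directly: by convexity of the Lagrangian $L_k(\cdot,\lambda^{k+1})$ with the \emph{inexact} multiplier $\lambda^{k+1}$ produced by the algorithm, together with $\overline{x}^k\in\Gamma_k$ and $\lambda^{k+1}\ge 0$, one gets
\[
F_k(x^{k+1})-F_k(\overline{x}^k)\le S_k(x^{k+1},v^{k+1},\lambda^{k+1})\,\|x^{k+1}-\overline{x}^k\|+C_k(x^{k+1},\lambda^{k+1}).
\]
Now $\|x^{k+1}-\overline{x}^k\|$ need not tend to zero---it only needs to be \emph{bounded}, which is already supplied by Lemma~\ref{lemma-bound}(i) and Assumption~\ref{ass3}. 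Plugging in the inexactness bounds \eqref{inexact1}--\eqref{inexact2} and using convexity of $F_k$ to control $F_k(x^k)-F_k(x^{k+1})$ by $\|\partial F_k(x^k)\|\cdot\|x^{k+1}-x^k\|$ (bounded by Remark~\ref{remark-subdiff}(b)), one obtains $\frac{\mu_{\min}}{2}\|\overline{x}^k-x^k\|^2\le C\|x^{k+1}-x^k\|+C'\|x^{k+1}-x^k\|^2\to 0$. The point you are missing is that the Lagrangian inequality with $\lambda^{k+1}$ gives a function-value bound without any control on exact multipliers.
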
  
\begin{proof}
It suffices to prove the first limit, since the second follows the first one and Corollary \ref{corollary-objF}. For each $k\in\mathbb{N}$, from the optimality of $\overline{x}^k$ to \eqref{subprob}, there exists $\overline{v}^k\in\partial\phi(\overline{x}^k)$ such that $-\nabla\vartheta_k(\overline{x}^k)-\overline{v}^k\in\mathcal{N}_{\Gamma_k}(\overline{x}^k)$. Along with the convexity of $\Gamma_k$, 
\begin{equation*}
 0\ge \langle\nabla\vartheta_k(\overline{x}^k)+\overline{v}^k,\overline{x}^k-x^k\rangle=\langle\xi^k+\overline{v}^k+\mathcal{Q}_k(\overline{x}^k\!-\!x^k),\overline{x}^k\!-\!x^k\rangle.
\end{equation*}
Combining this inequality with the expression of $F_k$ and $\overline{v}^k\in\partial\phi(\overline{x}^k)$ leads to 
\begin{align*}
 F_k(x^k)-F_k(\overline{x}^k)&=\phi(x^k)-\langle\xi^k,\overline{x}^k\!-\!x^k\rangle-\frac{1}{2}\langle\overline{x}^k\!-\!x^k,\mathcal{Q}_k(\overline{x}^k\!-\!x^k)\rangle-\phi(\overline{x}^k)\\
 &\ge\frac{1}{2}\langle\overline{x}^k-x^k,\mathcal{Q}_k(\overline{x}^k-x^k)\rangle+\phi(x^k)-\phi(\overline{x}^k)+\langle\overline{v}^k,\overline{x}^k-x^k\rangle\\
&\ge \frac{1}{2}\langle\overline{x}^k-x^k,\mathcal{Q}_k(\overline{x}^k-x^k)\rangle\ge\frac{\mu_{\min}}{2}\|\overline{x}^k-x^k\|^2\quad\forall k\in\mathbb{N}.
\end{align*}
Let $L_k(x,\lambda):=F_k(x)+\langle\lambda,G(x,x^k,V^k,L^k)\rangle$ for $(x,\lambda)\in\mathbb{R}^n\times\mathbb{R}^m$ be the Lagrange function of \eqref{subprob}. From the convexity of $L_k(\cdot,\lambda^{k+1})$ and the inclusion 
$\nabla\vartheta_k(x^{k+1})+v^{k+1}+\nabla\!_xG(x^{k+1},x^k,V^k,L^k)\lambda^{k+1}\in\partial_x L_k(x^{k+1},\lambda^{k+1})$, it follows 
$L_k(x^{k+1},\lambda^{k+1})-L_k(\overline{x}^k,\lambda^{k+1})\le\langle \nabla\vartheta_k(x^{k+1})+v^{k+1}+\nabla\!_xG(x^{k+1},x^k,V^k,L^k)\lambda^{k+1},x^{k+1}-\!\overline{x}^k\rangle$. Then, 
\begin{align}\label{sub-optval}
 F_k(x^{k+1})\!-\!F_k(\overline{x}^k)   &\le\langle\nabla\vartheta_k(x^{k+1})\!+\!v^{k+1}\!+\!\nabla\!_xG(x^{k+1},x^k,V^k,L^k)\lambda^{k+1},x^{k+1}\!-\!\overline{x}^k\rangle\nonumber\\
 &\quad-\langle\lambda^{k+1},G(x^{k+1},x^k,V^k,L^k)\rangle+\langle\lambda^{k+1},G(\overline{x}^k,x^k,V^k,L^k)\rangle\nonumber\\
 &\le S_k(x^{k+1},v^{k+1},\lambda^{k+1})\|x^{k+1}-\overline{x}^k\|+C_k(x^{k+1},\lambda^{k+1}),
\end{align}
where the second inequality is due to $\lambda^{k+1}\in\mathbb{R}_+^m$ and $G(\overline{x}^k,x^k,V^k,L^k)\in\mathbb{R}_-^m$.
Combining the above two inequalities with the inexactness condition \eqref{inexact2} yields
\[
 \frac{\mu_{\min}}{2}\|\overline{x}^k\!-\!x^k\|^2\le F_k(x^k)\!-\!F_k(x^{k+1})+\beta_{S}\|x^{k+1}\!-\!x^k\|\|x^{k+1}\!-\!\overline{x}^k\|+\frac{\beta_{C}}{2}\|x^{k+1}\!-\!x^k\|^2.
\]
By Assumption \ref{ass3} and Lemma \ref{lemma-bound} (i), there exists $\gamma>0$ such that $\|x^{k+1}\!-\!\overline{x}^k\|\le\gamma$. Along with the above inequality and the convexity of $F_k$, it follows
\[
  ({\mu_{\min}}/{2})\|\overline{x}^k\!-\!x^k\|^2\le \max_{\zeta\in\partial F_k(x^k)}(\|\zeta\|+\gamma\beta_{S})\|x^{k+1}-x^k\|+({\beta_{C}}/{2})\|x^{k+1}-x^k\|^2.
\]
Since $\partial F_k(x^k)=\xi^k+\partial\phi(x^k)$ with $\xi^k\in\partial g_0(x^k)$, $\max_{\zeta\in\partial F_k(x^k)}(\|\zeta\|+\gamma\beta_{S})$ is bounded by Remark \ref{remark-subdiff} (b). Then, from the above inequality and $\lim_{k\to\infty}\|x^{k+1}-x^k\|=0$ by Corollary \ref{corollary-objF}, we obtain $\lim_{k\to\infty}\|\overline{x}^{k}-x^k\|=0$. The proof is completed.
\end{proof}
\subsection{Subsequential convergence}\label{sec4.1}
This section aims at proving that every accumulation point of $\{x^k\}_{k\in\mathbb{N}}$ is a stationary point of \eqref{prob} by leveraging the parametric problem \eqref{para-prob} associated with $\mathbb{U}:=\mathbb{R}^n\times\mathbb{R}_{+}^m\times\mathbb{R}^{n\times m}\times\mathbb{R}^n\times\mathbb{S}_{+}$, where $\mathbb{S}_{+}$ signifies the set of all PSD linear mappings from $\mathbb{R}^n$ to $\mathbb{R}^n$, and the following
\begin{equation}\label{theta-def}
\theta(u,x):=g_0(s)+\langle\xi,x-s\rangle+\frac{1}{2}\langle x\!-\!s,\mathcal{Q}(x\!-\!s)\rangle\ {\rm and}\  H(u,x):=G(x,s,V,L)    
\end{equation} 
for $u=(s,L,V,\xi,\mathcal{Q})\in\mathbb{U}$. For each $k\in\mathbb{N}$, let $u^k\!:=(x^k,L^k,V^k,\xi^k,\mathcal{Q}_k)\in\mathbb{U}$. Then,
\[
\vartheta_k(\cdot)\!=\theta(u^k,\cdot),\ G(\cdot,x^k,V^k,L^k)=H(u^k,\cdot)\ \ {\rm and}\ \ \Gamma_k=\mathcal{S}(u^k)\quad\forall k\in\mathbb{N}.
\]
Obviously, the subproblem \eqref{subprob} is precisely the problem \eqref{para-prob} associated with $u=u^k$, and its multiplier set at $\overline{x}^k$ is $\mathcal{M}(u^k,\overline{x}^k)$, where $\mathcal{M}$ is the mapping defined in \eqref{para-multiplier}. 

Under Assumption \ref{ass3}, the sequence $\{(u^k,\overline{x}^{k})\}_{k\in\mathbb{N}}$ is bounded by Lemma \ref{lemma-bound}, so its cluster point set, denoted by $\Omega^*$, is nonempty and compact. Also, by Lemma \ref{lemma-xbark}, every $(u^*,\overline{x}^*)\in \Omega^*$ is of the form $(x^*,L^*,V^*,\xi^*,\mathcal{Q}^*,\overline{x}^*)$ with $\overline{x}^*=x^*$. Our analysis needs the following assumption on the partial BMP w.r.t. $x$ at all $(u^*,\overline{x}^*)\in \Omega^*$.
\begin{assumption}\label{ass4}
The constraint system of \eqref{para-prob} with $\theta$ and $H$ in \eqref{theta-def} satisfies the partial BMP w.r.t. $x$ at all $(u^*,\overline{x}^*)\in \Omega^*$, the set of cluster points of $\{(u^k,\overline{x}^{k})\}_{k\in\mathbb{N}}$.   
\end{assumption}
\begin{remark}\label{remark-ass4}
From \cite[Theorem 3.3 $\&$ Corollary 3.7]{Gfrerer17}, Assumption \ref{ass4} holds if for every $(u^*,\overline{x}^*)\in \Omega^*$ the mapping $\mathcal{H}_{u^*}(\cdot)\!:=H(u^*,\cdot)-\mathbb{R}_{-}^m=G(\cdot,\overline{x}^*,V^*,L^*)-\mathbb{R}_{-}^m$ is metrically regular at $(\overline{x}^*,0)$ or equivalently the partial MFCQ w.r.t. $x$ holds at $(u^*,\overline{x}^*)$ or equivalently ${\rm Ker}\,V^*\cap\mathcal{N}_{\mathbb{R}_{-}^m}(g(x^*))=\{0\}$. From the remark after Assumption \ref{ass2}, we learn that the MFCQ at $\overline{x}^*$ for \eqref{prob} implies the Slater's CQ for the convex constraint system $G(x,\overline{x}^*,V^*,L^*)\in\mathbb{R}_{-}^m$, while the latter implies the metric regularity of $\mathcal{H}_{u^*}$ at $(\overline{x}^*,0)$. Therefore, Assumption \ref{ass4} is weaker than the MFCQ for \eqref{prob} at all feasible points, a common CQ to analyze the convergence or iteration complexity of MBA methods (see, e.g., \cite[Assumption 2.3]{YuPongLv21}, \cite[Assumption 3]{Boob24} and \cite[Assumption 3]{Nabou24}). In addition, from \cite[Proposition 3.2]{Gfrerer17}, Assumption \ref{ass4} is also implied by the partial CRCQ w.r.t. $x$ at all $(u^*,\overline{x}^*)\in \Omega^*$, which is independent of the partial MFCQ w.r.t. $x$ at the corresponding $(u^*,\overline{x}^*)$ by virtue of the examples in \cite{Janin84}.
\end{remark}

\begin{proposition}\label{prop-lambark}
Under Assumptions \ref{ass2}-\ref{ass4}, there exist  $\widehat{k}\in\mathbb{N}$ and a constant $\beta_{\overline{\lambda}}>0$ such that for each $k\ge\widehat{k}$, a vector $\overline{\lambda}^k\in\mathcal{M}(u^k,\overline{x}^k)$ exists and $\|\overline{\lambda}^k\|\le\beta_{\overline{\lambda}}$.
\end{proposition}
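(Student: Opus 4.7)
The plan has three stages: existence of some multiplier $\overline{\lambda}^k\in\mathcal{M}(u^k,\overline{x}^k)$, a compactness-based extraction of a uniform constant from the partial BMP, and a uniform bound on the ``driving'' normal vector.

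First, for each $k$, since $\overline{x}^k$ is the unique optimal solution of the $k$th subproblem \eqref{subprob} and Assumption \ref{ass2} makes $\mathcal{H}_{u^k}$ subregular at $(\overline{x}^k,0)$, the reduction of the normal cone $\mathcal{N}_{\Gamma_k}(\overline{x}^k)=\nabla_{\!x}H(u^k,\overline{x}^k)\mathcal{N}_{\mathbb{R}_{-}^m}(H(u^k,\overline{x}^k))$ gives the KKT conditions. Consequently, there exist $\overline{v}^k\in\partial\phi(\overline{x}^k)$ and $\lambda\in\mathcal{N}_{\mathbb{R}_{-}^m}(H(u^k,\overline{x}^k))$ with $\nabla_{\!x}\theta(u^k,\overline{x}^k)+\overline{v}^k+\nabla_{\!x}H(u^k,\overline{x}^k)\lambda=0$; equivalently $\mathcal{M}(u^k,\overline{x}^k)\neq\emptyset$. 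The issue is that this particular $\lambda$ is not guaranteed to have a uniform bound, so we must select a different representative.

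Next, by Lemma \ref{lemma-bound} and Assumption \ref{ass3}, the sequence $\{(u^k,\overline{x}^k)\}_{k\in\mathbb{N}}$ is bounded, so its cluster-point set $\Omega^*$ is nonempty and compact. For each $(u^*,\overline{x}^*)\in\Omega^*$, Assumption \ref{ass4} furnishes a neighborhood $\mathcal{U}_{u^*,\overline{x}^*}\times\mathcal{V}_{u^*,\overline{x}^*}$ of $(u^*,\overline{x}^*)$ and a constant $\kappa_{u^*,\overline{x}^*}>0$ on which the partial BMP inequality of Definition \ref{def-BMP} holds. These neighborhoods form an open cover of $\Omega^*$, so by Heine--Borel I can extract a finite subcover, yielding a single constant $\kappa:=\max_i\kappa_i$ and an open set $W$ containing $\Omega^*$ such that the BMP inequality with constant $\kappa$ is valid throughout $W$. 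Because $\Omega^*$ is the cluster-point set of a bounded sequence, ${\rm dist}((u^k,\overline{x}^k),\Omega^*)\to 0$, so there exists $\widehat{k}\in\mathbb{N}$ with $(u^k,\overline{x}^k)\in W$ for every $k\ge\widehat{k}$.

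For such $k$, set $y^k:=-\nabla_{\!x}\theta(u^k,\overline{x}^k)-\overline{v}^k$ with $\overline{v}^k\in\partial\phi(\overline{x}^k)$ as supplied by the optimality condition above. Then $y^k\in\nabla_{\!x}H(u^k,\overline{x}^k)\mathcal{N}_{\mathbb{R}_{-}^m}(H(u^k,\overline{x}^k))=\mathcal{N}_{\mathcal{S}(u^k)}(\overline{x}^k)$, and the BMP yields $\overline{\lambda}^k\in\Lambda(u^k,\overline{x}^k,y^k)\cap\kappa\|y^k\|\mathbb{B}$; by the inclusion \eqref{Mset-Lambda}, $\overline{\lambda}^k\in\mathcal{M}(u^k,\overline{x}^k)$. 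To bound $\|y^k\|$ uniformly, note that $\nabla_{\!x}\theta(u^k,\overline{x}^k)=\xi^k+\mathcal{Q}_k(\overline{x}^k-x^k)$: Lemma \ref{lemma-bound}(i),(iii) bounds $\|\xi^k\|$ and $\|\mathcal{Q}_k\|$, while Lemma \ref{lemma-xbark} gives $\|\overline{x}^k-x^k\|\to 0$; and since $\{\overline{x}^k\}$ is bounded, local boundedness of $\partial\phi$ (Remark \ref{remark-subdiff}(b)) bounds $\|\overline{v}^k\|$. Hence $\|y^k\|\le B$ for some $B>0$, and $\|\overline{\lambda}^k\|\le\kappa B=:\beta_{\overline{\lambda}}$.

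The principal obstacle I anticipate is the transition from a pointwise BMP (each cluster point has its own neighborhood and constant) to a uniform constant usable for all large $k$; this is overcome by the Heine--Borel subcover applied to the compact set $\Omega^*$ together with the fact that the iterate pairs approach $\Omega^*$.
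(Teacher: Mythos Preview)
Your proof is correct and follows essentially the same approach as the paper's: extract a uniform BMP constant via a Heine--Borel subcover of the compact cluster set $\Omega^*$, use the optimality condition to produce a normal vector $y^k\in\mathcal{N}_{\mathcal{S}(u^k)}(\overline{x}^k)$, apply the BMP to pick $\overline{\lambda}^k\in\Lambda(u^k,\overline{x}^k,y^k)\subset\mathcal{M}(u^k,\overline{x}^k)$ with $\|\overline{\lambda}^k\|\le\kappa\|y^k\|$, and bound $\|y^k\|$ via Lemma~\ref{lemma-bound} and the local boundedness of $\partial\phi$. Your opening paragraph on mere existence via Assumption~\ref{ass2} and your appeal to Lemma~\ref{lemma-xbark} for $\|\overline{x}^k-x^k\|\to 0$ are harmless extras (the paper bounds $\|\mathcal{Q}_k(\overline{x}^k-x^k)\|$ directly from the boundedness of $\{\overline{x}^k\},\{x^k\},\{\mathcal{Q}_k\}$), but otherwise the arguments coincide.
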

\begin{proof}
 By Definition \ref{def-BMP}, for every $(u^{*},\overline{x}^{*})\in \Omega^*$, there exist $\varepsilon_{u^{*},\overline{x}^{*}}>0$ and $\kappa_{u^{*},\overline{x}^{*}}>0$ such that for all $(u,x)\in\mathbb{B}((u^*,\overline{x}^*),\varepsilon_{u^{*},\overline{x}^{*}})\cap[\mathbb{U}\times\mathcal{S}(u)]$ and $y\in\mathcal{N}_{\mathcal{S}(u)}(x)$,  
 \begin{equation}\label{BMP}  \Lambda(u,x,y)\cap\kappa_{u^{*},\overline{x}^{*}}\|y\|\mathbb{B}\ne\emptyset,
 \end{equation}
 where $\Lambda$ is the mapping defined in Definition \ref{def-BMP}. Since $\bigcup_{(u^*,\overline{x}^*)\in \Omega^*}\mathbb{B}^{\circ}((u^*,\overline{x}^*),\varepsilon_{u^{*},\overline{x}^{*}})$ is an open covering of the compact set $\Omega^*$, according to the Heine-Borel covering theorem, there exist $(u^{1,*},\overline{x}^{1,*}),\ldots,(u^{p,*},\overline{x}^{p,*})\in \Omega^*$ for some $p\in\mathbb{N}$ and $\varepsilon_{u^{1,*},\overline{x}^{1,*}}>0,\ldots,\varepsilon_{u^{p,*},\overline{x}^{p,*}}>0$ such that $ \Omega^*\subset\bigcup_{i=1}^p\mathbb{B}^{\circ}((u^{i,*},\overline{x}^{i,*}),\varepsilon_{u^{i,*},\overline{x}^{i,*}})$. Notice that the limit $\lim_{k\to\infty}{\rm dist}((u^k,\overline{x}^{k}),\Omega^*)=0$ holds. There exists an index $\widehat{k}\in\mathbb{N}$ such that for all $k\ge\widehat{k}$, 
 \(	(u^k,\overline{x}^k)\in\bigcup_{i=1}^p\mathbb{B}((u^{i,*},\overline{x}^{i,*}),\varepsilon_{u^{i,*},\overline{x}^{i,*}}).
 \)
 Write $\kappa\!:=\max_{i\in[p]}\kappa_{u^{i,*},\overline{x}^{i,*}}$ and $\varepsilon\!:=\min_{i\in[p]}\varepsilon_{u^{i,*},\overline{x}^{i,*}}$. For each $k\ge\widehat{k}$ (if necessary by increasing $\widehat{k}$), we have $(u^k,\overline{x}^k)\in\mathbb{B}((u^{i,*},\overline{x}^{i,*}),\varepsilon)\cap{\rm gph}\,\mathcal{S}$ for some $i\in[p]$. For each $k\in\mathbb{N}$, from the optimality of $\overline{x}^k$ to \eqref{para-prob} associated with $u=u^k$ and the expression of $\theta$ in \eqref{theta-def}, 
 \[ 0\in\nabla_{\!x}\theta(u^k,\overline{x}^k)+\partial\phi(\overline{x}^k)+\mathcal{N}_{\mathcal{S}(u^k)}(\overline{x}^k)=\partial F_k(\overline{x}^k)+\mathcal{N}_{\mathcal{S}(u^k)}(\overline{x}^k), 
 \]
 so there exists $-y^k\in\partial F_k(\overline{x}^k)$ such that $y^k\in\mathcal{N}_{\mathcal{S}(u^k)}(\overline{x}^k)$. From \eqref{BMP}, for each $k\ge\widehat{k}$, there exists $\overline{\lambda}^k\in\Lambda(u^k,\overline{x}^k,y^k)\subset\mathcal{M}(u^k,\overline{x}^k)$ such that $\|\overline{\lambda}^k\|\le\kappa\|y^k\|$, where the inclusion is due to \eqref{Mset-Lambda}. As $\partial F_k(\overline{x}^k)=\xi^k\!+\!\mathcal{Q}_k(\overline{x}^k\!-\!x^k)\!+\!\partial\phi(\overline{x}^k)$, we infer from Lemma \ref{lemma-bound} and Remark \ref{remark-subdiff} (b) that $\{y^k\}_{k\ge\widehat{k}}$ is bounded, so is $\{\overline{\lambda}^k\}_{k\ge\widehat{k}}$. 
\end{proof}

Proposition \ref{prop-lambark} identifies a bounded multiplier sequence from the set sequence $\{\mathcal{M}(u^k,\overline{x}^k)\}_{k\in\mathbb{N}}$, though the set sequence itself is not uniformly bounded. This is significantly different from the existing works on MBA-like methods, where such a bounded multiplier sequence was achieved under the MFCQ at all feasible points of \eqref{prob}. The latter ensures the uniform boundedness of the set sequence $\{\mathcal{M}(u^k,\overline{x}^k)\}_{k\in\mathbb{N}}$ and is stronger than Assumption \ref{ass4} by the discussion in Remark \ref{remark-ass4}. Now we are ready to apply Proposition \ref{prop-lambark} to prove the subsequential convergence of $\{x^k\}_{k\in\mathbb{N}}$.
\begin{theorem}\label{theorem-sconverge}
 Under Assumptions \ref{ass2}-\ref{ass4},  $\omega(x^0)\subset\Gamma^*$ and $F(x)\equiv\varpi^*$ on $\omega(x^0)$.
\end{theorem}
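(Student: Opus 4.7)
The plan is to pick any $x^*\in\omega(x^0)$ with $\{x^k\}_{k\in\mathcal{K}}\to x^*$, and then pass to the limit in the KKT system of the inner subproblems, whose existence is guaranteed by Proposition \ref{prop-lambark}. First, using Lemma \ref{lemma-bound} and compactness, I will thin $\mathcal{K}$ (without relabeling) so that  $u^k=(x^k,L^k,V^k,\xi^k,\mathcal{Q}_k)\to u^*=(x^*,L^*,V^*,\xi^*,\mathcal{Q}^*)$; Lemma \ref{lemma-xbark} then gives $\overline{x}^k\to x^*$ automatically. For $k\ge\widehat{k}$, Proposition \ref{prop-lambark} yields $\overline{\lambda}^k\in\mathcal{M}(u^k,\overline{x}^k)$ with $\|\overline{\lambda}^k\|\le\beta_{\overline{\lambda}}$. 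By definition of $\mathcal{M}$, there exists $\overline{v}^k\in\partial\phi(\overline{x}^k)$ with
\[
\nabla\vartheta_k(\overline{x}^k)+\overline{v}^k+\nabla_{\!x}G(\overline{x}^k,x^k,V^k,L^k)\overline{\lambda}^k=0,\qquad \overline{\lambda}^k\in\mathcal{N}_{\mathbb{R}_{-}^m}(G(\overline{x}^k,x^k,V^k,L^k)).
\]
Since $\partial\phi$ is locally bounded at $x^*$ by Remark \ref{remark-subdiff}(b), $\{\overline{v}^k\}$ is bounded, so a further subsequence gives $\overline{\lambda}^k\to\overline{\lambda}^*$ and $\overline{v}^k\to\overline{v}^*$.

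Next I will identify the limiting objects as subgradients at $x^*$. The outer semicontinuity of $\partial g_0$, $\partial g$ and $\partial\phi$ (Remark \ref{remark-subdiff}(b)) together with $\xi^k\in\partial g_0(x^k)$, $V^k\in\partial g(x^k)$ and $\overline{v}^k\in\partial\phi(\overline{x}^k)$ yield $\xi^*\in\partial g_0(x^*)$, $V_i^*\in\partial g_i(x^*)$ for each $i\in[m]$, and $\overline{v}^*\in\partial\phi(x^*)$. Now I will expand $\nabla\vartheta_k(\overline{x}^k)=\xi^k+\mathcal{Q}_k(\overline{x}^k-x^k)$ and note that the $i$th column of $\nabla_{\!x}G(\overline{x}^k,x^k,V^k,L^k)$ equals $V_i^k+L_i^k(\overline{x}^k-x^k)$. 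Using Lemma \ref{lemma-bound} (boundedness of $\mathcal{Q}_k$ and $L^k$), Lemma \ref{lemma-xbark} ($\|\overline{x}^k-x^k\|\to 0$) and $\|\overline{\lambda}^k\|\le\beta_{\overline{\lambda}}$, the error terms involving $\overline{x}^k-x^k$ vanish in the limit, giving
\[
0=\xi^*+\overline{v}^*+V^*\overline{\lambda}^*\in\partial g_0(x^*)+\partial\phi(x^*)+\textstyle{\sum_{i=1}^m}\overline{\lambda}^*_i\partial g_i(x^*).
\]
For the complementarity, $G(\overline{x}^k,x^k,V^k,L^k)\to g(x^*)$ by continuity, and passing to the limit in $\overline{\lambda}^k\ge 0$, $G(\overline{x}^k,x^k,V^k,L^k)\in\mathbb{R}_{-}^m$, $\langle\overline{\lambda}^k,G(\overline{x}^k,x^k,V^k,L^k)\rangle=0$ yields $\overline{\lambda}^*\in\mathcal{N}_{\mathbb{R}_{-}^m}(g(x^*))$. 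Combined with the displayed inclusion, this shows $x^*\in\Gamma^*$ by Definition \ref{spoint-def}.

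Finally, the claim $F\equiv\varpi^*$ on $\omega(x^0)$ follows at once from Corollary \ref{corollary-objF}: $F(x^k)\to\varpi^*$, while along the chosen subsequence $g_0$ is continuous at $x^*\in\Gamma\subset\mathcal{O}$ (upper-$\mathcal{C}^2$ implies local Lipschitzness) and $\phi$ is continuous on $\mathbb{R}^n$ by convexity, so $F(x^k)\to F(x^*)$, hence $F(x^*)=\varpi^*$.

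The main technical obstacle is the handling of the Lagrange multipliers: Proposition \ref{prop-lambark} does not give uniform boundedness of the whole multiplier sets $\mathcal{M}(u^k,\overline{x}^k)$, only of a distinguished selection $\overline{\lambda}^k$ obtained from the partial BMP, so I must work with this specific selection throughout. The rest is a careful limiting argument that exploits the vanishing of the $(\overline{x}^k-x^k)$-corrections in both $\nabla\vartheta_k$ and $\nabla_{\!x}G$, together with outer semicontinuity of the basic subdifferentials.
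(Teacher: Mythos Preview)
Your proof is correct and follows essentially the same route as the paper: extract a convergent subsequence of $(u^k,\overline{x}^k)$, invoke Proposition~\ref{prop-lambark} for a bounded multiplier selection $\overline{\lambda}^k$, and pass to the limit in the KKT system using outer semicontinuity of $\partial g_0$, $\partial g$, $\partial\phi$, and $\mathcal{N}_{\mathbb{R}_-^m}$, together with the vanishing of the $(\overline{x}^k-x^k)$-corrections. The only cosmetic difference is that you explicitly extract $\overline{v}^k\in\partial\phi(\overline{x}^k)$ and a convergent subsequence for it, while the paper keeps the set-valued inclusion $0\in\xi^k+\mathcal{Q}_k(\overline{x}^k-x^k)+\partial\phi(\overline{x}^k)+\ldots$ and passes to the limit directly via outer semicontinuity; both are equivalent.
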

\begin{proof}
 Pick any $x^*\!\in\omega(x^0)$. There exists an index set $\mathcal{K}\subset\mathbb{N}$ such that $x^*=\lim_{\mathcal{K}\ni k\to\infty}x^k$, and  $\lim_{\mathcal{K}\ni k\to\infty}\overline{x}^k=x^*$ follows Lemma \ref{lemma-xbark}. In view of the boundedness of $\{u^k\}_{k\in\mathcal{K}}$ by Lemma \ref{lemma-bound}, if necessary by taking a subsequence, $\lim_{\mathcal{K}\ni k\to\infty}u^k=u^*$ for some $u^*=(x^*,L^*,V^*,\xi^*,\mathcal{Q}^*)\in\mathbb{U}$. From Proposition \ref{prop-lambark}, for each $k\ge\widehat{k}$, there exists $\overline{\lambda}^k\in\mathcal{M}(u^k,\overline{x}^k)$, and the sequence $\{\overline{\lambda}^k\}_{k\ge\widehat{k}}$ is bounded. If necessary by taking a subsequence, we assume $\lim_{\mathcal{K}\ni k\to\infty}\overline{\lambda}^k=\overline{\lambda}^*$. By the definition of $\mathcal{M}$ in \eqref{para-multiplier}, for each $k\ge\widehat{k}$, with $\overline{\lambda}^k\in\mathcal{N}_{\mathbb{R}_{-}^m}(H(u^k,\overline{x}^k))$, it holds 
 \begin{align*}\label{sconv-ineq1}
0&\in\nabla_{\!x}\theta(u^k,\overline{x}^k)+\partial\phi(\overline{x}^k)+\nabla\!_{x}H(u^k,\overline{x}^k)\overline{\lambda}^k\\
&=\xi^k+\mathcal{Q}_k(\overline{x}^k\!-\!x^k)+\partial\phi(\overline{x}^k)+V^k\overline{\lambda}^k+\langle L^k,\overline{\lambda}^k\rangle(\overline{x}^k\!-\!x^k).
 \end{align*}
Since $H(\cdot,\cdot)$ is continuous by \eqref{theta-def}, $\lim_{\mathcal{K}\ni k\to\infty}H(u^k,\overline{x}^k)=H(u^*,x^*)=g(x^*)$. Passing the limit $\mathcal{K}\ni k\to\infty$ to the inclusion $\overline{\lambda}^k\in\mathcal{N}_{\mathbb{R}_{-}^m}(H(u^k,\overline{x}^k))$ and the above inclusion, and using the osc property of $\mathcal{N}_{\mathbb{R}_{-}^m}(\cdot)$ and $\partial\phi$  leads to 
\[
  \overline{\lambda}^*\in\mathcal{N}_{\mathbb{R}_{-}^m}(g(x^*))\ \ {\rm and}\ \ 0\in\xi^*+\partial\phi(x^*)+V^*\overline{\lambda}^*.
\]
 Note that $\xi^*\in\partial g_0(x^*)$ and $V^*\in\partial g(x^*)$ because $\xi^k\in\partial g_0(x^k)$ and $V^k\in\partial g(x^k)$ for each $k\in\mathbb{N}$, and $\partial g_0$ and $\partial g$ are osc. The above equation means that $x^*$ is a stationary point of \eqref{prob}. The inclusion $\omega(x^0)\subset\Gamma^*$ then follows. Recall that $F$ is continuous relative to $\Gamma$ and $\{x^k\}_{k\in\mathbb{N}}\subset\Gamma$. Therefore, $F(x^*)=\lim_{\mathcal{K}\ni k\to\infty}F(x^k)= \varpi^*$. 
\end{proof} 

We see that the multiplier sequence $\{\lambda^k\}_{k\in\mathbb{N}}$ yielded by Algorithm \ref{iMBA} does not take a part in the proof of the subsequential convergence. Instead, it will join in the application of Proposition \ref{prop-ebound} to bound the distance of the inexact solution $x^{k+1}$ of \eqref{subprob} from its unique solution $\overline{x}^k$ by $\|x^{k+1}\!-x^k\|$ as follows. This result, as will be shown in Section \ref{sec4.2}, plays a crucial role in achieving the full convergence of $\{x^k\}_{k\in\mathbb{N}}$. 
\begin{proposition}\label{prop-eboundk}
 Under Assumptions \ref{ass2}-\ref{ass4}, there exists $\widehat{\gamma}>0$ such that 
 \[
 \|x^{k+1}-\overline{x}^{k}\|\le\widehat{\gamma}\|x^{k+1}-x^k\|\quad{\rm for\ all}\ k\ge\widehat{k}.
\]
\end{proposition}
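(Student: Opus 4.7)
The plan is to apply the global error bound from Proposition~\ref{prop-ebound} to the subproblem \eqref{subprob}, which is precisely \eqref{para-prob} associated with parameter $u=u^k$. The required hypotheses are in place: $\theta(u^k,\cdot)=\vartheta_k$ is strongly convex with modulus at least $\mu_{\min}$ since $\mathcal{Q}_k\succeq\mu_{\min}\mathcal{I}$; each component $H_i(u^k,\cdot)=G_i(\cdot,x^k,V^k,L^k)$ is a convex quadratic; $x^k\in\Gamma_k$ so $\mathcal{S}(u^k)\neq\emptyset$; and Assumption~\ref{ass2} provides the subregularity of $\mathcal{H}_{u^k}$ at $(\overline{x}^k,0)$. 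Moreover, Proposition~\ref{prop-lambark} (crucially using Assumption~\ref{ass4}) supplies, for all $k\ge\widehat{k}$, a uniformly bounded $\overline{\lambda}^k\in\mathcal{M}(u^k,\overline{x}^k)\subset\mathcal{N}_{\mathbb{R}_-^m}(G(\overline{x}^k,x^k,V^k,L^k))$ with $\|\overline{\lambda}^k\|\le\beta_{\overline{\lambda}}$.

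Next I would invoke Proposition~\ref{prop-ebound} with $\overline{u}=u^k$, the $\ell_2$-norm on $\mathbb{R}^n$ and the $\ell_1$-norm on $\mathbb{R}^m$ (whose dual is the $\ell_\infty$-norm), and the primal-dual triple $(x,\lambda,v)=(x^{k+1},\lambda^{k+1},v^{k+1})$ produced by Algorithm~\ref{iMBA} (which satisfies $\lambda^{k+1}\in\mathbb{R}_+^m$ and $v^{k+1}\in\partial\phi(x^{k+1})$). Matching the resulting right-hand side against the definitions \eqref{resi-S}-\eqref{resi-C}: the first product becomes $\|x^{k+1}-\overline{x}^k\|\cdot S_k(x^{k+1},v^{k+1},\lambda^{k+1})$; the feasibility term is bounded by $\|\overline{\lambda}^k\|_1\cdot C_k(x^{k+1},\lambda^{k+1})\le\sqrt{m}\,\beta_{\overline{\lambda}}\,C_k$; and the remaining complementarity term satisfies $-\langle\lambda^{k+1},G(x^{k+1},x^k,V^k,L^k)\rangle\le(-\langle\lambda^{k+1},G(\cdot)\rangle)_+\le C_k$.

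Combining these with the inexactness criterion \eqref{inexact1}-\eqref{inexact2}, namely $S_k\le\beta_S\|x^{k+1}-x^k\|$ and $C_k\le(\beta_C/2)\|x^{k+1}-x^k\|^2$, and writing $a:=\|x^{k+1}-\overline{x}^k\|$ and $b:=\|x^{k+1}-x^k\|$, I obtain
\begin{equation*}
\mu_{\min}\,a^{2}\le\beta_S\,a\,b+C\,b^{2},\qquad C:=\tfrac{\beta_C}{2}\bigl(1+\sqrt{m}\,\beta_{\overline{\lambda}}\bigr).
\end{equation*}
Solving this quadratic inequality in $a$ yields $a\le\widehat{\gamma}\,b$ with the explicit constant $\widehat{\gamma}=\bigl(\beta_S+\sqrt{\beta_S^{2}+4\mu_{\min}C}\bigr)/(2\mu_{\min})$, which is exactly the claimed bound for all $k\ge\widehat{k}$.

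The main obstacle is not the algebra of the last paragraph but rather securing a uniformly bounded multiplier $\overline{\lambda}^k$ at the exact subproblem solutions: without such control, the $\|\overline{\lambda}^k\|\,\|[G]_+\|_\infty$ term in Proposition~\ref{prop-ebound} is useless, and one cannot convert $(S_k,C_k)$ into an error bound in $\|x^{k+1}-x^k\|$. This is precisely the role of the partial BMP (Assumption~\ref{ass4}) through Proposition~\ref{prop-lambark}; once those tools are in hand, assembling the estimate is a mechanical exercise.
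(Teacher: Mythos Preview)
Your proposal is correct and follows essentially the same route as the paper: apply Proposition~\ref{prop-ebound} at $(\overline{u},\overline{x})=(u^k,\overline{x}^k)$ with the bounded multiplier $\overline{\lambda}^k$ from Proposition~\ref{prop-lambark}, plug in $(x,v,\lambda)=(x^{k+1},v^{k+1},\lambda^{k+1})$, bound the right-hand side via \eqref{resi-S}--\eqref{resi-C} and the inexactness criterion \eqref{inexact1}--\eqref{inexact2}, and solve the resulting quadratic in $\|x^{k+1}-\overline{x}^k\|$. The only cosmetic difference is that the paper groups the feasibility and complementarity terms as $\max\{\|\overline{\lambda}^k\|_1,1\}\,C_k$ rather than your $(1+\sqrt{m}\,\beta_{\overline{\lambda}})\,C_k$, yielding a slightly different but equivalent explicit constant $\widehat{\gamma}$.
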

\begin{proof}
 Fix any $k\ge\widehat{k}$. From \eqref{theta-def}, every component of $H(u^k,\cdot)$ is a convex function with $H(u^k,x^k)=G(x^k,x^k,L^k,V^k)\in\mathbb{R}_{-}^m$, and $\theta(u^k,\cdot)=\vartheta_k(\cdot)$ is strongly convex with modulus $\mu_{\min}$. Using Proposition \ref{prop-ebound} with $(\overline{u},\overline{x})=(u^k,\overline{x}^k)$ for $\overline{\lambda}=\overline{\lambda}^k$ of Proposition \ref{prop-lambark}, and $(x,v,\lambda)=(x^{k+1},v^{k+1},\lambda^{k+1})\in\mathbb{R}^n\times\partial\phi(x^{k+1})\times\mathbb{R}_+^m$ yields
 \begin{align}\label{keybound0}
 \mu_{\min}\|x^{k+1}\!-\!\overline{x}^k\|^2
 &\le\|x^{k+1}-\overline{x}^k\|\|\nabla\vartheta_k(x^{k+1})+v^{k+1}+\nabla_{\!x} G(x^{k+1},x^k,L^k,V^k)\lambda^{k+1}\|\nonumber\\		
 & +\|\overline{\lambda}^k\|_1\|[G(x^{k+1},x^k,L^k,V^k)]_{+}\|_{\infty}-\langle\lambda^{k+1},G(x^{k+1},x^k,L^k,V^k)\rangle\nonumber\\
 &\le S_k(x^{k+1}\!,v^{k+1}\!,\lambda^{k+1})\|x^{k+1}\!-\!\overline{x}^k\|+\max\{\|\overline{\lambda}^k\|_1,1\}C_k(x^{k+1}\!,\lambda^{k+1})\nonumber\\		
 &\stackrel{\eqref{inexact2}}{\le}\beta_{S}\|x^{k+1}\!-\!x^k\|\|x^{k+1}\!-\!\overline{x}^k\|\!+\!\beta_{C}\max\{\|\overline{\lambda}^k\|_1,1\}/2\|x^{k+1}-x^k\|^2.
 \end{align}
 where the second inequality is by the expressions of $S_k$ and $C_k$ in \eqref{resi-S}-\eqref{resi-C}. Since $\|\overline{\lambda}^k\|_1\le\sqrt{m}\|\overline{\lambda}^k\|\le\sqrt{m}\beta_{\overline{\lambda}}$ by Proposition \ref{prop-lambark}, the above inequality implies $\|x^{k+1}\!-\!\overline{x}^k\|\le \widehat{\gamma}\|x^{k+1}-x^k\|$ with $\widehat{\gamma}= (2\mu_{\min})^{-1}[\beta_{S}\!+\!\sqrt{\beta_{S}^2\!+\!2\mu_{\min}\beta_{C}\max\{\sqrt{m}\beta_{\overline{\lambda}},1\}}]$.
\end{proof}

\subsection{Full convergence and convergence rate}\label{sec4.2}

As well known, potential functions play a key part in the full convergence analysis of algorithms for nonconvex and nonsmooth optimization (see, i.e., \cite{YuPongLv21,Artacho25}), and their construction usually depends on the structure of subproblems. Write $\mathbb{Z}:=\mathbb{R}^n\times\mathbb{R}^n\times\mathbb{R}^{n\times m}\times\mathbb{R}_{+}^m\times\mathbb{R}^n$, and for each $k\in\mathbb{N}$, let $z^k:=(\overline{x}^{k},x^k,V^k,L^k,\xi^k)\in\mathbb{Z}$. For our problem, a natural choice is 
\[
 \Psi(z):=\phi(x)+\underbrace{g_0(s)+\langle\xi,x\!-\!s\rangle}_{G_0(x,s,\xi)}+\delta_{\mathbb{R}_-^m}(G(x,s,V,L))\quad\forall z=(x,s,V,L,\xi)\in\mathbb{Z}.
\]
Unfortunately, due to the nonsmoothness of the mapping $G$, the exact characterization for $\partial\Psi(z^k)$ is unavailable, so it is impossible to establish the relative error condition at $z^k$ for $\Psi$. Of course, one may consider replacing $\partial\Psi(z^k)$ with the regular one $\widehat{\partial}\Psi(z^k)$, but Algorithm \ref{iMBA} does not yield the elements of the latter set, and the relative error condition at $z^k$ for $\Psi$ is still inaccessible. To overcome this difficulty, we exploit the structure of upper-$\mathcal{C}^2$ functions on a compact convex set to construct functions $T_0$ and $T$, which possess more favorable properties than $G_0$ and $G$, respectively.

Under Assumption \ref{ass3}, the sequence $\{x^k\}_{k\in\mathbb{N}}$ is bounded, so we can find a compact convex set $D\subset\mathcal{O}$ such that $\{x^k\}_{k\in\mathbb{N}}\subset D$. By Lemma \ref{lemma-major} (ii), for each $i\in[m]_{+}$,
\begin{equation}\label{Egifun}
 g_i(x)=f_i(x)-h_i(x)\quad\forall x\in D,  
\end{equation}
where $f_i:\mathbb{R}^n\to\mathbb{R}$ and $h_i:\mathbb{R}^n\to\mathbb{R}$ are respectively a quadratic strongly convex function and a strongly convex function. For any $x\in\mathbb{R}^n$ and $Z\in\mathbb{R}^{n\times m}$, write
\[
  f(x):=(f_1(x),\ldots,f_m(x))^{\top}\ \ {\rm and}\ \ h^*(Z):=(h_1^*(Z_{1}),\ldots,h_m^*(Z_{m}))^{\top}.
\]
Define $T_0\!:\mathbb{R}^n\times\mathbb{R}^n\times\mathbb{R}^n\to\mathbb{R}$ and $T:\mathbb{R}^n\times\mathbb{R}^n\times\mathbb{R}^{n\times m}\times\mathbb{R}_{+}^m\to\mathbb{R}^m$ by
\begin{subequations}
\begin{align}\label{T0map}
 T_0(x,s,\xi)&:=\langle\xi,x\rangle+f_0(s)-\langle\nabla \!f_0(s),s\rangle+h_0^*(-\xi+\nabla\!f_0(s)),\\
 T(x,s,V,L)&:= V^{\top}x+f(s)-\mathcal{J}\!f(s)s+h^*(-V\!+\!\nabla\!f(s))+\frac{1}{2}\|x-s\|^2L.
 \label{Tmap}
\end{align}
\end{subequations}
Then, for each $k\in\mathbb{N}$, the following relations hold for the functions $T_0$ and $T$: 
\begin{subequations}
\begin{align}
 \label{GT0map} T_0(\overline{x}^k,x^k,\xi^k)&=g_0(x^k)+\langle\xi^k,\overline{x}^k\!-\!x^k\rangle,\\ T(\overline{x}^k,x^k,V^k,L^k)&=G(\overline{x}^k,x^k,V^k,L^k)\in\mathbb{R}_{-}^m.
 \label{GTmap}
\end{align}
\end{subequations}
Indeed, for each $k\in\mathbb{N}$, $\xi^k\in\partial g_0(x^k)=\nabla\!f_0(x^k)+\partial(-h_0)(x^k)$ and $V_{i}^k\in\partial g_i(x^k)=\nabla\!f_i(x^k)+\partial(-h_i)(x^k)$ for all $i\in[m]$, which in turn implies $\nabla\!f_0(x^k)-\xi^k\in\partial h_0(x^k)$ and $\nabla\!f_i(x^k)\!-\!V_{i}^k\in\partial h_i(x^k)$. Together with the convexity of $h_i$, it follows 
\begin{subequations}
\begin{align}\label{h0-equa}
 h_0(x^k)+h_0^*(\nabla\!f_0(x^k)-\xi^k)&=\langle\nabla\!f_0(x^k)-\xi^k,x^k\rangle,\\
 h_i(x^k)+h_i^*(\nabla\!f_i(x^k)\!-\!V_{i}^k)&=\langle\nabla\!f_i(x^k)\!-\!V_{i}^k,x^k\rangle\quad{\rm for}\ i\in[m].
 \label{hi-equa}
\end{align}    
\end{subequations}
Combining \eqref{h0-equa}-\eqref{hi-equa} with $g_i=f_i-h_i$ for each $i\in[m]_+$ leads to \eqref{GT0map} and 
\[
 \langle V_{i}^k,\overline{x}^k\rangle+f_i(x^k)-\langle\nabla\!f_i(x^k),x^k\rangle+h_i^*(\nabla\!f_i(x^k)\!-\!V_{i}^k)=g_i(x^k)+\langle V_{i}^k,\overline{x}^k-x^k\rangle.
\]
Adding $({L_i^k}/{2})\|\overline{x}^k-x^k\|^2$ to the both sides of the above equality yields \eqref{GTmap}. 

Inspired by the relations in \eqref{GT0map}-\eqref{GTmap}, we define the following function 
\begin{equation}\label{Phi-fun}
\Phi(z):=\phi(x)+T_0(x,s,\xi)+\delta_{\mathbb{R}_-^m}(T(x,s,V,L))\quad\forall z\in\mathbb{Z}.
\end{equation}
Then, as shown by Lemma \ref{lemma-potential} below, $\Phi(z^k)$ has the close relation with $F(x^{k})$. 
\begin{lemma}\label{lemma-potential}
 Under Assumptions \ref{ass2}-\ref{ass4}, there exists a constant $\rho_0\ge 0$ depending only on $g_0$ and $D$ such that for all $k\ge\widehat{k}$, with $\widetilde{c}:=[\beta_{\mathcal{Q}}(\widehat{\gamma}^2\!+\!1)\!+\!\beta_{S}\widehat{\gamma}+{\beta_{C}/2}\!+\!\rho_0]$, 
\[
 F(x^{k+1})-\widetilde{c}\,\|x^{k+1}-x^k\|^2\le \Phi(z^k)\le F(x^k).
\]
\end{lemma}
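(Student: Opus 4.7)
The plan is to split the chain into its two halves and argue each separately; the upper bound is essentially a rewriting, while the lower bound is where the inexactness criterion, Proposition \ref{prop-eboundk}, and the convexity of the surrogate constraint $G(\cdot,x^k,V^k,L^k)$ must cooperate. First, substitute $z^k=(\overline{x}^k,x^k,V^k,L^k,\xi^k)$ into \eqref{Phi-fun} and invoke the identities \eqref{GT0map}--\eqref{GTmap}: since $T(\overline{x}^k,x^k,V^k,L^k)\in\mathbb{R}_{-}^m$ the indicator term vanishes, and
\[
\Phi(z^k)=\phi(\overline{x}^k)+g_0(x^k)+\langle\xi^k,\overline{x}^k-x^k\rangle=F_k(\overline{x}^k)-\tfrac{1}{2}\langle\overline{x}^k-x^k,\mathcal{Q}_k(\overline{x}^k-x^k)\rangle.
\]
Because $x^k\in\Gamma_k$ (Lemma \ref{lemma-welldef}) and $\overline{x}^k$ is the minimizer of $F_k$ on $\Gamma_k$, $F_k(\overline{x}^k)\le F_k(x^k)=F(x^k)$; together with $\mathcal{Q}_k\succeq\mu_{\min}\mathcal{I}\succ 0$, this yields $\Phi(z^k)\le F(x^k)$.

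For the lower bound I will estimate $F(x^{k+1})-\Phi(z^k)=[g_0(x^{k+1})-g_0(x^k)-\langle\xi^k,\overline{x}^k-x^k\rangle]+[\phi(x^{k+1})-\phi(\overline{x}^k)]$. Fix a compact convex set $D\subset\mathcal{O}$ with $\{x^k\}\subset D$ (available by Assumption \ref{ass3}); applying Lemma \ref{lemma-major}(ii) to $g_0$ on $D$ produces $\rho_0\ge 0$ with $g_0(x^{k+1})-g_0(x^k)\le\langle\xi^k,x^{k+1}-x^k\rangle+\tfrac{\rho_0}{2}\|x^{k+1}-x^k\|^2$, while the convexity of $\phi$ together with $v^{k+1}\in\partial\phi(x^{k+1})$ gives $\phi(x^{k+1})-\phi(\overline{x}^k)\le\langle v^{k+1},x^{k+1}-\overline{x}^k\rangle$. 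Combining,
\[
F(x^{k+1})-\Phi(z^k)\le\langle\xi^k+v^{k+1},x^{k+1}-\overline{x}^k\rangle+\tfrac{\rho_0}{2}\|x^{k+1}-x^k\|^2.
\]
The inexactness condition \eqref{inexact2} supplies $r^{k+1}:=\nabla\vartheta_k(x^{k+1})+v^{k+1}+\nabla_{\!x}G(x^{k+1},x^k,V^k,L^k)\lambda^{k+1}$ with $\|r^{k+1}\|\le\beta_S\|x^{k+1}-x^k\|$, so
\[
\xi^k+v^{k+1}=r^{k+1}-\mathcal{Q}_k(x^{k+1}-x^k)-\nabla_{\!x}G(x^{k+1},x^k,V^k,L^k)\lambda^{k+1},
\]
and I will bound the three resulting inner products with $x^{k+1}-\overline{x}^k$ in turn.

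The first two bounds are routine: Cauchy--Schwarz combined with Proposition \ref{prop-eboundk} gives $\langle r^{k+1},x^{k+1}-\overline{x}^k\rangle\le\beta_S\widehat{\gamma}\|x^{k+1}-x^k\|^2$, and Lemma \ref{lemma-bound}(iii) together with Young's inequality and Proposition \ref{prop-eboundk} yields $-\langle\mathcal{Q}_k(x^{k+1}-x^k),x^{k+1}-\overline{x}^k\rangle\le\beta_{\mathcal{Q}}\|x^{k+1}-x^k\|\|x^{k+1}-\overline{x}^k\|\le\tfrac{\beta_{\mathcal{Q}}(\widehat{\gamma}^2+1)}{2}\|x^{k+1}-x^k\|^2$. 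The hard part is the third piece, $-\langle\nabla_{\!x}G(x^{k+1},x^k,V^k,L^k)\lambda^{k+1},x^{k+1}-\overline{x}^k\rangle$, which contains no apparent size control. The trick is to exploit the convexity of each component $G_i(\cdot,x^k,V^k,L^k)$ (a convex quadratic): combined with $\lambda^{k+1}\in\mathbb{R}_+^m$ this produces
\[
-\langle\nabla_{\!x}G(x^{k+1},x^k,V^k,L^k)\lambda^{k+1},x^{k+1}-\overline{x}^k\rangle\le\langle\lambda^{k+1},G(\overline{x}^k,x^k,V^k,L^k)-G(x^{k+1},x^k,V^k,L^k)\rangle.
\]
Feasibility $\overline{x}^k\in\Gamma_k$ and $\lambda^{k+1}\ge 0$ make the first summand on the right nonpositive, and the second is controlled by $C_k(x^{k+1},\lambda^{k+1})\le\tfrac{\beta_C}{2}\|x^{k+1}-x^k\|^2$ by \eqref{inexact1}. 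Summing all three bounds and the $\rho_0$ term produces a constant no larger than $\tilde{c}=\beta_{\mathcal{Q}}(\widehat{\gamma}^2+1)+\beta_S\widehat{\gamma}+\beta_C/2+\rho_0$, completing the argument. The one place real care is needed is this third bound, since it is what prevents $\lambda^{k+1}$ (whose size is not a priori controlled) from entering the final estimate.
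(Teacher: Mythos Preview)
Your argument is correct and uses the same essential ingredients as the paper: the upper-$\mathcal{C}^2$ bound for $g_0$ on the compact set $D$, the inexactness conditions \eqref{inexact1}--\eqref{inexact2}, Proposition~\ref{prop-eboundk}, and crucially the convexity of each $G_i(\cdot,x^k,V^k,L^k)$ together with $\lambda^{k+1}\ge 0$ and $\overline{x}^k\in\Gamma_k$ to absorb the multiplier term. The organization differs slightly: the paper first routes through the already-established inequality \eqref{sub-optval} (itself the Lagrangian-convexity trick) to obtain $F_k(x^{k+1})-F_k(\overline{x}^k)\le(\beta_S\widehat{\gamma}+\beta_C/2)\|x^{k+1}-x^k\|^2$ and then manipulates the $F_k$ expressions, whereas you decompose $\xi^k+v^{k+1}$ directly via the KKT residual and bound the three inner products separately. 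Your route is a bit more explicit and in fact yields a slightly smaller constant (you get $\tfrac{\beta_{\mathcal{Q}}(\widehat{\gamma}^2+1)}{2}+\tfrac{\rho_0}{2}$ where the paper has $\beta_{\mathcal{Q}}(\widehat{\gamma}^2+1)+\rho_0$), but this is immaterial since only an upper bound by $\widetilde{c}$ is needed.
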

\begin{proof}
For each $k\ge\widehat{k}$, combining \eqref{sub-optval} with the inexactness condition \eqref{inexact2} and the second inequality of \eqref{inexact1} and using Proposition \ref{prop-eboundk}, we obtain 
\[
 F_k(x^{k+1})-F_k(\overline{x}^k)\le \beta_{F}\|x^{k+1}-x^k\|^2\ \ {\rm with}\ \beta_{F}:=\beta_{S}\widehat{\gamma}+{\beta_{C}/2}.
\]
 Together with the expression of $F_{k}$ and Lemma \ref{lemma-bound}, it follows
\begin{align}\label{potential-temp1}
 &\quad g_0(x^k)\!+\!\langle\xi^k,x^{k+1}\!-\!x^k\rangle\!+\!\phi(x^{k+1})\le F_{k}(\overline{x}^k)+{\beta_F}\|x^{k+1}-x^k\|^2\nonumber\\
&\le g_0(x^k)+\langle\xi^k,\overline{x}^k-x^k\rangle+\phi(\overline{x}^k)+({\beta_{\mathcal{Q}}}/{2})\|\overline{x}^k-x^k\|^2+\beta_F\|x^{k+1}-x^k\|^2.
\end{align}
From Lemma \ref{lemma-major} (ii) and the above \eqref{Egifun}, there exists a constant $\rho_{0}>0$ such that 
\begin{equation}\label{F0-ineq1}
 g_0(x^{k+1})\le g_0(x^k)+\langle\xi^k,x^{k+1}-x^k\rangle+\rho_{0}\|x^{k+1}-x^k\|^2\ \ {\rm for\ all}\ k\in\mathbb{N}.
\end{equation}
Now combining the expression of $F$ and the above inequalities \eqref{potential-temp1}-\eqref{F0-ineq1} yields
\begin{align*}
 F(x^{k+1})&\stackrel{\eqref{F0-ineq1}}{\le} g_0(x^k)+\langle \xi^k,x^{k+1}-x^k\rangle+{\rho_0}\|x^{k+1}-x^k\|^2+\phi(x^{k+1})\nonumber\\
 &\stackrel{\eqref{potential-temp1}}{\le} g_0(x^k)+\langle\xi^k,\overline{x}^k\!-\!x^k\rangle+\phi(\overline{x}^k)\!+\!\frac{\beta_{\mathcal{Q}}}{2}\|\overline{x}^k\!-\!x^k\|^2\!+\!(\beta_F\!+\!\rho_0)\|x^{k+1}\!-\!x^k\|^2\nonumber\\
&\le g_0(x^k)\!+\!\langle\xi^k,\overline{x}^k\!-\!x^k\rangle\!+\!\phi(\overline{x}^k)\!+\!\beta\!_{\mathcal{Q}}\|x^{k+1}\!-\!\overline{x}^k\|^2\!+\!(\beta\!_{\mathcal{Q}}\!+\!\beta_F\!+\!\rho_0)\|x^{k+1}\!-\!x^k\|^2\\
&\le g_0(x^k)+\langle\xi^k,\overline{x}^k\!-\!x^k\rangle+\phi(\overline{x}^k)+[\beta_{\mathcal{Q}}(\widehat{\gamma}^2\!+\!1)\!+\!\beta_F\!+\!\rho_0]\|x^{k+1}\!-\!x^k\|^2\\
&=\Phi(z^k)+[\beta_{\mathcal{Q}}(\widehat{\gamma}^2\!+\!1)\!+\!\beta_F\!+\!\rho_0]\|x^{k+1}\!-\!x^k\|^2
\end{align*}
where the fourth inequality is due to Proposition \ref{prop-eboundk}, and the equality is obtained by using \eqref{GT0map}-\eqref{GTmap} and the expression of $\Phi$. This proves the first inequality. The second inequality follows by noting that $F(x^k)=F_k(x^k)\ge F_k(\overline{x}^k)\ge F_k(\overline{x}^k)\!-\!\frac{1}{2}\langle\overline{x}^k\!-\!x^k,\mathcal{Q}_k(\overline{x}^k\!-\!x^k)\rangle =g_0(x^k)\!+\!\langle\xi^k,\overline{x}^k\!-\!x^k\rangle+\phi(\overline{x}^k)=T_0(\overline{x}^k,x^k,\xi^k)+\phi(\overline{x}^k)=\Phi(z^k)$.
\end{proof}

Motivated by Lemma \ref{lemma-potential}, we define the potential function $\Phi_{\widetilde{c}}:\mathbb{Z}\times\mathbb{R}^n\to\overline{\mathbb{R}}$ by 
\begin{equation}\label{Phic-def}
\Phi_{\widetilde{c}}(w):=\Phi(z)+\widetilde{c}\,\|d\|^2\quad\forall w=(z,d)\in\mathbb{Z}\times\mathbb{R}^n,
\end{equation}
where $\widetilde{c}$ is the same as in Lemma \ref{lemma-potential}. For each $k\in\mathbb{N}$, let $w^k\!:=(z^k,x^{k+1}\!-\!x^k)$. We next prove that $\Phi_{\widetilde{c}}$ keeps unchanged on the set of cluster points of $\{w^k\}_{k\in\mathbb{N}}$.  
\begin{proposition}\label{prop-ZWstar}
 Denote by $\mathcal{Z}^*$ the set of cluster points of $\{z^k\}_{k\in\mathbb{N}}$, and by $\mathcal{W}^*$ the set of cluster points of $\{w^k\}_{k\in\mathbb{N}}$. Then, under Assumptions \ref{ass2}-\ref{ass4}, 
 \begin{itemize}
 \item[(i)] the sets $\mathcal{Z}^*$ and $\mathcal{W}^*$ are nonempty and compact with $\mathcal{W}^*=\mathcal{Z}^*\times\{0\}$; 

 \item[(ii)] for each $z^*=(\overline{x}^*,x^*,V^*,L^*,\xi^*)\in\mathcal{Z}^*$, it holds $\overline{x}^*=x^*\in\omega(x^0)$ and  
 \begin{subequations}
 \begin{align}\label{relation1-sol}  &T_0(\overline{x}^*,x^*,\xi^*)\!=\!g_0(x^*),\,T(\overline{x}^*,x^*,V^*,L^*)\!=\!g(x^*)\in\mathbb{R}_-^m,\\
 &\nabla h_0(-\xi^*+\!\nabla\!f_0(x^*))=x^*= 
 \nabla h_i(-V_{i}^*+\!\nabla\!f_i(x^*))\ {\rm for}\ i\in[m]; 
 \label{relation2-sol}
 \end{align}
 \end{subequations}
 
 \item[(iii)] $\Phi(z^*)=\varpi^*=\Phi_{\widetilde{c}}(w^*)$ for all $z^*\in\mathcal{Z}^*$ and $w^*\in\mathcal{W}^*$;

 \item[(iv)] $\lim_{k\to\infty}\Phi(z^k)=\varpi^*=\lim_{k\to\infty}\Phi_{\widetilde{c}}(w^k)$.
 \end{itemize}
\end{proposition}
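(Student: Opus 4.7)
The plan is to treat the four assertions in order, leveraging results already established in Sections \ref{sec3}--\ref{sec4.1}. For part (i), I would first observe that $\{z^k\}_{k\in\mathbb{N}}$ lies in a bounded subset of $\mathbb{Z}$: the coordinates $x^k$ and $\overline{x}^k$ are bounded by Assumption \ref{ass3} and Lemma \ref{lemma-bound}(i), the multipliers $V^k,\xi^k$ by Lemma \ref{lemma-bound}(i), and $L^k$ by Lemma \ref{lemma-bound}(ii). This ensures $\mathcal{Z}^*$ is nonempty and compact. Then $\|x^{k+1}-x^k\|\to 0$ from Corollary \ref{corollary-objF} forces the last component of every cluster point of $\{w^k\}$ to be zero, giving $\mathcal{W}^*=\mathcal{Z}^*\times\{0\}$, which is also nonempty and compact.

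For part (ii), fix $z^{*}=(\overline{x}^*,x^*,V^*,L^*,\xi^*)\in\mathcal{Z}^*$ and a subsequence $\{z^{k_j}\}\to z^{*}$. Since $x^{k_j}\to x^*$, Lemma \ref{lemma-xbark} yields $\overline{x}^{k_j}\to x^*$, so $\overline{x}^*=x^*\in\omega(x^0)$. Passing to the limit in \eqref{GT0map}--\eqref{GTmap} and using continuity of $g_0$ and $g$ on $D$ delivers \eqref{relation1-sol}. For \eqref{relation2-sol}, I would use the decomposition $g_i=f_i-h_i$ on $D$ from \eqref{Egifun}, where $h_i$ is strongly convex and therefore $h_i^{*}$ is continuously differentiable. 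Since $\xi^{k_j}\in\partial g_0(x^{k_j})$, the Fenchel--Young identity \eqref{h0-equa} is equivalent to $x^{k_j}=\nabla h_0^{*}(-\xi^{k_j}+\nabla f_0(x^{k_j}))$ (and similarly columnwise from \eqref{hi-equa} for each $V_i^{k_j}$); letting $j\to\infty$ and invoking continuity of $\nabla f_i$ and $\nabla h_i^{*}$ yields the stated identities (reading $\nabla h_i$ in \eqref{relation2-sol} as $\nabla h_i^{*}$).

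For part (iii), substituting the relations of (ii) into the definition \eqref{Phi-fun} of $\Phi$ collapses it to
\[
\Phi(z^*)=\phi(x^*)+g_0(x^*)+\delta_{\mathbb{R}_-^m}(g(x^*))=F(x^*),
\]
and Theorem \ref{theorem-sconverge} gives $F(x^*)=\varpi^*$. Because any $w^*\in\mathcal{W}^*$ has the form $(z^*,0)$ with $z^*\in\mathcal{Z}^*$, the definition \eqref{Phic-def} yields $\Phi_{\widetilde{c}}(w^*)=\Phi(z^*)=\varpi^*$. Part (iv) follows by sandwiching: Lemma \ref{lemma-potential} gives, for all $k\ge\widehat{k}$,
\[
F(x^{k+1})-\widetilde{c}\,\|x^{k+1}-x^k\|^2\le\Phi(z^k)\le F(x^k),
\]
and passing $k\to\infty$ with $\|x^{k+1}-x^k\|\to 0$ and $F(x^k)\to\varpi^*$ (Corollary \ref{corollary-objF}) gives $\Phi(z^k)\to\varpi^*$; adding $\widetilde{c}\|x^{k+1}-x^k\|^2\to 0$ yields $\Phi_{\widetilde{c}}(w^k)\to\varpi^*$.

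The main obstacle I anticipate is the careful derivation of \eqref{relation2-sol} in part (ii): the natural subgradient inclusions produced by Algorithm \ref{iMBA} live in $\partial h_i$, and reformulating them via Fenchel--Young as equalities of the form $x^k=\nabla h_i^{*}(\cdot)$ requires the strong convexity of $h_i$ (and hence the single-valued smoothness of $h_i^{*}$) guaranteed by Lemma \ref{lemma-major}(ii) on the compact convex set $D$ containing $\{x^k\}$. The remaining steps are essentially continuity plus the limit-inferior/superior sandwich, and pose no conceptual difficulty once part (ii) is in place.
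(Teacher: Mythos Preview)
Your proposal is correct and follows essentially the same route as the paper: part (i) from Lemma \ref{lemma-bound} and Corollary \ref{corollary-objF}; part (ii) from Lemma \ref{lemma-xbark} together with the continuity of $T_0,T$ and the Fenchel--Young identities \eqref{h0-equa}--\eqref{hi-equa}; part (iii) from part (ii) and Theorem \ref{theorem-sconverge}; and part (iv) by the sandwich in Lemma \ref{lemma-potential} with Corollary \ref{corollary-objF}. Your parenthetical remark that \eqref{relation2-sol} should be read with $\nabla h_i^{*}$ rather than $\nabla h_i$ is also on target---the paper itself uses $\nabla h_i^{*}(\nabla f_i(x^k)-V_i^k)=x^k$ in the proof of Proposition \ref{prop-relgap}, so this is a typo in the statement.
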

\begin{proof}
 Part (i) follows Lemma \ref{lemma-bound} and Corollary \ref{corollary-objF}. For parts (ii)-(iii), consider any $z^*=(\overline{x}^*,x^*,V^*,L^*,\xi^*)\in\mathcal{Z}^*$. Then there exists an index set $\mathcal{K}\subset\mathbb{N}$ such that $\lim_{\mathcal{K}\ni k\to\infty}z^k=z^*$, so  $x^*\in\omega(x^0)$. By Lemma \ref{lemma-xbark}, $\overline{x}^*=x^*$. From \eqref{GT0map}-\eqref{GTmap}, \eqref{h0-equa}-\eqref{hi-equa}, and the continuity of $T$ and $T_0$, we obtain \eqref{relation1-sol}-\eqref{relation2-sol}. From the expression of $\Phi$ and \eqref{relation1-sol}, we have $\Phi(z^*)=F(x^*)=\varpi^*$, where the last equality is by Theorem \ref{theorem-sconverge} and $x^*\in\omega(x^0)$, and for any $w^*\in\mathcal{W}^*$, $\Phi_{\widetilde{c}}(w^*)=\Phi(z^*)$ follows part (i). Part (iv) directly follows Lemma \ref{lemma-potential} and Corollary \ref{corollary-objF}. 
 \end{proof}

To achieve the full convergence of the sequence $\{x^k\}_{k\in\mathbb{N}}$, we also need to measure the approximate stationarity of its augmented sequence $\{w^k\}_{k\in\mathbb{N}}$ by $\|x^{k+1}\!-x^k\|$. 
\begin{proposition}\label{prop-relgap}
 Under Assumptions \ref{ass2}-\ref{ass4}, there is a constant $\gamma>0$ such that 
 \[
  {\rm dist}(0,\partial\Phi_{\widetilde{c}}(w^k))\le{\rm dist}(0,\widehat{\partial}\Phi_{\widetilde{c}}(w^k))\le\gamma\|x^{k+1}-x^k\|\quad{\rm for\ all}\ k\ge\widehat{k}.
 \]
\end{proposition}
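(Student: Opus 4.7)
The first inequality is immediate from $\widehat{\partial}\Phi_{\widetilde{c}}(w^k)\subseteq\partial\Phi_{\widetilde{c}}(w^k)$. For the second, my plan is to exhibit a concrete element $\eta^k\in\widehat{\partial}\Phi_{\widetilde{c}}(w^k)$ of norm at most a $k$-independent multiple of $\|x^{k+1}-x^k\|$, using the multiplier $\overline{\lambda}^k$ from Proposition~\ref{prop-lambark} and an associated $\overline{v}^k\in\partial\phi(\overline{x}^k)$ supplied by the KKT system of the $k$th subproblem at its exact solution $\overline{x}^k$.

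First I would invoke a Fr\'echet chain/sum rule. Since $T_0$ and $T$ are $\mathcal{C}^1$, $\phi$ is convex (so $\widehat{\partial}\phi=\partial\phi$), and $d\mapsto\widetilde{c}\|d\|^2$ is smooth, the standard inclusion $\mathcal{J}T(z^k)^*\widehat{\mathcal{N}}_{\mathbb{R}_-^m}(T(z^k))\subseteq\widehat{\partial}[\delta_{\mathbb{R}_-^m}\circ T](z^k)$ (valid for any $\mathcal{C}^1$ outer map because the first-order Taylor remainder preserves the sign constraint to leading order) yields that for every $\overline{v}^k\in\partial\phi(\overline{x}^k)$ and $\lambda\in\mathcal{N}_{\mathbb{R}_-^m}(T(z^k))$, the vector with coordinates $\overline{v}^k+\nabla_{x}T_0(z^k)+\nabla_{x}T(z^k)\lambda$ in $x$, $\nabla_{s}T_0(z^k)+\nabla_{s}T(z^k)\lambda$ in $s$, $\nabla_{V}T(z^k)\lambda$ in $V$, $\nabla_{L}T(z^k)\lambda$ in $L$, $\nabla_{\xi}T_0(z^k)$ in $\xi$, and $2\widetilde{c}(x^{k+1}-x^k)$ in $d$, belongs to $\widehat{\partial}\Phi_{\widetilde{c}}(w^k)$.

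Next I would take $\lambda=\overline{\lambda}^k\in\mathcal{M}(u^k,\overline{x}^k)$ and set $\overline{v}^k$ to be the KKT multiplier of $\phi$ at $\overline{x}^k$, i.e.,
\[
\overline{v}^k=-\xi^k-\mathcal{Q}_k(\overline{x}^k-x^k)-V^k\overline{\lambda}^k-\langle L^k,\overline{\lambda}^k\rangle(\overline{x}^k-x^k).
\]
The computational core is to evaluate the partial derivatives of $T_0$ and $T$ at $z^k$: since $\xi^k\in\partial g_0(x^k)$ and $V_i^k\in\partial g_i(x^k)$, the Fenchel identities \eqref{h0-equa}-\eqref{hi-equa} already invoked in the construction of $\Phi$ force $x^k=\nabla h_0^*(\nabla f_0(x^k)-\xi^k)=\nabla h_i^*(\nabla f_i(x^k)-V_i^k)$ for each $i\in[m]$. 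Plugging these in causes substantial cancellation: $\nabla_{s}T_0(z^k)=0$, $\nabla_{x}T_0(z^k)=\xi^k$, $\nabla_{\xi}T_0(z^k)=\overline{x}^k-x^k$, and $\nabla_{x}T(z^k)\overline{\lambda}^k=V^k\overline{\lambda}^k+\langle L^k,\overline{\lambda}^k\rangle(\overline{x}^k-x^k)$, $\nabla_{s}T(z^k)\overline{\lambda}^k=-\langle L^k,\overline{\lambda}^k\rangle(\overline{x}^k-x^k)$, while $\nabla_{V}T(z^k)\overline{\lambda}^k$ is the matrix with $j$th column $\overline{\lambda}_j^k(\overline{x}^k-x^k)$ and $[\nabla_{L}T(z^k)\overline{\lambda}^k]_j=\tfrac{1}{2}\overline{\lambda}_j^k\|\overline{x}^k-x^k\|^2$. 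With the chosen $\overline{v}^k$, the $x$-component collapses to $-\mathcal{Q}_k(\overline{x}^k-x^k)$.

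Finally I would bound every component. Using $\|\mathcal{Q}_k\|\le\beta_{\mathcal{Q}}$ and $\|L^k\|\le\beta_{L}$ (Lemma~\ref{lemma-bound}), $\|\overline{\lambda}^k\|\le\beta_{\overline{\lambda}}$ (Proposition~\ref{prop-lambark}), the uniform bound on $\|\overline{x}^k-x^k\|$ inherited from Lemma~\ref{lemma-xbark}, and the triangle-inequality consequence of Proposition~\ref{prop-eboundk},
\[
\|\overline{x}^k-x^k\|\le\|\overline{x}^k-x^{k+1}\|+\|x^{k+1}-x^k\|\le(\widehat{\gamma}+1)\|x^{k+1}-x^k\|,
\]
each coordinate is dominated by a constant times $\|x^{k+1}-x^k\|$; the $\|\overline{x}^k-x^k\|^2$ factor in $\nabla_{L}T(z^k)\overline{\lambda}^k$ is absorbed because one power of $\|\overline{x}^k-x^k\|$ is uniformly bounded. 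Summing yields $\|\eta^k\|\le\gamma\|x^{k+1}-x^k\|$ for a $k$-independent $\gamma$. The main obstacle is the careful assembly of the chain/sum rule for $\widehat{\partial}\Phi_{\widetilde{c}}(w^k)$ together with the verification that the Fenchel identities cause the $x$-component to collapse precisely to the single clean term $-\mathcal{Q}_k(\overline{x}^k-x^k)$; the remaining estimates are then routine.
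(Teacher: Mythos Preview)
Your proposal is correct and follows essentially the same approach as the paper: both construct an explicit element of $\widehat{\partial}\Phi_{\widetilde{c}}(w^k)$ via the Fr\'echet sum/chain-rule inclusion $\Delta(z^k)\subset\widehat{\partial}\Phi(z^k)$, take $\lambda=\overline{\lambda}^k$ from Proposition~\ref{prop-lambark} together with the corresponding $\overline{v}^k\in\partial\phi(\overline{x}^k)$ from the KKT system, use the Fenchel identities to make the $s$-derivative of $T_0$ vanish and to reduce the $x$-block to $-\mathcal{Q}_k(\overline{x}^k-x^k)$, and then bound all blocks by $(\widehat{\gamma}+1)\|x^{k+1}-x^k\|$ via Proposition~\ref{prop-eboundk}. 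The only point you leave implicit is that $\overline{\lambda}^k\in\mathcal{N}_{\mathbb{R}_-^m}(T(z^k))$, which follows immediately from \eqref{GTmap}.
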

\begin{proof}
 By \cite[Theorem 10.6]{RW98}, at any $z=(x,s,V,L,\xi)\in\mathbb{Z}$ for $T(x,s,V,L)\in\mathbb{R}_{-}^m$,  
 \begin{align}\label{Deltaz}  
 \partial\Phi(z)\supset\!\widehat{\partial}\Phi(z)&\supset\{\nabla\!_{x}T_0(x,s,\xi)\!+\!\partial\phi(x)\}\!\times\!\{\nabla\!_{s}T_0(x,s,\xi)\}\!\times\!\{0_{\mathbb{R}^{n\times m}}\}\!\times\{0_{\mathbb{R}^m}\}\nonumber\\
 &\times\!\{\nabla\!_{\xi}T_0(x,s,\xi)\}+\nabla T(x,s,V,L)\mathcal{N}_{\mathbb{R}_{-}^m}(T(x,s,V,L))\times\{0_{\mathbb{R}^m}\}:=\Delta(z).
 \end{align}
Fix any $k\ge\widehat{k}$. Let $\overline{\lambda}^k\!\in\mathcal{M}(u^k,\overline{x}^k)$ be the same as in Proposition \ref{prop-lambark} with $u^k\!=(x^k,V^k,L^k,\xi^k,\mathcal{Q}_k)$. By the expression of $\mathcal{M}$ in \eqref{para-multiplier} with $\theta$ and $H$ from \eqref{theta-def}, there exists $\overline{v}^k\in\partial\phi(\overline{x}^k)$ such that 
\begin{subequations}
 \begin{align}
 &\xi^k+\mathcal{Q}_k(\overline{x}^{k}\!-\!x^k)+\overline{v}^k+V^k\overline{\lambda}^{k}+\langle L^k,\overline{\lambda}^k\rangle(\overline{x}^{k}\!-\!x^k)=0, \label{subkkt1} \\
 &\overline{\lambda}^k\in\mathcal{N}_{\mathbb{R}_{-}^m}(G(\overline{x}^k,x^k,V^k,L^k))\stackrel{\eqref{GTmap}}{=}\mathcal{N}_{\mathbb{R}_{-}^m}(T(\overline{x}^k,x^k,V^k,L^k)). 
 \label{subkkt2}
\end{align}
\end{subequations} 
Combining \cite[Theorem 23.5]{Roc70} with \eqref{h0-equa}-\eqref{hi-equa}, it holds $\nabla h_0^*(\nabla\!f_0(x^k)\!-\!\xi^k)=x^k=\nabla h_i^*(\nabla\!f_i(x^k)\!-\!V_{i}^k)$ for each $i\in[m]$. Consequently, $\nabla\!_xT_0(\overline{x}^k,x^k,\xi^k)\!=\!\xi^k$, $\nabla\!_sT_0(\overline{x}^k,x^k,\xi^k)\!=\!0$, $\nabla\!_{\xi}T_0(\overline{x}^k,x^k,\xi^k)\!=\!\overline{x}^k-x^k$ and for any $\lambda\in\mathcal{N}_{\mathbb{R}_{-}^m}(T(\overline{x}^k\!,x^k\!,V^k\!,L^k))$, 
\begin{align*}
\nabla\!_{x}T(\overline{x}^k\!,x^k\!,V^k\!,L^k)\lambda\!=\!V^k\lambda\!+\!\langle L^k,\lambda\rangle(\overline{x}^k\!-\!x^k),\ 
\nabla\!_{s}T(\overline{x}^k\!,x^k\!,V^k\!,L^k)\lambda\!=\!\langle L^k,\lambda\rangle(x^k\!-\!\overline{x}^k),\\
\nabla\!_{V}T(\overline{x}^k\!,x^k\!,V^k\!,L^k)\lambda=(\overline{x}^k\!-\!x^k)\lambda^{\top},\ \nabla\!_{L}T(\overline{x}^k\!,x^k\!,V^k\!,L^k)\lambda\!=\!(1/2)\|\overline{x}^k\!-\!x^k\|^2\lambda.\qquad
\end{align*}
Using the above relations and the expression of $\Delta(z)$ in \eqref{Deltaz}, we immediately obtain
\[
\eta^k:=\begin{pmatrix}
 \xi^k+\overline{v}^k+V^k\overline{\lambda}^k+\langle L^k,\overline{\lambda}^k\rangle(\overline{x}^k-x^k)\\
 \langle L^k,\overline{\lambda}^k\rangle(x^k-\overline{x}^k)\\
 (\overline{x}^k-x^k)(\overline{\lambda}^k)^{\top}\\
 (1/2)\|\overline{x}^k-x^k\|^2\overline{\lambda}^k\\
 \overline{x}^k-x^k
\end{pmatrix}\in\Delta(z^k)\subset\widehat{\partial}\Phi(z^k).
\]
Notice that $\widehat{\partial}\Phi_{\widetilde{c}}(w^k)=\widehat{\partial}\Phi(z^k)\times\{2\widetilde{c}(x^{k+1}\!-\!x^k)\}$ for each $k\ge\widehat{k}$. It suffices to claim that there exists $\gamma>0$ such that $\|\eta^k\|\le \gamma\|x^{k+1}-x^k\|$ for all $k\ge\widehat{k}$. Indeed, for each $k\ge\widehat{k}$, from equation \eqref{subkkt1}, Lemma \ref{lemma-bound} and Proposition \ref{prop-lambark}, it follows
\[
\|\eta^k\|\le\big(\beta_{\mathcal{Q}}+\beta_L\beta_{\overline{\lambda}}+\beta_{\overline{\lambda}}+1\big)\|\overline{x}^k-x^k\|+(1/2)\beta_{\overline{\lambda}}\|\overline{x}^k-x^k\|^2.
\]
Recall that $\lim_{k\to\infty}\|\overline{x}^{k}-x^k\|=0$ by Lemma \ref{lemma-xbark}, while $\|\overline{x}^{k}-x^k\|\le\|\overline{x}^{k}-x^{k+1}\|+\|x^{k+1}-x^k\|\le(\widehat{\gamma}+1)\|x^{k+1}-x^k\|$ for each $k\ge\widehat{k}$ by Proposition \ref{prop-eboundk}.  Thus, from the above inequality, there exists $\gamma>0$ such that $\|\eta^k\|\le\gamma\|x^{k+1}-x^k\|$ for all $k\ge\widehat{k}$. 
\end{proof}

Proposition \ref{prop-relgap} provides a relatively inexact optimality condition for minimizing $\Phi_{\widetilde{c}}$ with the sequence $\{w^k\}_{k\in\mathbb{N}}$. However, we cannot apply directly the recipe developed in \cite{Attouch13,Bolte14} to get the convergence of $\{w^k\}_{k\in\mathbb{N}}$, since $\{\Phi_{\widetilde{c}}(w^k)\}_{k\in\mathbb{N}}$ lacks the sufficient decrease. From the expression of $\Phi_{\widetilde{c}}$ in \eqref{Phic-def} and Lemma \ref{lemma-potential}, for all $k\ge\widehat{k}$, 
\begin{equation*}
 \Phi_{\widetilde{c}}(w^k)-\varpi^*
  =\Phi(z^k)+\widetilde{c}\|x^{k+1}-x^k\|^2-\varpi^*\ge F(x^{k+1})-\varpi^*>0.
\end{equation*}  
Based on this, we can establish the full convergence of $\{x^k\}_{k\in\mathbb{N}}$ by combining the KL inequality on $\Phi_{\widetilde{c}}$ and the decrease of $\{F(x^k)\}_{k\in\mathbb{N}}$. 
\begin{theorem}\label{globalconv}
 Under Assumptions \ref{ass2}-\ref{ass4}, if $\Phi_{\widetilde{c}}$ has the KL property on $\mathcal{W}^*$, then $\sum_{k=0}^{\infty}\|x^{k+1}\!-x^k\|<\infty$, so the sequence $\{x^k\}_{k\in\mathbb{N}}$ converges to some $x^*\in\omega(x^0)\subset\Gamma^*$.
\end{theorem}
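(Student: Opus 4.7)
The plan is to combine the uniformized KL machinery with the $F$-descent and the relative-error bound of Proposition \ref{prop-relgap} to establish summability $\sum_{k=0}^\infty\|x^{k+1}-x^k\|<\infty$; the Cauchy property then yields a limit, which lies in $\omega(x^0)\subset\Gamma^*$ by Theorem \ref{theorem-sconverge}. Since $\mathcal{W}^*$ is nonempty and compact by Proposition \ref{prop-ZWstar}(i) and $\Phi_{\widetilde{c}}\equiv\varpi^*$ on it by Proposition \ref{prop-ZWstar}(iii), the uniformized KL property (applied as in \cite[Lemma 6]{Bolte14}) furnishes $\varepsilon>0,\eta>0$ and a single $\varphi\in\Upsilon_{\!\eta}$ such that the KL inequality holds at every point of the tube $\mathcal{T}:=\{w:{\rm dist}(w,\mathcal{W}^*)\le\varepsilon\}\cap[\varpi^*<\Phi_{\widetilde{c}}<\varpi^*+\eta]$. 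By Proposition \ref{prop-ZWstar}(iv) together with ${\rm dist}(w^k,\mathcal{W}^*)\to 0$, $w^k\in\mathcal{T}$ for all large $k$, and Proposition \ref{prop-relgap} turns the KL inequality into
\[
  \varphi'(\Phi_{\widetilde{c}}(w^k)-\varpi^*)\,\gamma\|x^{k+1}-x^k\|\ge 1.
\]

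The next step is the usual concavity/telescoping trick, adapted to the fact that the sufficient-decrease property lives on $F$ rather than on $\Phi_{\widetilde{c}}$. Since Lemma \ref{lemma-potential} gives $F(x^{k+1})-\varpi^*\le\Phi_{\widetilde{c}}(w^k)-\varpi^*$ and $\varphi'$ is non-increasing, concavity of $\varphi$ yields
\[
 \varphi(F(x^{k+1})\!-\!\varpi^*)-\varphi(F(x^{k+2})\!-\!\varpi^*)\ge\varphi'(\Phi_{\widetilde{c}}(w^k)\!-\!\varpi^*)\big(F(x^{k+1})\!-\!F(x^{k+2})\big),
\]
and then \eqref{F-decrease} together with the KL bound gives
\[
 \|x^{k+2}-x^{k+1}\|^2\le\frac{2\gamma}{\alpha}\|x^{k+1}-x^k\|\,\big[\varphi(F(x^{k+1})-\varpi^*)-\varphi(F(x^{k+2})-\varpi^*)\big].
\]
The AM-GM inequality $\sqrt{ab}\le\tfrac{1}{2}(a+b)$ converts this to $\|x^{k+2}-x^{k+1}\|\le\tfrac{1}{2}\|x^{k+1}-x^k\|+(\gamma/\alpha)[\varphi(F(x^{k+1})-\varpi^*)-\varphi(F(x^{k+2})-\varpi^*)]$. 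Summing from an index $K$ to $N$, absorbing the factor $1/2$, and letting $N\to\infty$ produces the uniform bound
\[
\textstyle{\sum_{k\ge K}}\|x^{k+1}-x^k\|\le 2\|x^{K+1}-x^K\|+(2\gamma/\alpha)\varphi(F(x^{K+1})-\varpi^*),
\]
which proves the summability and hence the Cauchy property of $\{x^k\}_{k\in\mathbb{N}}$.

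One subtlety must be discharged before the KL step: the degenerate case $\Phi_{\widetilde{c}}(w^{k_0})=\varpi^*$ for some finite $k_0$, where the KL inequality is not directly applicable since $\varphi$ is defined on $(0,\eta)$. In that case Lemma \ref{lemma-potential} forces $F(x^{k_0+1})=\varpi^*$, and since $\{F(x^k)\}$ is non-increasing with limit $\varpi^*$ (Corollary \ref{corollary-objF}) we get $F(x^k)\equiv\varpi^*$ for all $k\ge k_0+1$; then \eqref{F-decrease} immediately gives $x^{k+1}=x^k$ eventually, and the conclusion is trivial. The main obstacle I anticipate is precisely this mismatch between the natural descent object ($F$) and the KL object ($\Phi_{\widetilde{c}}$): the concavity argument must be applied at $F(x^{k+1})-\varpi^*$ while the KL information is extracted at $\Phi_{\widetilde{c}}(w^k)-\varpi^*$, and the two-sided sandwich $F(x^{k+1})\le\Phi_{\widetilde{c}}(w^k)\le F(x^k)+\widetilde{c}\|x^{k+1}-x^k\|^2$ from Lemma \ref{lemma-potential} is the bridge that makes the monotonicity of $\varphi'$ usable.
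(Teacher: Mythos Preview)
Your proposal is correct and follows essentially the same route as the paper's own proof: uniformized KL on $\mathcal{W}^*$ via \cite[Lemma 6]{Bolte14}, the relative-error bound from Proposition \ref{prop-relgap}, the bridge $F(x^{k+1})-\varpi^*\le\Phi_{\widetilde{c}}(w^k)-\varpi^*$ from Lemma \ref{lemma-potential} to transfer the KL information to the $F$-level via monotonicity of $\varphi'$, then concavity, AM--GM, and telescoping. The only cosmetic differences are an index shift (the paper writes the KL step at $w^{k-1}$ and pairs it with $\|x^k-x^{k-1}\|$) and the treatment of the degenerate case (the paper disposes of $F(x^{k_0})=F(x^{k_0+1})$ directly via \eqref{F-decrease} and Remark \ref{remark-alg}(c), while you argue through $\Phi_{\widetilde{c}}(w^{k_0})=\varpi^*$); both are equivalent.
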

\begin{proof}
 If there exists some $k_0\in\mathbb{N}$ such that $F(x^{k_0})=F(x^{k_0+1})$, then $x^{k_0+1}=x^{k_0}$ follows Corollary \ref{corollary-objF}. By Remark \ref{remark-alg} (c), Algorithm \ref{iMBA} finds a stationary point within a finite number of steps. Hence, it suffices to consider that $F(x^k)>F(x^{k+1})>\varpi^*$ for all $k\in\mathbb{N}$. From the expression of $\Phi_{\widetilde{c}}$ in \eqref{Phic-def} and Lemma \ref{lemma-potential}, for each $k\ge\widehat{k}$, it holds that
 \begin{align}\label{relationPhi}
	\Phi_{\widetilde{c}}(w^k)-\varpi^*
	=\Phi(z^k)+\widetilde{c}\|x^{k+1}-x^k\|^2-\varpi^*\ge F(x^{k+1})-\varpi^*>0.
 \end{align}  
 By Proposition \ref{prop-ZWstar}, the set $\mathcal{W}^*$ is nonempty and compact, and $\Phi_{\widetilde{c}}(w)=\varpi^*$ for all $w\in \mathcal{W}^*$. From the KL property of $\Phi_{\widetilde{c}}$ on $\mathcal{W}^*$ and \cite[Lemma 6]{Bolte14}, there exist $\varepsilon>0,\eta>0$ and $\varphi\in\Upsilon_{\!\eta}$ such that for all $w\in[\varpi^*<\Phi_{\widetilde{c}}<\varpi^*+\eta]\cap\mathfrak{B}(\mathcal{W}^*,\varepsilon)$ with $\mathfrak{B}(\mathcal{W}^*,\varepsilon)\!:=\big\{w\in\mathbb{W}\,|\,{\rm dist}(w,\mathcal{W}^*)\le\varepsilon\big\}$, $\varphi'(\Phi_{\widetilde{c}}(w)-\varpi^*){\rm dist}(0,\partial\Phi_{\widetilde{c}}(w))\ge 1$.
 Since $\lim_{k\to\infty}\Phi_{\widetilde{c}}(w^k)=\varpi^*$ by Proposition \ref{prop-ZWstar} (iv) and $\lim_{k\to\infty}{\rm dist}(w^k,\mathcal{W}^*)=0$, if necessary by increasing $\widehat{k}$, it holds that $w^k\in[\varpi^*<\Phi_{\widetilde{c}}<\varpi^*\!+\!\eta]\cap\mathfrak{B}(\mathcal{W}^*,\varepsilon)$ for all $k>\widehat{k}$, so 
 \[
	 \varphi'\big(\Phi_{\widetilde{c}}(w^k)\!-\!\varpi^*\big){\rm dist}(0,\partial\Phi_{\widetilde{c}}(w^k))\geq 1. 
 \]
 Together with Proposition \ref{prop-relgap} and $\varphi\in\Upsilon_{\eta}$, it follows that for all $k\ge\widehat{k}$, 
 \begin{equation*}
	\gamma\varphi'\big(\Phi_{\widetilde{c}}(w^{k-1})-\varpi^*\big)\|x^k-x^{k-1}\|\ge 1.
 \end{equation*}
 Since $\varphi'$ is nonincreasing on $(0,\eta)$, the above inequality along with \eqref{relationPhi} implies 
 \begin{equation}\label{equa1-later}
	\varphi'(F(x^k)-\varpi^*)\ge\varphi'(\Phi_{\widetilde{c}}(w^{k-1})-\varpi^*)
	\ge\frac{1}{\gamma\|x^k-x^{k-1}\|}\quad \forall k\ge\widehat{k}.
 \end{equation}
 Now using \eqref{equa1-later} and the sufficient decrease of $\{F(x^k)\}_{k\in\mathbb{N}}$ in \eqref{F-decrease} and following the same arguments as those in \cite[Theorem 1 (i)]{Bolte14} leads to the conclusion. For completeness, we include the details. Invoking the concavity of $\varphi$ and \eqref{F-decrease} leads to 
 \begin{align*}
	\Delta_{k,k+1}&:=\varphi(F(x^k)-\varpi^*)-\varphi(F(x^{k+1})-\varpi^*)
	\ge\varphi'(F(x^k)-\varpi^*)(F(x^{k})\!-\!F(x^{k+1}))\\
	&\stackrel{\eqref{equa1-later}}{\ge}\frac{F(x^{k})\!-\!F(x^{k+1})}{\gamma\|x^k-x^{k-1}\|}
	\ge\frac{\alpha\|x^{k+1}-x^k\|^2}{2\gamma\|x^k-x^{k-1}\|}\quad\forall k\ge\widehat{k}.
 \end{align*}
 Then, $\|x^{k+1}\!-\!x^k\|\le\sqrt{{2\gamma}\alpha^{-1}\Delta_{k,k+1}\|x^k\!-\!x^{k-1}\|}$ for all $k\ge\widehat{k}$. Along with $2\sqrt{ab}\le a+b$ for $a\ge 0,b\ge 0$, we have $2\|x^{k+1}\!-\!x^k\|\le 2\alpha^{-1}\gamma\Delta_{k,k+1}+\|x^k\!-\!x^{k-1}\|$ for all $k\ge\widehat{k}$. Summing this inequality from any $k\ge\widehat{k}$ to any $l>k$ yields that
 \begin{align*}
	2{\textstyle\sum_{i=k}^l}\|x^{i+1}-x^i\|&\le{\textstyle\sum_{i=k}^l}\|x^i-x^{i-1}\|+2\alpha^{-1}\gamma{\textstyle\sum_{i=k}^l}\Delta_{i,i+1}\\
	&\le{\textstyle\sum_{i=k}^l}\|x^{i+1}-x^i\|+\|x^k-x^{k-1}\|+2\alpha^{-1}\gamma\varphi\big(F(x^{k})-\varpi^*\big),
 \end{align*}
 where the second inequality is by the nonnegativity of $\varphi$. Thus, for any $k\ge\widehat{k}$, 
 \begin{equation}\label{temp-ratek}
	{\textstyle\sum_{i=k}^l}\|x^{i+1}-x^i\|\le\|x^k-x^{k-1}\|+2\alpha^{-1}\gamma\varphi(F(x^{k})-\varpi^*).
 \end{equation}
 Passing the limit $l\to\infty$ to this inequality leads to $\sum_{k=0}^{\infty}\|x^{k+1}\!-x^k\|<\infty$. 
\end{proof}

Recall that $f_i$ for $i\in[m]_{+}$ are strongly convex quadratic functions. When $g_i$ for $i\in[m]_{+}$ and $\phi$ are definable in the same o-minimal structure over the real field $(\mathbb{R},+,\cdot)$, from \cite[Section 4]{Attouch10}, $h_i$ for $i\in[m]_{+}$ and $\phi$ are definable in this o-minimal structure, so are $h_i^*$ for $i\in[m]_{+}$ and $\phi$. Together with   
the expressions of $\Phi$ and $\Phi_{\widetilde{c}}$, now  $\Phi_{\widetilde{c}}$ is definable in this o-minimal structure, so is a KL function. 

By \cite[Theorem 3.3]{LiPong18}, if $\Phi$ satisfies the KL property of exponent $q\in[1/2,1)$ at $\overline{z}\in\mathcal{Z}^*$, then $\Phi_{\widetilde{c}}$ has the KL property of exponent $q\in[1/2,1)$ at $(\overline{z},0)\in\mathcal{W}^*$ since the strongly convex function $\mathbb{R}^n\ni d\mapsto \widetilde{c}\|d\|^2$ is a KL function of exponent $1/2$. Then, following the proof of \cite[Theorem 2]{Attouch09}, we can get the following convergence rate result. 
\begin{theorem}\label{KL-rate}
 Under Assumptions \ref{ass2}-\ref{ass4}, if $\Phi$ has the KL property of exponent $q\in[1/2,1)$ on the set $\mathcal{Z}^*$, then the following assertions hold: 
 \begin{itemize}
 \item[(i)] for $q=\frac{1}{2}$, there are $\widehat{a}_1>0$ and $\varrho,\widehat{\varrho}\in(0,1)$ such that for all $k$ large enough, 
\[
  F(x^{k+1})-\varpi^*\le \varrho(F(x^k)-\varpi^*)\ \ {\rm and}\ \ \|x^k-x^*\|\le \widehat{a}_1\widehat{\varrho}^{k};
 \]

 \item[(ii)] for $q\in(\frac{1}{2},1)$, there exist $\widehat{a}_2>0$ and $\widehat{a}_3>0$ such that for large enough $k$, 
\[
 F(x^k)-\varpi^*\le \widehat{a}_2 k^{-\frac{1}{2q-1}}\ \ {\rm and}\ \ \|x^k-x^*\|\le \widehat{a}_3 k^{-\frac{1-q}{2q-1}}.
\]
\end{itemize}
\end{theorem}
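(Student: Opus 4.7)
The plan is to exploit the remark immediately preceding the theorem: since the strongly convex quadratic $d\mapsto\widetilde{c}\|d\|^2$ is a KL function of exponent $1/2$ and $\Phi$ satisfies the KL property with exponent $q\in[1/2,1)$ on $\mathcal{Z}^*$, \cite[Theorem 3.3]{LiPong18} transfers the same exponent $q$ to $\Phi_{\widetilde{c}}$ on $\mathcal{W}^*=\mathcal{Z}^*\times\{0\}$. With the desingularization $\varphi(s)=c s^{1-q}$, combining the KL inequality at $w^k$ with Proposition \ref{prop-relgap} yields, for all $k$ sufficiently large,
\[
 c(1-q)\bigl(\Phi_{\widetilde{c}}(w^k)-\varpi^*\bigr)^{-q}\gamma\|x^{k+1}-x^k\|\ge 1.
\]
Together with \eqref{relationPhi}, i.e. $F(x^{k+1})-\varpi^*\le\Phi_{\widetilde{c}}(w^k)-\varpi^*$, and the sufficient-decrease inequality \eqref{F-decrease}, this produces the master recursion
\[
 \bigl(F(x^{k+1})-\varpi^*\bigr)^{2q}\le C_1\bigl(F(x^k)-F(x^{k+1})\bigr)
\]
for a constant $C_1>0$ and all large $k$.

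For part (i) with $q=1/2$, the recursion becomes linear: $F(x^{k+1})-\varpi^*\le C_1(F(x^k)-F(x^{k+1}))$, which rearranges to $F(x^{k+1})-\varpi^*\le\varrho(F(x^k)-\varpi^*)$ with $\varrho:=C_1/(1+C_1)\in(0,1)$. The iterate rate then follows by combining $\|x^{k+1}-x^k\|\le\sqrt{2\alpha^{-1}(F(x^k)-\varpi^*)}$, the geometric decay of $F(x^k)-\varpi^*$, and summation of a geometric series, which together with $\|x^k-x^*\|\le\sum_{i\ge k}\|x^{i+1}-x^i\|$ (by Theorem \ref{globalconv}) gives $\|x^k-x^*\|\le\widehat{a}_1\widehat{\varrho}^k$ for $\widehat{\varrho}=\sqrt{\varrho}\in(0,1)$.

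For part (ii) with $q\in(1/2,1)$, I would follow the classical Attouch-Bolte argument. Setting $r_k:=F(x^k)-\varpi^*$, the superlinear recursion $r_{k+1}^{2q}\le C_1(r_k-r_{k+1})$ is converted, via the standard monotonicity/integration lemma applied to the differences $r_k^{1-2q}-r_{k-1}^{1-2q}$ as in \cite[Theorem 2]{Attouch09}, into the polynomial bound $r_k\le\widehat{a}_2 k^{-1/(2q-1)}$. To pass to the iterate rate, I would invoke the tail-sum estimate \eqref{temp-ratek}, that is
\[
 S_k:={\textstyle\sum_{i\ge k}}\|x^{i+1}-x^i\|\le\|x^k-x^{k-1}\|+2\alpha^{-1}\gamma c\,r_k^{1-q},
\]
combined with $\|x^k-x^{k-1}\|=S_{k-1}-S_k$ and the above decay of $r_k$; a telescoping/monotonicity argument then yields $S_k\le\widehat{a}_3 k^{-(1-q)/(2q-1)}$, whence $\|x^k-x^*\|\le S_k$ gives the claimed rate.

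The main obstacle is part (ii): unlike the $q=1/2$ case, the recursion is genuinely nonlinear and cannot decay geometrically, so one has to invoke the differentiation-of-powers trick on $r_k^{1-2q}$ and verify that the constants obtained from the KL inequality on $\Phi_{\widetilde{c}}$, the sufficient decrease on $F$, and the relative error bound in Proposition \ref{prop-relgap} can be combined uniformly in $k$. A minor additional subtlety is ensuring that the constants $\beta_{\overline{\lambda}}, \widehat{\gamma}, \widetilde{c}$ furnished by the earlier lemmas do not interfere with the exponent $q$ when propagating the KL inequality from $\Phi_{\widetilde{c}}$ through \eqref{relationPhi} to the objective sequence; once this is handled, parts (i) and (ii) both reduce to the standard rate arguments.
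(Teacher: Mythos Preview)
Your proposal is correct and follows essentially the same route as the paper: transfer the KL exponent from $\Phi$ to $\Phi_{\widetilde{c}}$, combine the KL inequality with Proposition \ref{prop-relgap} and \eqref{relationPhi} to control $(F(x^{k+1})-\varpi^*)^{q}$ by $\|x^{k+1}-x^k\|$, and then feed in \eqref{F-decrease} to obtain the master recursion $\varpi_{k+1}^{2q}\le C_1(\varpi_k-\varpi_{k+1})$; the $q=1/2$ case and the Attouch--Bolte argument for $q\in(1/2,1)$ are handled exactly as the paper does.

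The only noticeable difference is in how the iterate rate for part (ii) is extracted. You plug the already-established decay of $r_k$ into the tail-sum estimate \eqref{temp-ratek} and then solve the resulting linear recursion $2S_k\le S_{k-1}+Ck^{-(1-q)/(2q-1)}$. The paper instead uses the inequality $\|x^k-x^{k-1}\|\le\|x^k-x^{k-1}\|^{(1-q)/q}$ (valid for large $k$ since $q\ge 1/2$ and $\|x^k-x^{k-1}\|\to 0$) together with \eqref{ineq-F} to bound \emph{both} terms on the right of \eqref{temp-ratek} by $(\Delta_{k-1}-\Delta_k)^{(1-q)/q}$, producing a self-contained recursion $\Delta_k\le c_2(\Delta_{k-1}-\Delta_k)^{(1-q)/q}$ of the same structural form as the one for $\varpi_k$, to which \cite[Theorem 2]{Attouch09} applies directly. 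Both routes are standard and yield the stated rate; the paper's version is slightly cleaner because it avoids the extra step of analyzing a forced linear recursion.
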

\begin{proof}
 From the discussion before the theorem, $\Phi_{\widetilde{c}}$ has the KL property of exponent $q\in[1/2,1)$ on the set $\mathcal{W}^*$.
 According to Theorem \ref{globalconv}, the sequence $\{x^k\}_{k\in\mathbb{N}}$ converges to $x^*$. To prove items (i)-(ii), for each $k\in\mathbb{N}$, write $\Delta_k\!:=\!\sum_{j=k}^{\infty}\|x^{j+1}\!-\!x^j\|$ and $\varpi_k:=F(x^k)-\varpi^*$. From the proof of Theorem \ref{globalconv}, it suffices to consider that $F(x^k)>F(x^{k+1})>\varpi^*$ for all $k\in\mathbb{N}$. Note that \eqref{equa1-later} holds for $\mathbb{R}_{+}\ni t\mapsto\varphi(t)=ct^{1-q}\ (c>0)$, i.e., 
 \begin{equation}\label{ineq-F}
 (F(x^{k})\!-\!\varpi^*)^{q}\le(\Phi_{\widetilde{c}}(w^{k-1})-\varpi^*)^{q} 
 \le\gamma c(1-q)\|x^k-x^{k-1}\|\quad\forall k\ge\widehat{k}.
 \end{equation}
 Combining \eqref{ineq-F} with \eqref{F-decrease} shows that the following inequality holds for all $k\ge\widehat{k}$:
 \begin{equation}\label{recursion1-Vk}
 \varpi_k^{2q}\le[\gamma c(1-q)]^2\|x^{k}-x^{k-1}\|^2\le c_1(\varpi_{k-1}-\varpi_k)\ \ {\rm with}\ c_1=2\alpha^{-1}[\gamma c(1-q)]^2.
 \end{equation}
 In addition, by passing the limit $l\to\infty$ to \eqref{temp-ratek} and using $\varphi(t)=ct^{1-q}$, it follows
 \[
 \Delta_k\le \|x^k\!-\!x^{k-1}\|+2\alpha^{-1}\gamma c\big(F(x^k)-\varpi^*\big)^{1-q}\quad\forall k\ge\widehat{k}_1.
 \]
 Recall that $\lim_{k\to\infty}\|x^{k+1}-x^k\|=0$ and $\frac{1-q}{q}\le1$. If necessary by increasing $\widehat{k}_1$, we have $\|x^k\!-\!x^{k-1}\|\le \|x^k\!-\!x^{k-1}\|^{\frac{1-q}{q}}$. Then, the above inequality implies that for all $k\ge\widehat{k}_1$, 
 \begin{align}\label{recursion1-Deltak}
 \Delta_k&\le \|x^k\!-\!x^{k-1}\|^{\frac{1-q}{q}}+2\alpha^{-1}\gamma c\big(F(x^k)-\varpi^*\big)^{1-q}\nonumber\\
 &\stackrel{\eqref{ineq-F}}{\le}\|x^k-x^{k-1}\|^{\frac{1-q}{q}}+2\alpha^{-1}(\gamma c)^{\frac{1}{q}}(1\!-\!q)^{\frac{1-q}{q}}\|x^k-x^{k-1}\|^{\frac{1-q}{q}}\nonumber\\ 
 &=c_2(\Delta_{k-1}-\Delta_k)^{\frac{1-q}{q}}\ \ {\rm with}\ c_2=1\!+2\alpha^{-1}(\gamma c)^{\frac{1}{q}}(1\!-\!q)^{\frac{1-q}{q}}.
 \end{align}
 When $q=1/2$, \eqref{recursion1-Vk} implies that $\varpi_k\le\frac{c_1}{1+c_1}\varpi^{k-1}$ for all $k\ge\widehat{k}$, which is precisely the first inequality in item (i) with $\varrho=\frac{c_1}{1+c_1}$. 
 With \eqref{recursion1-Vk}-\eqref{recursion1-Deltak}, using the same arguments as those for \cite[Theorem 2] {Attouch09} yields the rest of the result. 
\end{proof}
\subsection{KL property of $\Phi$ with exponent $q\in[1/2,1)$}\label{sec4.3}
Among nonconvex and nonsmooth functions, the KL functions of exponent $q\in[1/2,1)$ are few and far between, except for those with some special structures (see, e.g., \cite{LiPong18,WuPanBi21,YuLiPong21}). Then, for a complex nonconvex and nonsmooth function, it is important to provide a reasonable condition to ensure its KL property of exponent $q\in[1/2,1)$ at a given critical point. This section focuses on this topic for the function $\Phi$. To that end, define
\begin{equation} \psi(x,t,\omega):=\phi(x)+t+\delta_{\mathbb{R}_{-}^m}(\omega)\quad{\rm for}\ (x,t,\omega)\in\mathbb{R}^n\times\mathbb{R}\times\mathbb{R}^m.
\end{equation}
From \eqref{Phi-fun},  $\Phi(z)=\psi(x,T_0(x,s,\xi),T(x,s,V,L))$ for $z\in\mathbb{Z}$. Note that $\psi$ is a KL function of exponent $q\in[1/2,1)$ if and only if $\phi$ is a KL function of exponent $q\in[1/2,1)$. Based on this fact and the composite structure of $\Phi$, we provide a checkable condition for $\Phi$ to have the KL property of exponent $q\in[1/2,1)$ at its critical points. 
\begin{proposition}\label{prop-KLexp}
 Fix any $\overline{z}=\!(\overline{x},\overline{s},\overline{V},\overline{L},\overline{\xi})\in\mathcal{Z}^*$. If $\phi$ has the KL property of exponent $q\in[1/2,1)$ at $\overline{x}$, then $\Phi$ has the KL property of exponent $q$ at $\overline{z}$ when
 \begin{equation}\label{cond-KL}
 (r,v,\lambda)\in [0,1]\times r\partial\phi(\overline{x})\times\mathcal{N}_{\mathbb{R}_{-}^m}(g(\overline{x})),\ r\overline{\xi}+v+\overline{V}\lambda=0 \ \Longrightarrow\ (r,\lambda)=0.
 \end{equation}
\end{proposition}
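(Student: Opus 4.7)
The plan is to express $\Phi$ as a smooth reparametrization of a block-separable KL function and transfer the KL exponent through the composition via \cite[Theorem 3.2]{LiPong18}. Define $F\colon\mathbb{Z}\to\mathbb{R}^n\times\mathbb{R}\times\mathbb{R}^m$ by $F(z):=(x,T_0(x,s,\xi),T(x,s,V,L))$ and $\psi(x,t,\omega):=\phi(x)+t+\delta_{\mathbb{R}_-^m}(\omega)$, so that $\Phi=\psi\circ F$. By Lemma \ref{lemma-major}(ii), every $h_i$ is strongly convex on $\mathbb{R}^n$, so each $h_i^{\ast}$ is $\mathcal{C}^{1,1}$; hence $T_0$ and $T$, and with them $F$, are $\mathcal{C}^1$ in a neighborhood of $\overline{z}$.

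The first step is to show that $\psi$ has KL exponent $q$ at $F(\overline{z})=(\overline{x},g_0(\overline{x}),g(\overline{x}))$. Since $\psi$ is block-separable in $(x,t,\omega)$ and each factor admits a KL exponent at most $q$ (namely $q$ for $\phi$ by hypothesis, any exponent for the linear term in $t$, and $1/2$ for the polyhedral indicator $\delta_{\mathbb{R}_-^m}$), a separable-sum calculus rule for KL exponents delivers the exponent $\max\{q,1/2\}=q$ for $\psi$.

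Next I would invoke the smooth-composition rule \cite[Theorem 3.2]{LiPong18} for $\Phi=\psi\circ F$, whose qualification demands that $\nabla F(\overline{z})^{\ast}\zeta=0$ with $\zeta\in\partial\psi(F(\overline{z}))\cup\partial^{\infty}\psi(F(\overline{z}))$ forces $\zeta=0$. Since $\phi$ is finite convex, hence locally Lipschitz, $\partial^{\infty}\phi(\overline{x})=\{0\}$; the block structure of $\psi$ then gives
\[
\partial\psi(F(\overline{z}))=\partial\phi(\overline{x})\times\{1\}\times\mathcal{N}_{\mathbb{R}_-^m}(g(\overline{x})),\ \ \partial^{\infty}\psi(F(\overline{z}))=\{0\}\times\{0\}\times\mathcal{N}_{\mathbb{R}_-^m}(g(\overline{x})).
\]
Parametrizing both sets uniformly by triples $(v,r,\lambda)$ with $r\in\{0,1\}$ and $v\in r\partial\phi(\overline{x})$, and then using positive homogeneity of the qualification in $\zeta$ to rescale, yields the interval form $r\in[0,1]$ appearing in \eqref{cond-KL}. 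Combining $\overline{s}=\overline{x}$ from Proposition \ref{prop-ZWstar}(ii) with the Jacobian formulas already established in the proof of Proposition \ref{prop-relgap}, one computes $\nabla_{x}T_0(\overline{z})=\overline{\xi}$ and $\nabla_{x}T(\overline{z})\lambda=\overline{V}\lambda$, while $\nabla_{s}T_0$, $\nabla_{\xi}T_0$, $\nabla_{s}T\lambda$, $\nabla_{V}T\lambda$ and $\nabla_{L}T\lambda$ all vanish at $\overline{z}$. Hence $\nabla F(\overline{z})^{\ast}(v,r,\lambda)=(v+r\overline{\xi}+\overline{V}\lambda,0,0,0,0)$, and the qualification reduces exactly to \eqref{cond-KL}.

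The main obstacle is the nonsmoothness of $\delta_{\mathbb{R}_-^m}$: because it is not locally Lipschitz, one cannot apply the simplest chain-rule calculus and must track both the limiting and horizon subdifferentials across the composition, which is precisely the source of the two branches $r=1$ and $r=0$ in \eqref{cond-KL}. Polyhedrality of $\mathbb{R}_-^m$ keeps its horizon behavior benign and contributes exponent $1/2$ to the separable-sum bound rather than something larger, while finiteness of $\phi$ ensures $\partial^{\infty}\phi(\overline{x})=\{0\}$ so no additional term appears on the $\phi$-side of the $r=0$ branch. Once the qualification is in place, \cite[Theorem 3.2]{LiPong18} transfers the KL inequality for $\psi$ at $F(\overline{z})$ to $\Phi$ at $\overline{z}$ with the same exponent $q$.
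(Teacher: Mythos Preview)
Your approach has a genuine gap: the qualification condition in \cite[Theorem 3.2]{LiPong18} is \emph{surjectivity of the Jacobian} $\mathcal{J}F(\overline{z})$, not the subdifferential-type condition ``$\nabla F(\overline{z})^{\ast}\zeta=0$ with $\zeta\in\partial\psi(F(\overline{z}))\cup\partial^{\infty}\psi(F(\overline{z}))\Rightarrow\zeta=0$'' that you describe. Surjectivity of $\mathcal{J}F(\overline{z})$ is equivalent to $\mathrm{Ker}\,[I\ \ \overline{\xi}\ \ \overline{V}]=\{0\}$, and since this matrix is $n\times(n+1+m)$, its kernel is always nontrivial; equivalently, your own Jacobian computation shows that every partial of $F$ in the $s,V,L,\xi$ directions vanishes at $\overline{z}\in\mathcal{Z}^*$, so $\mathcal{J}F(\overline{z})$ has rank at most $n<n+1+m$. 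Hence \cite[Theorem 3.2]{LiPong18} is \emph{never} applicable at any $\overline{z}\in\mathcal{Z}^*$, and your route cannot establish the proposition under \eqref{cond-KL}. The paper makes exactly this point in Remark~(a) following the proposition.

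The paper's proof is intrinsically different and does not reduce to a black-box calculus rule. It argues by contradiction: assume $\Phi$ fails the KL property of exponent $q$ at $\overline{z}$, extract a sequence $z^l\to\overline{z}$ with $\mathrm{dist}(0,\partial\Phi(z^l))<l^{-1}(\Phi(z^l)-\Phi(\overline{z}))^{q}$. Condition \eqref{cond-KL} implies metric regularity of $\mathcal{T}(\cdot):=T(\cdot)-\mathbb{R}_{-}^m$ at $((\overline{x},\overline{s},\overline{V},\overline{L}),0)$, which (by robustness) gives the exact formula $\partial\Phi(z^l)=\Delta(z^l)$ for large $l$; this produces multipliers $(v^l,\lambda^l)$. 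The KL property of $\psi$ (inherited from that of $\phi$) yields $(\psi(x^l,t^l,\omega^l)-\varpi^*)^q\le c^{-1}\|(v^l,1,\lambda^l)\|$. After showing $\{\lambda^l\}$ is bounded (again via the metric regularity of $\mathcal{T}$) and normalizing $(v^l,1,\lambda^l)$ by its norm, one passes to a limit $(\widetilde{v},\widetilde{r},\widetilde{\lambda})$ with $\widetilde{r}>0$, $\widetilde{v}\in\widetilde{r}\,\partial\phi(\overline{x})$, $\widetilde{\lambda}\in\mathcal{N}_{\mathbb{R}_-^m}(g(\overline{x}))$ and $\widetilde{r}\,\overline{\xi}+\widetilde{v}+\overline{V}\widetilde{\lambda}=0$, contradicting \eqref{cond-KL}. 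The normalization step is precisely what produces the interval $r\in[0,1]$ and the weakened conclusion $(r,\lambda)=0$ rather than full injectivity of the adjoint; this is the idea your argument is missing.
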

\begin{proof}
 From the expression of $\Delta(z)$ in \eqref{Deltaz}, it is not difficult to check that $0\in\Delta(\overline{z})\subset\partial\Phi(\overline{z})$, so $\overline{z}$ is a critical point of $\Phi$. Define the following multifunction 
 \begin{equation}
 \mathcal{T}(x,s,V,L):=T(x,s,V,L)-\mathbb{R}_{-}^m\ \ {\rm for}\  (x,s,V,L)\in\mathbb{R}^n\times\mathbb{R}^n\times\mathbb{R}^{n\times m}\times\mathbb{R}_{+}^m. 
 \end{equation}
 Then, the mapping $\mathcal{T}$ is metrically regular at $((\overline{x},\overline{s},\overline{V},\overline{L}),0_{\mathbb{R}^m})$ if and only if ${\rm Ker}\,\overline{V}\cap\mathcal{N}_{\mathbb{R}_{-}^m}(g(\overline{x}))=\{0_{\mathbb{R}^m}\}$, and the latter is implied by the condition \eqref{cond-KL}. From the robustness of metric regularity, there exists $\varepsilon>0$ such that $\mathcal{T}$ is metrically regular at any $((x,s,V,L),0_{\mathbb{R}^m})$ with $(x,s,V,L)\in\mathbb{B}((\overline{x},\overline{s},\overline{V},\overline{L}),\varepsilon)\cap T^{-1}(\mathbb{R}_{-}^m)$. 
 
 Suppose on the contrary that $\Phi$ does not satisfy the KL property of exponent $q$ at $\overline{z}$. According to  Definition \ref{KL-def}, there exists a sequence $z^l:=(x^l,s^l,V^l,L^l,\xi^l)\to\overline{z}$ with $\Phi(\overline{z})<\Phi(z^l)<\Phi(\overline{z})+\frac{1}{l}$ such that for each $l\in\mathbb{N}$, 
 \[
 {\rm dist}(0,\partial\Phi(z^l))<\frac{1}{l}(\Phi(z^l)-\Phi(\overline{z}))^{q}=\frac{1}{l}\big(\psi(x^l,t^l,\omega^l)-\psi(\overline{x},\overline{t},\overline{\omega})\big)^{q}, 
 \]
 where $t^l=T_0(x^l,s^l,\xi^l)$ and $\omega^l=T(x^l,s^l,V^l,L^l)$, and $\overline{t}=T_0(\overline{x},\overline{s},\overline{\xi})=g_0(\overline{x})$ and $\overline{\omega}=T(\overline{x},\overline{s},\overline{V},\overline{L})=g(\overline{x})$. Obviously, $\Phi(\overline{z})=\psi(\overline{x},\overline{t},\overline{\omega})=F(\overline{x})=\varpi^*$. Moreover, for each $l\in\mathbb{N}$, $z^l\in{\rm dom}\,\Phi$, so that $(x^l,s^l,V^l,L^l)\in T^{-1}(\mathbb{R}_{-}^m)$ and $\Phi(z^{l})=\psi(x^{l},t^{l},\omega^{l})$. Along with $z^l\to\overline{z}$, there exists $\widetilde{l}\in\mathbb{N}$ such that $\mathcal{T}$ is metrically regular at each $((x^l,s^l,V^l,L^l),0_{\mathbb{R}^m})$ for $l\ge\widetilde{l}$. Note that the inclusions in \eqref{Deltaz} become equalities at $z^l$ if the mapping $\mathcal{T}$ is subregular at $((x^l,s^l,V^l,L^l),0_{\mathbb{R}^m})$ since $\partial\Phi(z^l)\subset\Delta(z^l)$ by \cite[Page 211]{Ioffe08-calm}. Then, for each $l\ge\widetilde{l}$, $\partial\Phi(z^l)=\Delta(z^l)$, so there exist $v^l\in\partial\phi(x^l),y^{0,l}:=\nabla h_0^*(-\xi^l\!+\!\nabla\!f_0(s^l)),\lambda^l\in\mathcal{N}_{\mathbb{R}_{-}^m}(\omega^l)$ and $y^{i,l}:=\nabla h_i^*(-V_{i}^l+\nabla\!f_i(s^l))$ such that
\begin{align}\label{kl1}
 &\left\|\begin{pmatrix}
 \xi^l+v^l+V^l\lambda^l+\langle L^l,\lambda^l\rangle(x^l-s^l)\\
 \nabla^2\!f_0(s^l)(y^{0,l}\!-\!s^l)+\sum_{i=1}^m\lambda_i^l\nabla^2\!f_i(s^l)(s^l\!-\!y^{i,l})+\langle L^l,\lambda^l\rangle(s^l\!-\!x^l)\\
 [\lambda_1^l(x^l\!-\!y^{1,l})\ \cdots\ \lambda_m^l(x^l\!-\!y^{m,l})]\\
 \frac{1}{2}\|x^l-s^l\|^2\lambda^l\\
  x^l-y^{0,l}
 \end{pmatrix}\right\|\nonumber\\
 &<\frac{1}{l}\big(\psi(x^l,t^l,\omega^l)-\varpi^*\big)^{q}.
\end{align}
 Since $\psi$ has the KL property of exponent $q$ at $(\overline{x},\overline{t},\overline{\omega})$, there exist $\delta>0,\eta>0$ and $c>0$ such that for all $(x,t,\omega)\in\mathbb{B}((\overline{x},\overline{t},\overline{\omega}),\delta)\cap[\varpi^*<\psi<\varpi^*+\eta]$,
 \begin{equation*}
 {\rm dist}(0,\partial\psi(x,t,\omega))\ge c(\psi(x,t,\omega)-\varpi^*)^{q}.
 \end{equation*}
 From $z^l\to\overline{z}$ and the expression of $T$ in \eqref{Tmap}, we have $\omega^l\to\overline{\omega}$. If necessary by increasing $\widetilde{l}$, for all $l\ge\widetilde{l}$, $(x^l,t^l,\omega^l)\in\mathbb{B}((\overline{x},\overline{t},\overline{\omega}),\delta)\cap[\varpi^*\!<\psi<\varpi^*\!+\!\eta]$. Then  
 \begin{equation}\label{kl3}
  c(\psi(x^l,t^l,\omega^l)-\varpi^*)^q\le{\rm dist}(0,\partial\psi(x^l,t^l,\omega^l))
  \le\|(v^l,1,\lambda^l)\|:=\varsigma_l,
 \end{equation}
 where the second inequality is due to $(v^l,1,\lambda^l)\in\partial \psi(x^l,t^l,\omega^l)$ for each $l\in\mathbb{N}$. Note that the sequence $\{v^l\}_{l\in\mathbb{N}}$ is bounded. We claim that the sequence $\{\lambda^l\}_{l\in\mathbb{N}}$ is bounded. If not, we  assume that $\lim_{l\to\infty}\frac{\lambda^l}{\|\lambda^l\|}=\widetilde{\lambda}$ with $\|\widetilde{\lambda}\|=1$ (if necessary by taking a subsequence). Obviously, $\frac{\lambda^l}{\|\lambda^l\|}\in\mathcal{N}_{\mathbb{R}_{-}^m}(\omega^l)$ for each $l\in\mathbb{N}$, which by the outer semicontinuity of $\mathcal{N}_{\mathbb{R}_{-}^m}(\cdot)$ implies $\widetilde{\lambda}\in\mathcal{N}_{\mathbb{R}_{-}^m}(g(\overline{x}))$. On the other hand, dividing the both sides of \eqref{kl1} by $\|\lambda^l\|$ and passing the limit $l\to\infty$ results in $\overline{V}\widetilde{\lambda}=0$. Then, the metric regularity of $\mathcal{T}$ at $((\overline{x},\overline{s},\overline{V},\overline{L}),0_{\mathbb{R}^m})$ implies that $\widetilde{\lambda}=0$, a contradiction to $\|\widetilde{\lambda}\|=1$. Thus, $\{\lambda^l\}_{l\in\mathbb{N}}$ is bounded.  Along with the definition of $\varsigma_l$, we deduce $1\le\varsigma_l\le \overline{\varsigma}$ for some $\overline{\varsigma}\ge 1$. 
 For each $l\ge\widetilde{l}$, write $(\widetilde{v}^l,\widetilde{\lambda}^l):=\varsigma_l^{-1}(v^l,\lambda^l)$. Obviously, $\|(\widetilde{v}^l,\varsigma_l^{-1},\widetilde{\lambda}^l)\|=1$. There exists an index set $\mathcal{K}\subset\mathbb{N}$ such that $\lim_{\mathcal{K}\ni l\to\infty}(\widetilde{v}^l,\varsigma_l^{-1},\widetilde{\lambda}^l)=(\widetilde{v},\widetilde{r},\widetilde{\lambda})$ with $\overline{\varsigma}^{-1}\le \widetilde{r}\le 1$. Together with the above \eqref{kl1} and \eqref{kl3}, for each $l\ge\widetilde{l}$, it holds  
\begin{equation*}
 \left\|\begin{pmatrix} \varsigma_l^{-1}\xi^l+\widetilde{v}^l+V^l\widetilde{\lambda}^l+\langle L^l,\widetilde{\lambda}^l\rangle(x^l-s^l)\\
 \varsigma_l^{-1}\nabla^2\!f_0(s^l)(y^{0,l}\!-\!s^l)+\sum_{i=1}^m\widetilde{\lambda}_i^l\nabla^2\!f_i(s^l)(s^l\!-\!y^{i,l})+\langle L^l,\widetilde{\lambda}^l\rangle(s^l\!-\!x^l)\\
 [\widetilde{\lambda}_1^l(x^l\!-\!y^{1,l})\ \cdots\ \widetilde{\lambda}_m^l(x^l\!-\!y^{m,l})]\\
 \frac{1}{2}\|x^l-s^l\|^2\widetilde{\lambda}^l\\
 \varsigma_l^{-1}(x^l-y^{0,l})
\end{pmatrix}\right\|\le\frac{1}{cl}.
\end{equation*}
Recall that $z^l\to\overline{z}\in \mathcal{Z}^*$. From the definition of $y^{i,l}$ for $i\in[m]_{+}$ and Proposition \ref{prop-ZWstar} (ii), it follows  $\lim_{l\to\infty}y^{0,l}=\overline{x}$ and $\lim_{l\to\infty}y^{i,l}=\overline{x}$ for all $i\in[m]$. Now passing the limit $\mathcal{K}\ni l\to\infty$ to the above inequality results in $\widetilde{v}+\widetilde{r}\overline{\xi}+\overline{V}\widetilde{\lambda}=0$. On the other hand, recall that $v^l\in\partial\phi(x^l)$ and $\lambda^l\in\mathcal{N}_{\mathbb{R}_-^m}(\omega^l)$, for each $l\ge\widetilde{l}$, it holds $(\widetilde{v}^l,\widetilde{\lambda}^l)\in\varsigma_l^{-1}\partial\phi(x^l)\times\mathcal{N}_{\mathbb{R}_-^m}(\omega^l)$.
Passing the limit $\mathcal{K}\ni l\to\infty$ to the inclusion and using the osc property of the mapping $\mathcal{N}_{\mathbb{R}_-^m}(\cdot)$ results in
\(
(\widetilde{v},\widetilde{\lambda})\in \widetilde{r}\,\partial\phi(\overline{x})\times\mathcal{N}_{\mathbb{R}_-^m}(g(\overline{x})).
\)
The above two equations yield a contradiction to \eqref{cond-KL} since $\widetilde{r}\ge\overline{\varsigma}^{-1}>0$.
\end{proof}
\begin{remark}
 {\bf(a)} Proposition \ref{prop-KLexp} provides a condition to check the KL property of $\Phi$ with exponent $q\in[1/2,1)$ at the given critical point $\overline{z}$. Note that $\Phi=\psi\circ\Xi$ with $\Xi(z):=(x,T_0(x,s,\xi),T(x,s,V,L))$ for $z\in\mathbb{Z}$. Since the mapping $\Xi:\mathbb{Z}\to\mathbb{R}^{n+1+m}$ is smooth, the criterion developed in \cite[Theorem 3.2]{LiPong18} can be used to check the KL property of $\Phi$ with exponent $q\in[1/2,1)$ at $\overline{z}\in\mathcal{Z}^*$ in terms of that of $\psi$ and the surjectivity of $\mathcal{J}\Xi(\overline{z})$. The surjectivity of $\mathcal{J}\Xi(\overline{z})$ is equivalent to requiring that ${\rm Ker}\,[I\ \ \overline{\xi}\ \ \overline{V}]=\{(0_{\mathbb{R}^n},0,0_{\mathbb{R}^m})\}$, which is obviously stronger than the condition \eqref{cond-KL}. Thus, the criterion in Proposition \ref{prop-KLexp} is weaker than that of \cite[Theorem 3.2]{LiPong18}. 

 \noindent
 {\bf(b)} From the proof of Proposition \ref{prop-KLexp}, the condition \eqref{cond-KL} implies the metric regularity of $\mathcal{T}$ at $((\overline{x},\overline{s},\overline{V},\overline{L}),0_{\mathbb{R}^m})$. When the latter holds, it is highly possible for the condition \eqref{cond-KL} to hold if $\overline{\xi}$ has no zero components but $v+\overline{V}\lambda$ has a zero entry. 
\end{remark}
\section{Numerical experiments}\label{sec5}
\subsection{Implementation of Algorithm \ref{iMBA}}\label{sec5.1}
The core of Algorithm \ref{iMBA} at each iteration is to seek an inexact minimizer of subproblem \eqref{subprobkj} satisfying \eqref{inexact1}-\eqref{inexact2}. Here, we take a look at the solving of \eqref{subprobkj}. Fix any $k,j\in\mathbb{N}$. For the PD linear mapping $\mathcal{Q}_{k,j}\!:\mathbb{R}^n\to\mathbb{R}^n$ in step (2a), we choose it to be of the form $\mathcal{Q}_{k,j}=\mu_{k,j}\mathcal{I}+\!\mathcal{A}_{k,j}^*\mathcal{A}_{k,j}$, where $\mathcal{A}_{k,j}\!:\mathbb{R}^n\to\mathbb{R}^{p}$ is a linear mapping associated with $g_0$. Such a $\mathcal{Q}_{k,j}$ makes the dual of \eqref{subprobkj} have a simple composite form. Indeed, now \eqref{subprobkj} can equivalently be written as 
\begin{align}\label{EEsubprob}
	&\min_{x,s\in\mathbb{R}^n,y\in\mathbb{R}^p}\frac{1}{2}\|y\|^2+\frac{1}{2}\mu_{k,j}\|x-x^k\|^2+\langle \xi^k,x-x^k\rangle+\phi(s)+g_0(x^k)\nonumber\\
	&\qquad {\rm s.t.}\quad G(x,x^k,V^k,L^{k,j})\le 0,\ x-s=0,\ \mathcal{A}_{k,j}(x-x^k)=y.
\end{align}
The dual of \eqref{EEsubprob} (in its equivalent minimization form) has the following composite form
\begin{equation}\label{dual}	
 \!\min_{w\in\mathbb{W}}\Xi_{k,j}(w)\!:=\!\underbrace{\frac{\|\mathcal{B}_{k,j}(w)+\xi^k\|^2}{2(\mu_{k,j}\!+\!\langle\lambda,L^{k,j}\rangle)}\!-\!\langle\eta,x^k\rangle\!-\!\langle\lambda,g(x^k)\rangle\!+\!\frac{1}{2}\|\zeta\|^2\!-\!g_0(x^k)}_{\Theta_{k,j}(w)}\!+\delta_{\mathbb{R}_{+}^m}(\lambda)\!+\!\phi^*(\eta)
\end{equation}
for $w=(\lambda,\eta,\zeta)\in\mathbb{W}:=\mathbb{R}^{m}\times\mathbb{R}^{n}\times\mathbb{R}^{p}$, where $\Theta_{k,j}$ is a twice continuously differentiable convex function on the space $\mathbb{W}$, and $\mathcal{B}_{k,j}\!:\mathbb{W}\to\mathbb{R}^n$ is a linear mapping defined by
\[
\mathcal{B}_{k,j}(w):=V^k\lambda+\eta+\mathcal{A}_{k,j}^*\zeta\quad\forall w\in\mathbb{W}.
\]

The strong convexity of \eqref{subprobkj} guarantees that there is no dual gap between \eqref{subprobkj} and \eqref{dual}, and \eqref{dual} has a nonempty optimal solution set. Therefore, one can seek an inexact minimizer of \eqref{subprobkj} satisfying \eqref{inexact1}-\eqref{inexact2} by solving its dual. Considering that the gradient of $\Theta_{k,j}$ is not Lipschitz continuous, we suggest the PG method with line search (PGls) as the solver to \eqref{dual}. The iteration steps of PGls are described in Algorithm \ref{PGls} below, and its convergence certificate can be found in \cite{Kanzow22}. In the sequel, we call Algorithm \ref{iMBA} armed with Algorithm \ref{PGls} iMBA-pgls. 

\begin{algorithm}
	\renewcommand{\thealgorithm}{A}
	\caption{\label{PGls}{\bf (PGls for solving problem \eqref{dual})}}
	\textbf{Input:}\ $0<\tau_{\rm min}\le\tau_{\rm max},\,\varrho>1,\,\delta\in(0,1),w^0=(\lambda^0,\eta^0,\zeta^0)\in\mathbb{R}_{+}^m\times\mathbb{R}^n\times{\rm dom}\,\phi^*$.
	
	\noindent
	\textbf{For} $l=0,1,2,\ldots$  \textbf{do}
	\begin{enumerate}
		\item Choose $\tau_{l,0}\in[\tau_{\rm min},\tau_{\rm max}]$.
		
		\item \textbf{For} $\nu=0,1,2,\ldots$ \textbf{do}
		\begin{itemize}
			\item [(2a)] Let $\tau_{l,\nu}=\tau_{l,0}\varrho^{\nu}$. Compute an optimal solution $w^{l,\nu}$ of the problem
			\begin{equation*}
				\qquad\min_{w\in\mathbb{W}}\Theta_{k,j}(w^{l})+\langle\nabla\Theta_{k,j}(w^{l}),w-w^{l}\rangle+\frac{\tau_{l,\nu}}{2}\|w-w^{l}\|^2+\delta_{\mathbb{R}_{+}^m}(\lambda)\!+\phi^*(\eta).
			\end{equation*}
			
			\item [(2b)] If $\Xi_{k,j}(w^{l,\nu})\le \Xi_{k,j}(w^{l})-(\delta\tau_{l,\nu}/2)\|w^{l,\nu}-w^l\|^2$, go to step 3.
		\end{itemize}
		\textbf{End}
		\item Set $\nu_l:=\nu$ and $w^{l+1}:=w^{l,\nu_l}$. 
	\end{enumerate}
    \textbf{End}
\end{algorithm}

\subsubsection{Stop conditions of iMBA-pgls}\label{sec5.1.1}

According to Remark \ref{remark-alg} (c), we terminate Algorithm \ref{iMBA} at the $k$th iteration whenever $\|x^k\!-\!x^{k-1}\|\le\epsilon$ or $k>10^4$. Since Algorithm \ref{iMBA} produces at each iteration a feasible point $x^k\in\Gamma$ and a nonnegative $\lambda^k$, we also terminate its iteration once $[-\langle\lambda^k,g(x^k)\rangle]_{+}\le\epsilon_1$ and $k\ge 500$, where the tolerances $\epsilon>0$ and $\epsilon_1>0$ are specified in the experiments. 

Next we discuss how to terminate Algorithm \ref{PGls}, i.e., how to seek an inexact minimizer of \eqref{subprobkj} satisfying \eqref{inexact1}-\eqref{inexact2} by solving its dual. For each $w\in\mathbb{W}$, let $\mathcal{L}(x,s,y,w)$ be the Lagrange function of \eqref{EEsubprob} associated with $w$. Note that the minimization problem $\min_{x,s\in\mathbb{R}^n,y\in\mathbb{R}^p}\mathcal{L}(x,s,y,w)$ is strongly convex w.r.t. $x$ and $y$, and $(x^*,s^*,y^*)$ is its optimal solution if and only if 
$x^*\!=x^k-\frac{\xi^k+\mathcal{A}_{k,j}^*\zeta+\eta+V^k\lambda}{\mu_{k,j}+\langle\lambda,L^{k,j}\rangle}, \eta\in\partial\phi(s^*)$ and $y^*=\zeta$. It is not hard to verify that if $\eta^{+}={\rm prox}_{\tau^{-1}\phi^*}(\eta-\tau^{-1}\nabla_{\!\eta}\Theta_{k,j}(w))$ for a step size $\tau>0$, then $\eta^+\in\partial\phi(\widetilde{s})$ with $\widetilde{s}=\tau(\eta-\eta^{+})-\nabla_{\!\eta}\Theta_{k,j}(w)=\tau(\eta-\eta^{+})+x^*$. Apparently, when $\eta^{+}$ is sufficiently close to $\eta$, such $\widetilde{s}$ is actually a small perturbation of $x^*$, and when $w$ is an optimal solution of \eqref{dual}, the associated $(x^*,\widetilde{s}^*,y^*)$ is optimal to \eqref{EEsubprob}. When applying the PGls to solve \eqref{dual}, the generated iterate sequence $\{\eta^l\}_{l\in\mathbb{N}}$ does satisfy such a relation, i.e., $\eta^{l+1}\!={\rm prox}_{\tau_l^{-1}\phi^*}(\eta^l\!-\!\tau_l^{-1}\nabla_{\!\eta}\Theta_{k,j}(w^l))$. By this, we set $x_{k,j}^{l}:=x^k-\frac{\xi^k+\mathcal{A}_{k,j}^*\zeta^l+\eta^l+V^k\lambda^l}{\mu_{k,j}+\langle\lambda^l,L^{k,j}\rangle}$ at  the current iterate $w^l=(\lambda^l,\eta^l,\zeta^l)$,  and check if $(x_{k,j}^{l},\eta^l,\lambda^l)$ satisfies \eqref{inexact1}-\eqref{inexact2}. When 
\[
 F_{k,j}(x_{k,j}^{l})\le F_{k,j}(x^k), C_{k,j}(x_{k,j}^{l},\lambda^{l})\le\frac{\beta_C}{2}\|x_{k,j}^{l}-x^k\|^2,S_{k,j}(x_{k,j}^{l},\eta^{l},\lambda^{l})\le \beta_S\|x_{k,j}^l-x^k\|
 \]
hold, we terminate Algorithm \ref{PGls} at the iterate $w^{l}$, and $x_{k,j}^{l}$ serves as the inexact minimizer $y^{k,j}$ of \eqref{subprobkj}, which along with $(\eta^{l},\lambda^{l})$ satisfies conditions \eqref{inexact1}-\eqref{inexact2}. 
\subsubsection{Setup of parameters}\label{sec5.1.2}

As mentioned in Section \ref{sec5.1}, we take $\mathcal{Q}_{k,j}=\mu_{k,j}\mathcal{I}+\mathcal{A}_{k,j}^*\mathcal{A}_{k,j}$. The choice of the linear operator $\mathcal{A}_{k,j}$ depends on the second-order information of $f_0$, the smooth part of $g_0$; see the subsequent experiments. Since $\mathcal{A}_{k,j}^*\mathcal{A}_{k,j}$ is positive semidefinite, the choice of $\mu_{k,j}$ has a great influence on the approximation effect of $F_{k,j}$ to $F$. Similarly, the choice of $L^{k,j}$ determines the approximation effect of $\Gamma_{k,j}$ to $\Gamma$. Our preliminary tests show that the updating rules of $L^{k,j}$ and $\mu_{k,j}$ in steps (2c)-(2d) work well with $\tau=2$, as long as the initial $\mu_{0,0}$ and $L^{0,0}$ are appropriately chosen. Apparently, the larger $\mu_{0,0}$ and $L^{0,0}$ make the first subproblem (so the subsequent ones) have a big gap from the original problem \eqref{prob} though its computation is relatively easy, and now iMBA-pgls will return a bad stationary point. In view of this, we choose $\mu_{0,0}=\,{\rm lip}\,\nabla\!f_0(x^0)$ and $L^{0,0}=0.05\,\ell_{\nabla\!f}(x^0)$, where $f=(f_1,\ldots,f_m)^{\top}$ is the smooth part of the constraint function $g$, and ${\rm lip}\,\nabla\!f_0(x^0)$ and $\ell_{\nabla\!f}(x^0)$ can be estimated by the Barzilai-Borwein rule \cite{Barzilai88}. We also find that Algorithm \ref{iMBA} is robust to $\alpha\in[10^{-12},10^{-4}]$, so choose $\alpha=10^{-6}$ for the tests. To sum up, for the subsequent tests, the parameters of Algorithm \ref{iMBA} are chosen as follows: 
\begin{align*} &\mu_{\min}=10^{-16},\,\mu_{\max}=10^{16},\,L_{\min}=10^{-16},\,L_{\max}=10^{16},\\
&\beta_{C}\!=\!10^{10},\,\beta_{S}\!=\!10^6,\,\alpha=10^{-6},\,\tau=2.
\end{align*}

Next we look at the choice of the parameters in Algorithm \ref{PGls}. Fix any $k,j\in\mathbb{N}$. Our preliminary tests indicate that Algorithm \ref{PGls} is very robust to the parameter $\delta$ whenever $\delta\le 10^{-2}$. However, the initial $\tau_{0,0}$ or the step-size $\tau_{l,\nu}^{-1}$ has a great impact on the efficiency of Algorithm \ref{PGls}. Apparently, a smaller initial $\tau_{0,0}$ will lead to a larger step-size. Based on this, we choose $\tau_{0,0}=10^{-8}\|V^k\|^2$. The following parameters
\[  \delta=10^{-6},\,\varrho=10,\,\tau_{0,0}=10^{-8}\|V^k\|^2\ \ {\rm and}\ \ l_{\rm max}=2000
\]
are used for the subsequent tests, where $l_{\rm max}$ is the maximum number of iterations. 
\subsection{Test problems}\label{sec5.2}
We shall test the performance of iMBA-pgls for solving quadratic DC constrained (QDCC) problems. The functions $g_0$ and $\phi$ in the objective of QDCC problems are \begin{equation}\label{obj}
 g_0(x):=f_0(x)-h_0(x)\ {\rm with}\ h_0(x):=0.01\|x\|\ \ {\rm and}\ \ \phi(x)=0.01\|x\|_1.
\end{equation}
We consider two types of $f_0$ for the subsequent tests. One type is the quadratic function 
\begin{equation}\label{f0-type1}
 f_0(x):=\|Y_0x\|^2+2\omega_{0}(b_0/\|b_0\|)^{\top}x
\end{equation}
where, $\omega_0$ is a constant, $Y_0\in\mathbb{R}^{p\times n}$ with $p=\lfloor n/2\rfloor$ is a matrix, and $b_0\in\mathbb{R}^n$ is a vector. The other one is the nonconvex loss function introduced in \cite{Aravkin12} to deal with the data contaminated by heavy-tailed Student-$t$ noise, which have the following form
\begin{equation}\label{f0-type2}
 f_0(x):=\theta(Ax-b)\quad{\rm with}\  \ \theta(u):=\textstyle{\sum_{i=1}^N}\log[1\!+\!4u_i^2]. 
\end{equation}
The data $A\in\mathbb{R}^{N\times n}$ and $b\in\mathbb{R}^{N}$ are randomly generated in the same way as in \cite{Milzarek14}.

The constraint functions $g_i$ for $i\in[m]$ of QDCC problems take the DC functions 
\begin{equation}\label{mapg}
 g_i(x):=x^{\top}Q_ix-x^{\top}P_ix+2\langle b_i,x\rangle +c_i\ \ {\rm with}\ P_i=10^5I,
\end{equation}
where $Q_1,\ldots,Q_m$ are the $n\times n$ PD matrices, $b_1,\ldots,b_m\in\mathbb{R}^n$, and $c_1,\ldots,c_m\in\mathbb{R}$. We follow the same way as in \cite{Auslender10} to generate the $m$ PD matrices $Q_i$. That is, every $Q_i$ is of the form $Q_i=Y_iD_iY_i$, where $D_i$ is an $n\times n$ diagonal matrix with diagonal elements shuffled randomly from the set $\{10^{\frac{10(j-1)}{n-1}}\,|\,j\in[n]\}$, and $Y_i=I-2\frac{y_iy_i^{\top}}{\|y_i\|^2}$ is an $n\times n$ random Householder orthogonal matrix with $I$ being the $n\times n$ identity matrix, and the components of $y_i\in\mathbb{R}^n$ chosen randomly from $(-1,1)$. Clearly, the spectral norm $\|Q_i\|$ of every $Q_i$ equals $10^{10}$. To generate the data $b_i$ and $c_i$ for $i\in[m]$ such that the feasible set $\Gamma=g^{-1}(\mathbb{R}_{-}^m)$ is nonempty, we consider an equivalent reformulation of $g(x)\in\mathbb{R}_{-}^m$. By the expression of $Q_i$, we have $Q_i=B_i^{\top}B_i$ with $B_i=D_i^{1/2}Y_i$ being nonsingular. Then, with $h_i=(B_i^{-1})^{\top}b_i$ and $d_i^2=\|h_i\|^2-c_i$, the constraint functions $g_i$ are reformulated as 
\[
g_i(x)=\|B_ix+h_i\|^2-x^{\top}P_ix-d_i^2\le 0\quad{\rm for}\ i\in[m].
\]

Let $x^0\in\mathbb{R}^n$ be a vector generated randomly. To ensure that $x^0\in\Gamma$, for each $i\in[m]$, we randomly choose the components of $h_i$ from $(-1,1)$ and $s_i$ from $[0,1)$, and then set $d_i^2=\|B_ix^0+h_i\|^2-x^{\top}P_ix+s_i$. Clearly, $\|B_ix^0+h_i\|^2-x^{\top}P_ix-d_i^2\le 0$ due to the nonnegativity of $s_i$, and consequently, $x^0\in\Gamma$. For each $i\in[m]$, after generating $h_i$ and $d_i$, we set $b_i=B_i^{\top}h_i$ and $c_i=\|h_i\|^2\!-d_i^2$. 
\subsection{Validation of theoretical results}\label{sec5.3}

By Lemma \ref{lemma-welldef} (ii), when $x^k$ is a non-stationary point of \eqref{prob}, the inner loop of Algorithm \ref{iMBA} necessarily stops within a finite number of steps. Figure \ref{fig1} plots the number of steps required by the inner loop at each iteration. We see that the number of steps for the inner loop is basically not more than $\textbf{3}$, and attains $\textbf{3}$ only at several iterations of the outer loop. This validates the conclusion of Lemma \ref{lemma-welldef} (ii). 
\begin{figure}[h]
\centering
\subfigure[\label{fig1a}]{\includegraphics[scale=0.43]{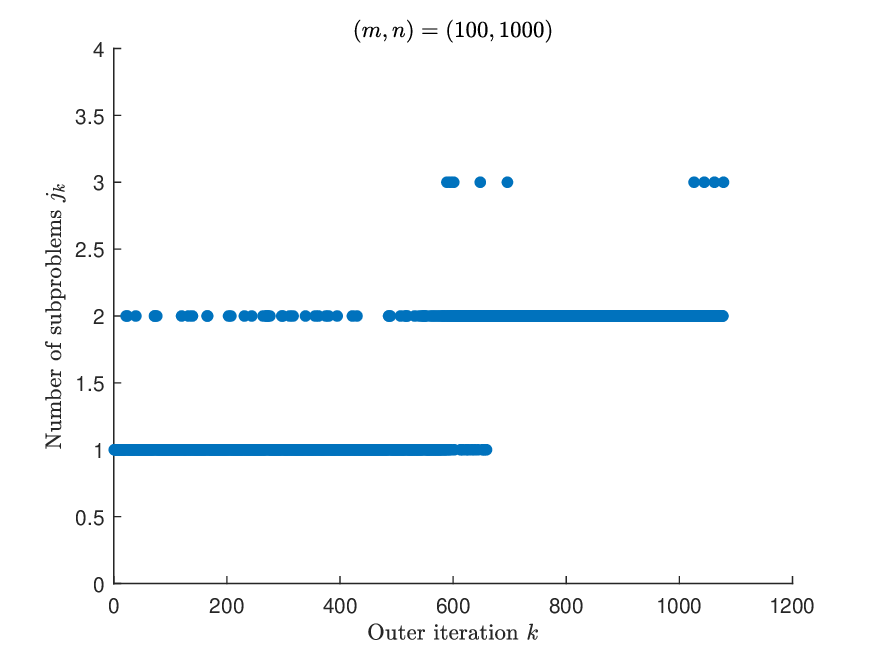}}
\subfigure[\label{fig1b}]{\includegraphics[scale=0.43]{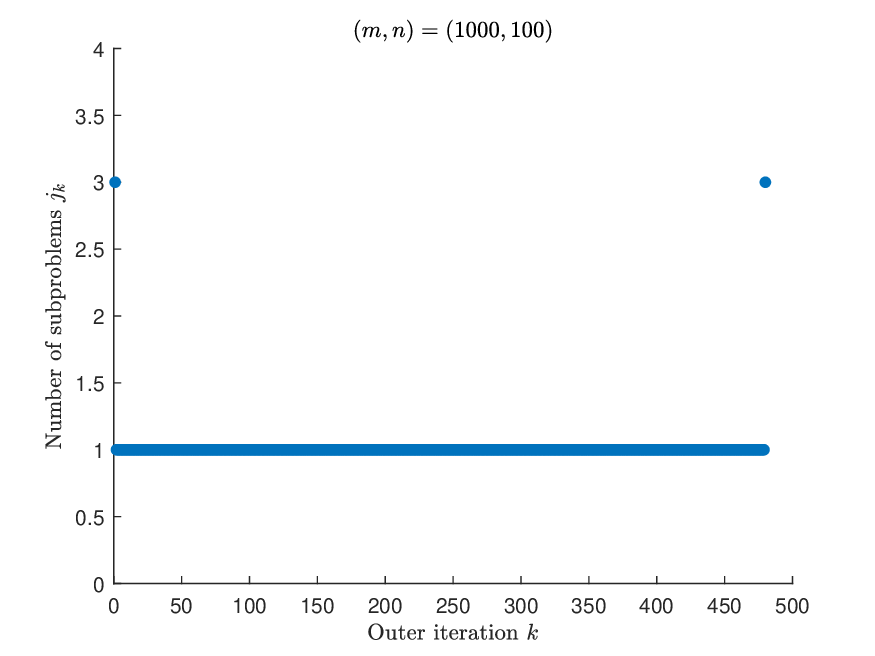}} 
\setlength{\abovecaptionskip}{2pt}
\setlength{\belowcaptionskip}{2pt}
\caption{Performance of the inner loop of Algorithm \ref{iMBA} for $f_0$ from \eqref{f0-type1} with $\omega_0=10^3$}
\label{fig1}
\end{figure}

Figure \ref{fig_rate} plots the convergence behavior of the iterate sequence of Algorithm \ref{iMBA}. By Theorem \ref{globalconv}, the iterate sequence is convergent under Assumptions \ref{ass2}-\ref{ass4} and the KL property of $\Phi_{\widetilde{c}}$. The function $\Phi_{\widetilde{c}}$ corresponding to $F$ with $g_0,\phi$ and $g$ from Section \ref{sec5.2} is semialgebraic, so is a KL function of exponent $q$ for some $q\in(0,1)$. Notice that Assumption \ref{ass3} does not necessarily hold. While Assumption \ref{ass4} automatically holds if the constraint $g(x)\in\mathbb{R}_{-}^m$ satisfies the MFCQ at every feasible point. Since it is highly possible for the latter to hold when $n\approx m$ or $n\gg m$, the iterate sequences of Algorithm \ref{iMBA} for those problems with $n\approx m$ or $n\gg m$ will have better convergence. The curves of Figure \ref{fig_rate} indeed validate this result, and also demonstrate the linear convergence of the iterate sequences. Note that  $\Phi(z)=\psi(x,T_0(x,s,\xi),T(x,s,V,L))$ with $\psi$ being a piecewise linear-quadratic function for this group of test instances. Such $\psi$ satisfies the requirement of Proposition \ref{prop-KLexp}. Thus, the linear convergence behavior of the curves in Figure \ref{fig_rate} means that the condition \eqref{cond-KL} also holds for  this group of test instances.     
\begin{figure}[h]
\centering
\subfigure[\label{fig1a}]{\includegraphics[scale=0.44]{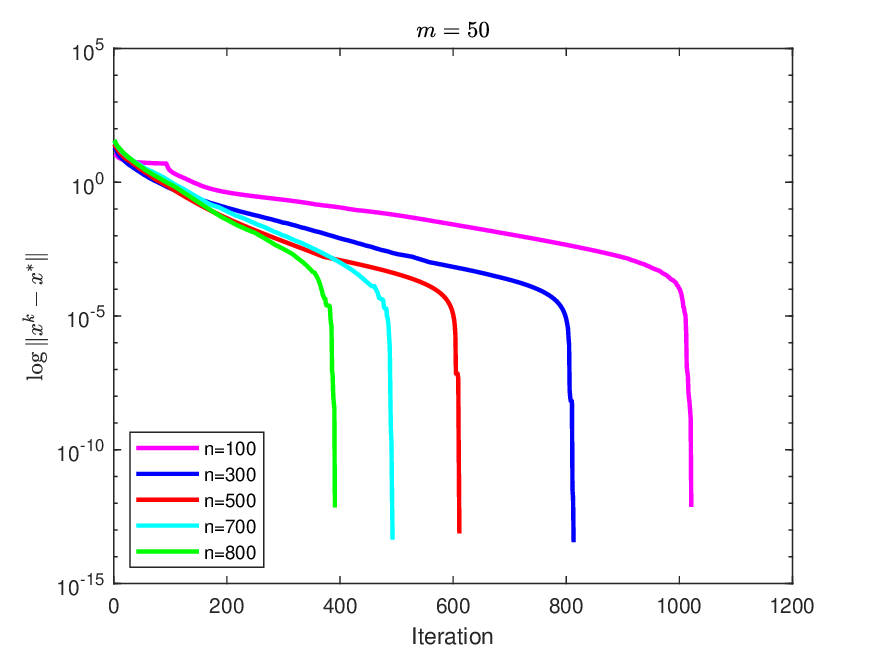}}
\subfigure[\label{fig1b}]{\includegraphics[scale=0.43]{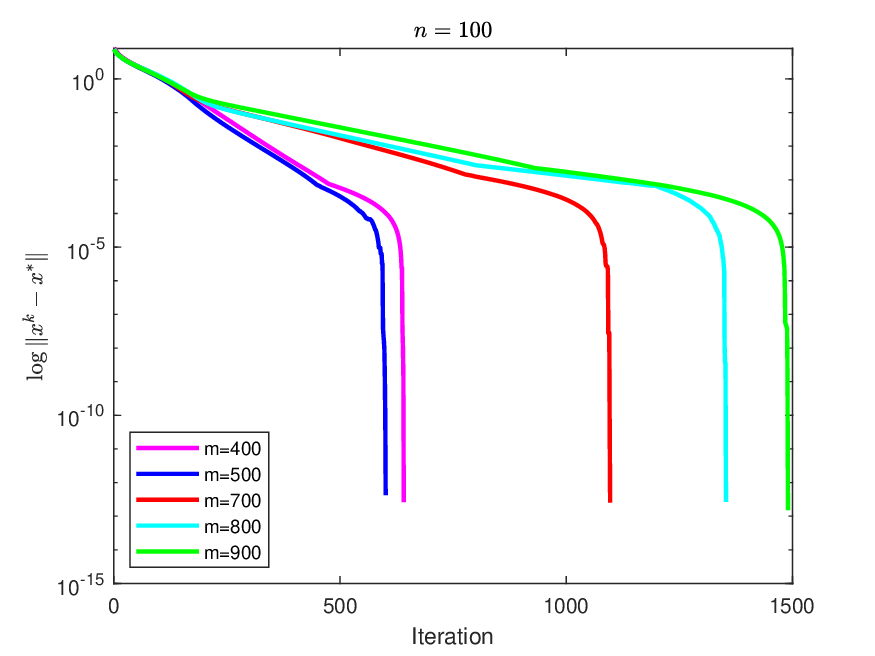}}
\setlength{\abovecaptionskip}{2pt}
\setlength{\belowcaptionskip}{2pt}
\caption{Convergence behavior of the iterate sequence for $f_0$ from \eqref{f0-type1} with $\omega^0=10^4$}
\label{fig_rate}
\end{figure}
\subsection{Numerical comparisons for QDCC problems}\label{sec5.4}

\subsubsection{$\ell_1\!-\!\ell_2$ regularized quadratic objective functions}\label{sec5.4.1}

We test the performance of iMBA-pgls for solving QDCC problems with $\ell_1-\ell_2$ regularized quadratic objective functions. The DC regularizer aims at testing the efficiency of iMBA-pgls rather than seeking a sparse solution. For each $k,j\in\mathbb{N}$, we choose the linear mapping $\mathcal{A}_{k,j}\!:\mathbb{R}^n\to\mathbb{R}^p$ to be $\mathcal{A}_{k,j}x=Y_0x$ for $x\in\mathbb{R}^n$, so the corresponding linear operator $\mathcal{Q}_{k,j}$ has the expression $\mathcal{Q}_{k,j}x=\mu_{k,j}x+Y_0^{\top}Y_0x$ for $x\in\mathbb{R}^n$. 

We compare the performance of iMBA-pgls with that of Algorithm \ref{DCA} above, a DCA for solving the QDCC problems in Section \ref{sec5.2}. For Algorithm \ref{DCA}, there is no full convergence guarantee for its iterate sequence, and we use it just for numerical comparisons. We apply MOSEK to solve the quadratic conic reformulations of the subproblems of Algorithm \ref{DCA}, and now Algorithm \ref{DCA} is called DCA-MOSEK. During the tests, we adopt the default setting of MOSEK and terminate DCA-MOSEK at the $k$th iteration once $\|x^k-x^{k-1}\|\le\epsilon$. For a fair comparison, the two solvers starts from the same feasible point $x^0$.
\begin{algorithm}
\renewcommand{\thealgorithm}{2}
\caption{\label{DCA}{\bf (DCA for solving problem \eqref{prob})}}
\textbf{Input:}\ $x^0\in\Gamma$.
	
\noindent
\textbf{For} {$k=0,1,2,\ldots$}  \textbf{do}
	
	Choose $\xi^k\in\partial h_0(x^k)$;
	
	Seek an optimal solution $x^{k+1}$ of the convex quadratic program
	\begin{align*}
		&\min_{x\in\mathbb{R}^n} f_0(x)+\phi(x)-\langle\xi^k,x\rangle-h_0(x^k)\\
		&\ \ {\rm s.t.}\ x^{\top}Q_ix-2\langle P_ix^k,x-x^k\rangle+2b_i^{\top}x+c_i-(x^k)^{\top}P_ix^k\le 0\quad\forall i\in[m].
	\end{align*}
	\textbf{End}
\end{algorithm}

Table \ref{L12-QCQPs} reports the results of iMBA-pgls and DCA-MOSEK with $\epsilon=10^{-5}$ and $\epsilon_1=10^{-7}$. Since MOSEK solves the quadratic conic reformulations of the subproblems with an interior point solver, DCA-MOSEK starting from $x^0\in\Gamma$ always returns a feasible solution to this class of problems. We see that for this class of QDCC problems, DCA-MOSEK requires less iterations for the test instances with $\omega_0=10^4$ but more running time when $n$ is fixed. iMBA-pgls produces better objective values than DCA-MOSEK for all test instances, and requires much less running time than the latter for $n\ge 1000$ when $m$ is fixed, and for $m\ge 1000$ when $n$ is fixed. This demonstrates the advantage of iMBA-pgls for solving this class of QDCC problems with a larger $m$ or $n$.    
\begin{table}[h]
\small
\centering
\setlength{\tabcolsep}{4.0pt}
\setlength{\belowcaptionskip}{2pt}
\caption{\small Numerical results for $\ell_1\!-\!\ell_2$ regularized quadratic cost functions}
\label{L12-QCQPs}
\begin{tabular}{@{\extracolsep{\fill}}cccccccccc@{\extracolsep{\fill}}}
\hline
\multicolumn{1}{c}{}&\multicolumn{1}{c}{}& \multicolumn{1}{c}{}  & \multicolumn{4}{c}{iMBA-pgls} & \multicolumn{3}{c}{DCA-MOSEK} \\
		\hline
		$\omega_0$ &$n$ & $m$ &  iter & Fval &  time(s) & compl &  iter & Fval & time(s) \\
		\hline
		\multirow{10}{*}{$10^4$} & 100 &\multirow{5}{*}{100} 
		& 550.0 & 	  {\bf-1.5763e+5} &	   6.34  &	  1.0389e-5
		& 3.0  	&  -1.5758e+5 	  & 3.61\\ 
		& 200 &\multirow{5}{*}{} 
		& 526.0 & 	  {\bf-2.3142e+5} &	   5.61  &	  1.4801e-5 
		& 6.0  	&  -2.3055e+5 	  & 15.38\\
		& 500 & \multirow{5}{*}{} 
		& 325.0 & 	  {\bf-3.6575e+5} &	   5.63  &	  5.6332e-4 
		& 3.0  	&  -3.5813e+5 	  & 63.71\\
		& 1000 &\multirow{5}{*}{} 
		& 478.0 & 	  {\bf-4.8139e+5} &	  228.42  &	  5.0061e-6  
		& - & -&-\\
		& 2000 &\multirow{5}{*}{} 
		& 405.0  &	  {\bf-6.5331e+5} &	  106.41  &	  1.7908e-6
		& - & - & - \\ 
		\cline{2-10}
		& \multirow{5}{*}{100} & 200 
		& 558.0 & 	  {\bf-1.4415e+5} &	   7.88  &	  6.2327e-6
		& 3.0  	&  -1.4410e+5 	  & 7.90\\
		& \multirow{5}{*}{} &500 
		& 1095.0 & 	  {\bf-1.3054e+5} &	  39.26  &	  9.6592e-6
		& 3.0  	 & -1.3049e+5 	  & 30.81\\
		& \multirow{5}{*}{} &1000 
		& 1130.0  &	  {\bf-1.2770e+5} &	  55.17  &	  1.2426e-5
		& 4.0  	  &-1.2765e+5 	  & 106.89\\
		& \multirow{5}{*}{} &2000 
		& 1003.0  &	  {\bf-1.2441e+5} &	  108.43  &	  7.1233e-6
		& 4.0  	  &-1.2436e+5 	  & 351.72\\ 
		& \multirow{5}{*}{} &3000 
		& 930.0  &	  {\bf-1.2340e+5} &	  218.66  &	  6.6391e-6
		& 4.0  	 & -1.2336e+5 	  & 737.58\\
		\hline
		\multirow{10}{*}{10} & 100 &\multirow{5}{*}{100} 
		& 1275.0 & 	  {\bf -1.0960e+2} 	&   2.89  &	  9.0792e-8
		& 4.0  	 & -1.0837e+2 	&   4.40\\
		& 200 &\multirow{5}{*}{ } 
		& 3068.0 & 	  {\bf -1.6670e+2} 	&  13.94  &	  9.7689e-8
		& 4.0  	 & -1.6627e+2 	&  11.21\\
		& 500 &\multirow{5}{*}{ } 
		& 9153.0 & 	  {\bf-2.8080e+2} 	&  127.49  &	  9.7682e-8
		& 4.0  	 & -2.8078e+2 	&  80.87\\
		& 1000 &\multirow{5}{*}{ } 
		& 10000.0 &	  {\bf-3.9194e+2} 	&  491.83  	&  8.9557e-6
		& - & - & -\\
		& 2000 &\multirow{5}{*}{ } 
		& 500.0  &	  {\bf -2.5515e+2} 	& 134.51  &  0
		& - & - & -\\
		\cline{2-10}
		& \multirow{5}{*}{100} & 200 
		& 1255.0 & 	  {\bf-1.0171e+2} &	   6.02  &	  7.6043e-8
		& 3.0  	 & -1.0052e+2 	  & 8.37\\
		& \multirow{5}{*}{} & 500 
		& 1272.0 & 	  {\bf-8.6065e+1} &	   8.05  &	  9.0931e-8
		& 4.0  	 & -8.4153e+1 	  & 35.14\\
		& \multirow{5}{*}{} & 1000 
		& 1130.0 & 	  {\bf-8.5009e+1} &	  13.32  &	  9.4000e-8
		& 4.0  	 & -8.3492e+1 	  & 107.11\\
		& \multirow{5}{*}{} & 2000
		& 1693.0 & 	  {\bf -7.8086e+1} &  44.49  &	  8.7478e-8
		& 4.0  	 & -7.6325e+1 	  & 356.70\\
		& \multirow{5}{*}{} & 3000 
		& 1699.0 & 	  {\bf-7.7462e+1} &  52.38  &	  5.4841e-8
		& 6.0  	 & -7.5910e+1 	  & 805.80\\
		\hline
	\end{tabular}
\end{table}
\subsubsection{$\ell_1\!-\!\ell_2$ regularized Student's $t$-regression cost functions}

We test the performance of iMBA-pgls for solving QDCC problems with $\ell_1-\ell_2$ regularized Student's $t$-regression objective functions. Note that $\nabla^2f_0(x)=A^{\top}\nabla^2\theta(Ax-b)A$ for $x\in\mathbb{R}^n$. When testing this group of instances, for each $k,j\in\mathbb{N}$, we take $\mathcal{A}_{k,j}={\rm diag}([\omega^k]_{+}^{1/2})A$, where $\omega^k\in\mathbb{R}^N$ is the diagonal vector of the diagonal matrix $\nabla^2\theta(Ax^k\!-\!b)$. Since MOSEK can be employed to solve the quadratic conic reformulation of \eqref{subprobkj}, we compare the performance of iMBA-pgls with that of Algorithm \ref{iMBA} armed with MOSEK to solve its subproblems (iMBA-MOSEK). When applying MOSEK to solve the quadratic conic reformulation of \eqref{subprobkj}, we set the stop tolerances for the dual feasibility, primal feasibility and relative gap of its interior-point solver to be $10^{-5}$, and the other parameters are all set to be the default ones. For the fairness of comparison, iMBA-MOSEK starts from the same feasible point $x^0$, and stops under the same condition as for iMBA-pgls. 

Table \ref{table_dct} reports the results of iMBA-pgls and iMBA-MOSEK with $\epsilon=10^{-5}$ and $\epsilon_1=10^{-7}$. Observe that iMBA-MOSEK returns better objective values for more test examples. This indicates that, when $f_0$ is highly nonlinear, a powerful solver to the subproblems of Algorithm \ref{iMBA} will improve its performance. Unfortunately, the running time of iMBA-MOSEK is much more than that of iMBA-pgls, and the former is at least \textbf{7} times the latter when $m$ is fixed to be $100$. Thus, when $f_0$ is highly nonlinear, seeking a more robust and faster solver than PGls to the subproblems is still requisite.   
\begin{table}[h]
\small
\centering
\setlength{\tabcolsep}{2.0pt}
\setlength{\belowcaptionskip}{2.0pt}
\caption{\small Numerical results for $\ell_1\!-\!\ell_2$ regularized Student's $t$-regressions cost functions}
\label{table_dct}
\begin{tabular}{@{\extracolsep{\fill}}cccccccccc@{\extracolsep{\fill}}}
\hline
\multicolumn{1}{c}{}& \multicolumn{1}{c}{}  & \multicolumn{4}{c}{iMBA-pgls} & \multicolumn{4}{c}{iMBA-MOSEK} \\
\hline
 $n$ & $m$ &  iter & Fval &  time(s) & compl &  iter & Fval & time(s) & compl\\
		\hline
		  300 &\multirow{4}{*}{50} 
		& 168  &	  4.6455e+2 	&  6.76  &	  4.3972e-10
		& 500  &	  {\bf4.6409e+2} 	&  30.48  &	  3.9712e-9\\
		  500 &\multirow{4}{*}{}
		& 960  &	  {\bf6.8314e+2} 	&  9.73  &	  3.3109e-8
		& 500  &	  6.8379e+2 	&  58.64  &	  3.8367e-8\\
		800 &\multirow{4}{*}{}
		& 519  &	  1.0528e+3 	&  9.66  &	  1.9724e-8
		& 3000 &	  {\bf1.0522e+3} 	&  597.96 &	  9.4486e-7\\
		1000 &\multirow{4}{*}{}
		& 818  &	  1.3984e+3 	&  21.31  &	  9.4625e-8
		& 3000 &	  {\bf1.3977e+3} 	&  769.32 &	  3.3973e-7\\ \cline{1-10}
		  300 &\multirow{4}{*}{100} 
		& 121  &	  4.6471e+2 	&  6.29  &	  1.2036e-10
		& 500  &	  {\bf4.6420e+2} 	&  59.00 & 	  1.5984e-9	\\ 
		500 &\multirow{4}{*}{}
		& 822  &	  6.8489e+2 	&  12.80  &	  7.2792e-8
		& 500  &	  {\bf6.8427e+2} 	&  734.79  &  6.9279e-9	\\
		  800 &\multirow{4}{*}{}
		& 556  &	  1.0580e+3 	&  17.42  &	  1.8459e-8
		& 530  &	  {\bf1.0575e+3} 	&  1198.25 & 	  9.8133e-8	\\
		  1000 &\multirow{4}{*}{}
		& 547  &	  1.4043e+3 	&  26.17  &	  6.3158e-10
		& 894  &	  {\bf1.4036e+3} 	&  2689.58 & 	  9.9363e-8\\
		\hline
        \multirow{3}{*}{800} & 200 & 688 &  1.0589e+3 & 44.36 & 3.6767e-8 & - & - & - & -\\
        \multirow{3}{*}{}    & 400 & 625 &  1.0626e+3 & 76.41 & 9.2748e-8 & - & - & - & -\\
        \multirow{3}{*}{}    & 600 & 668 &  1.0630e+3 & 120.98 & 5.8394e-8 & - & - & - & -\\
        \hline
	\end{tabular}
\end{table}
\section{Conclusion}\label{sec6.0}

We proposed an inexact MBA method with a practical inexactness criterion for the constrained upper-$\mathcal{C}^2$ problem \eqref{prob}, thereby solving the dilemma that the existing MBA methods focus on the constrained case with the $\mathcal{C}^{1+}$ constraint functions. For the proposed inexact MBA method, we conducted the convergence analysis under the MSCQ and the BMP, and established the full convergence of the iterate sequence under the KL property of $\Phi_{\widetilde{c}}$, and the convergence rate on the iterate and objective value sequences under the KL property of $\Phi$ with exponent $q\in[1/2,1)$. As far as we know, this is the first full convergence result of the MBA methods on the iterate sequence without requiring the MFCQ. Among others, the error bound in Proposition \ref{prop-eboundk} and the construction of $\Phi_{\widetilde{c}}$ play a crucial role. In addition, a checkable condition, weaker than the one in \cite[Theorem 3.2]{LiPong18}, was provided to identify whether $\Phi$ satisfies the KL property of exponent $q\in[1/2,1)$ on the given critical point. 

The implementation of the inexact MBA method was discussed in Section \ref{sec5}. The archive version of this paper includes the numerical comparisons with the software MOSEK on several classes of test problems, which demonstrate the superiority of iMBA-pgls in the quality of solutions and running time.

\section*{Acknowledgments}
The authors thank Prof. Mordukhovich from Wayne State University and Prof. Jin Zhang from Southern University of Science and Technology for their helpful comment on the property of upper-$\mathcal{C}^2$ functions in Lemma \ref{lemma-major} (ii).

\bibliographystyle{siam}
\bibliography{references}

\end{document}